\documentclass[12pt,reqno,a4paper]{amsart}
\usepackage{
    amsmath,  amsfonts, amssymb,  amsthm,   amscd,
    gensymb,  graphicx, comment,  etoolbox, url,
    booktabs, stackrel, mathtools,enumitem, mathdots,  microtype, lmodern,    mathrsfs, graphicx, tikz,  longtable,tabularx, float, tikz, pst-node, tikz-cd, multirow, tabularx, amscd,  bm, array, makecell, diagbox, booktabs,ragged2e, caption, subcaption }
\usepackage{makecell,slashbox}
\usepackage{xcolor}
\usepackage[all]{xy}
\usepackage{graphicx}
\usepackage[pagebackref=true, colorlinks=true, linkcolor=blue, citecolor=blue, urlcolor=blue, breaklinks=true]{hyperref}

\usepackage[utf8]{inputenc}
\usepackage{microtype, fullpage, wrapfig,textcomp,mathrsfs,csquotes,fbb}
\usepackage[colorlinks=true, linkcolor=blue, citecolor=blue, urlcolor=blue, breaklinks=true]{hyperref}
\usepackage[capitalise]{cleveref}
\usepackage{todonotes}
\usetikzlibrary{positioning}
\usetikzlibrary{shapes,arrows.meta,calc}
\usetikzlibrary{arrows}

\newtheorem{theorem}{Theorem}[section]

\newtheorem{corollary}[theorem] {Corollary}
\newtheorem{definition}[theorem]{Definition}
\newtheorem{example}[theorem]{Example}
\newtheorem{lemma}[theorem]{Lemma}

\newtheorem{proposition}[theorem]{Proposition}
\newtheorem{remark}[theorem]{Remark}

\newtheorem*{theorem*}{Theorem}
\newtheorem*{proposition*}{Proposition}

\newcommand\Q{\mathbb{Q}}
\newcommand\R{\mathbb{R}}
\newcommand\Z{\mathbb{Z}}
\newcommand\C{\mathbb{C}}

\newcommand{\TC}{\mathrm{TC}}
\newcommand{\D}{D}

\newcommand{\ct}{\mathrm{cat}}

\newcommand{\sct}{\mathrm{secat}}

\DeclareMathOperator{\bary}{bar}
\DeclareMathOperator{\supp}{supp}
\DeclareMathOperator{\dsecat}{dsecat}
\DeclareMathOperator{\dcat}{dcat}
\newcommand{\dD}{\operatorname{dD}}

\newcommand{\SP}{\mathrm{SP}}
\newcommand{\Sig}{\Sigma}

\newcommand{\dTC}{\operatorname{dTC}}
\newcommand{\cl}{\operatorname{cl}}
\newcommand{\id}{\mathrm{id}}
\newcommand{\diag}{\Delta}
\newcommand{\sd}{\delta}
\newcommand{\pr}{\pi}
\newcommand{\iso}{\xrightarrow{\ \cong\ }}
\DeclareMathOperator{\im}{im}
\newcolumntype{x}[1]{>{\centering\arraybackslash}p{#1}}

\begin{document}
\title[]{New Bounds on Distributional Sectional Category and Applications to Distributional Homotopic Distance}

\author[N. Daundkar]{Navnath Daundkar}
\address{Department of Mathematics, Indian Institute of Technology Madras, Chennai, India.}
\email{navnath@iitm.ac.in}
\author[]{J.M. Garc\'ia-Calcines}
\address{Departamento de Matem\'aticas, Estad\'istica e Investigaci\'on Operativa, Universidad de La Laguna, Avenida Astrof\'isico Francisco S\'anchez S/N, 38200 La Laguna, Spain.}
\email{jmgarcal@ull.edu.es}

\thanks{}

\begin{abstract} 
In this paper, we establish several new bounds for the distributional sectional category ($\dsecat$). We first prove Jauhari's conjecture, thereby establishing a cohomological lower bound for $\dsecat$ with arbitrary coefficients. We then obtain an analogous lower bound with rational coefficients by constructing a natural splitting of the map induced on cohomology by the diagonal inclusion into symmetric powers. As applications, we provide several computations of the distributional sectional category. We also establish multiplicative product and composition inequalities for $\dsecat$. Finally, we apply these results to the distributional homotopic distance recently introduced by Jauhari and Oprea, giving an equivalent formulation in terms of distributed homotopies and deriving new fibration and multiplicative triangle inequalities.
\end{abstract}

\keywords{homotopic distance,  distributional Lusternik-Schnirelmann category, distributional topological complexity, distributional sectional category, }
\subjclass[2020]{55M30, 55P45, 55S40, 55R10, 55S15}
\maketitle

\tableofcontents

\section{Introduction}
The sectional category of a map, introduced by Schwarz for fibrations \cite{Sva} and was further developed by Berstein and Ganea in~\cite{B-G} to arbitrary maps, and subsequently by James in~\cite{James} provides a common framework for several important numerical homotopy invariants.
Classical invariants such as the Lusternik--Schnirelmann category \cite{L-S-cat} and Farber's topological complexity \cite{F} arise naturally as special cases of sectional category.  A distributional (or probabilistic) refinement of sectional category has recently been developed independently by Dranishnikov and Jauhari \cite{D-J,Jau1} and by Knudsen and Weinberger \cite{K-W}. This refinement replaces strict local sections by finitely supported probability measures on the fibers of a fibration, equipped with the L\'evy--Prokhorov metric. It gives rise to distributional versions of the Lusternik--Schnirelmann category, topological complexity, and their sequential analogues, yielding sharper lower bounds and detecting phenomena that remain invisible to the classical theory.

As with any new homotopy invariant, one of the fundamental challenges is to develop effective tools for its computation. In particular, cohomological lower bounds have played a central role in the study of sectional category and its variants; see, for example, \cite{B-G, F, James}. Dranishnikov and Jauhari established cohomological lower bounds for the distributional Lusternik--Schnirelmann category and the distributional topological complexity in \cite{D-J}, and further conjectured in \cite[Conjecture~5.9]{Jau1} that these arise as special cases of a more general cohomological lower bound for the distributional sectional category. Another important direction is to understand the behavior of $\dsecat$ under standard topological constructions, such as products, compositions, and fibrations, analogous to the corresponding results for the classical sectional category.

The primary objective of this paper is to develop the homotopy-theoretic foundations of the distributional sectional category. We first prove Jauhari's conjecture by establishing a cohomological lower bound for $\dsecat$ with arbitrary coefficients. We then obtain a stronger rational version by constructing a natural splitting of the homomorphism induced on cohomology by the diagonal inclusion into symmetric powers. As applications, we recover the previously known cohomological lower bounds for the distributional Lusternik--Schnirelmann category and the distributional topological complexity obtained in \cite{D-J}, and provide several new computations. We further establish multiplicative product and composition inequalities for $\dsecat$.

Finally, following the philosophy of relating sectional category to numerical invariants of maps, we study the notion of \emph{distributional homotopic distance}. While a sequential version of this invariant was recently introduced independently by Jauhari and Oprea \cite{O-J} via a pullback construction, our approach is based directly on distributed homotopies and yields the pullback characterization as a consequence. 
This intrinsic formulation enables the structural results established for $\dsecat$ to be applied systematically, leading to several fundamental properties of distributional homotopic distance, including the fibration inequality and the multiplicative triangle inequality. Furthermore, it provides cohomological lower bounds for $\dD(f,g)$.

The paper is organized as follows. Throughout, $\dsecat(p)$ denotes the distributional sectional category of a fibration $p\colon E\to B$, and $\dD(f,g)$ denotes the distributional homotopic distance between maps $f,g\colon X\to Y$. We write $\SP^m(X)$ for the $m$-fold symmetric product of a space $X$, $\delta_m^X\colon X\to \SP^m(X)$ for the diagonal inclusion map, and $\cl_{\Q}(-)$ for the cup-length with rational coeffcients.

In Section \ref{sec: prelim}, we recall the necessary preliminaries related to $\dsecat$ and fix notations. Section~\ref{sec:lower bound in arbitrary coefficients} establishes a cohomological lower bound for $\dsecat$ with arbitrary coefficients. This result recovers the corresponding cohomological lower bounds for the distributional Lusternik--Schnirelmann category and the distributional topological complexity established by Dranishnikov and Jauhari in \cite{D-J}. In particular, Theorem~\ref{cohomological lower bound on dsecat} proves the following result.

\begin{theorem*}
Let $p\colon E\to B$ be a fibration. Suppose $\beta_i\in H^*(\SP^{n!}(B);R)$ satisfy $SP^{n!}(p)^*(\beta_i)=0$. Let $\alpha_i=(\delta_{n!}^B)^*(\beta_i)$ and assume that $\alpha_1\smile \dots \smile \alpha_n\neq 0$. Then
$$\dsecat(p)\geq n.$$
\end{theorem*}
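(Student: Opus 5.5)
We argue by contradiction: assume $\dsecat(p)\le n-1$ and show that $\alpha_1\smile\dots\smile\alpha_n=0$. The plan has three steps: turn a distributional section into a cover of $B$ by $n$ open sets, show each $\alpha_i$ vanishes on one of those sets, and conclude by the usual cup-length argument.

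\textbf{Step 1: a cover of $B$ from a distributional section.} Recall the definition from Section~\ref{sec: prelim}: $\dsecat(p)\le n-1$ means there is a continuous map $s\colon B\to \mathcal{B}_n(E)$. It sends each $b$ to a probability measure with at most $n$ support points, all lying in the fiber $p^{-1}(b)$. From $s$ I would build an open cover $U_1,\dots,U_n$ of $B$ indexed so that $U_k$ is the set of $b$ where $s(b)$ has at least $k$ distinct support points; more precisely, one can use the sets where the $k$ largest weights exceed suitable thresholds.

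On $U_k$ the support of $s(b)$ can be counted with multiplicities that add up to $n!$. Concretely, if the support has exactly $j$ points, repeat each point $n!/j$ times. This gives a map into $\SP^{n!}(E)$, and it is exactly here that $n!$ is needed: it is the least common multiple of all possible support sizes $j\le n$. The result is a map $\sigma_k\colon U_k\to \SP^{n!}(E)$ with $\SP^{n!}(p)\circ\sigma_k=\delta^B_{n!}|_{U_k}$.

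\textbf{Main obstacle: continuity of $\sigma_k$.} The uniform-multiplicity assignment jumps wherever the support size changes, that is, where a weight tends to $0$ or two support points collide. So the cover has to be chosen with care; I would try the following.
\begin{itemize}
\item Take a stratification by exact support size $j$, and thicken each stratum $\{|\supp s(b)|=j\}$ to an open neighbourhood $V_j$ on which the $j$ heaviest points remain separated and dominant.
\item On $V_j$, send $b$ to the $j$ heaviest points, each with multiplicity $n!/j$.
\end{itemize}
This is continuous because on $V_j$ the $j$ heaviest points vary continuously in the Lévy--Prokhorov topology. Only $n$ such sets are needed, since $j$ ranges over $1,\dots,n$. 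If this turns out too delicate, the fallback is the homotopy-theoretic description of $\dsecat$ from Section~\ref{sec: prelim}, if available: a lift of $\delta^B_{n!}$ up to homotopy through the $n$-fold fiberwise-join-type construction.

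\textbf{Step 2: each $\alpha_i$ vanishes on its set.} On $U_i$ we have
\[
\alpha_i|_{U_i}=(\delta^B_{n!}|_{U_i})^*(\beta_i)=\sigma_i^*\,\SP^{n!}(p)^*(\beta_i)=0 .
\]
So each $\alpha_i$ lifts to a relative class $\bar\alpha_i\in H^*(B,U_i;R)$.

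\textbf{Step 3: cup-length conclusion.} The relative cup product gives
\[
\bar\alpha_1\smile\cdots\smile\bar\alpha_n\in H^*\Bigl(B,\textstyle\bigcup_i U_i;R\Bigr)=H^*(B,B;R)=0 .
\]
Restricting to $H^*(B;R)$ yields $\alpha_1\smile\cdots\smile\alpha_n=0$, contradicting the hypothesis. Hence $\dsecat(p)\ge n$.
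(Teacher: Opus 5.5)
Steps 2 and 3 are the standard argument and are fine for an open cover. The proof breaks in Step 1, the step you flag as the main obstacle, and your proposed fix does not work.

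\textbf{The gap.} The rule ``on an open thickening $V_j$ of the stratum $\{b:|\supp s(b)|=j\}$, send $b$ to the $j$ heaviest points of $s(b)$'' is not continuous in general. Atom weights are not continuous in the L\'evy--Prokhorov topology: near a measure in the stratum there can be measures in which one atom has split into two nearby atoms of comparable weight, so the ranking by weight can flip. Concretely, let $p\colon D^2\times\R^2\to D^2$ be the trivial fibration and $n=3$. In polar coordinates $b=(r,\theta)$ put
\[
s(b)=\tfrac12\delta_x+\bigl(\tfrac14+\tfrac18\sin\theta\bigr)\delta_y+\bigl(\tfrac14-\tfrac18\sin\theta\bigr)\delta_{y+re},
\]
with atoms in the fibre over $b$, $x\neq y$, and $e$ a unit vector. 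This $s$ is continuous, since at $r=0$ the last two atoms merge and the dependence on $\theta$ disappears. The stratum $j=2$ is $\{0\}$. On every open $V_2\ni 0$, the second heaviest atom is $y$ for $\sin\theta>0$, is $y+re$ for $\sin\theta<0$, and is undefined on the horizontal axis. So your $\sigma_2$, namely $b\mapsto 3x+3y$ resp.\ $3x+3(y+re)$, jumps by $r>0$. In this example no open neighbourhood of the stratum has its two heaviest atoms ``separated and dominant'', so that condition cannot be imposed either.

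The fallback is also unavailable. The paper only has a map $*^k_BE\to E_k(p)$, which gives $\dsecat\le\sct$, and nothing in the reverse direction. A lift through the fibrewise join would bound $\sct(p)$, which can be far larger than $\dsecat(p)$, as in the $\mathbb{RP}^n$ example.

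\textbf{The missing idea: do not thicken.} The paper uses the exact strata $A_i=\{b:|\supp s(b)|=i\}$ and proves that $\sigma_i=r_{i,n!}\circ s|_{A_i}$ is continuous. Take $\eta$ smaller than the smallest weight of $\mu$ and than half the smallest distance between its atoms. If $\mu,\nu$ both have exactly $i$ support points and $d_{\mathrm{LP}}(\mu,\nu)<\eta$, then each of the $i$ disjoint $\eta$-balls about $\supp\mu$ contains exactly one atom of $\nu$, and $\nu$ has no other atoms. So no splitting, merging or ranking issue arises, and weights are never used.

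The price is that the $A_i$ are neither open nor closed. Singular cohomology with an open cover must therefore be replaced by Alexander--Spanier cohomology. There the exact sequence of each pair $(B,A_i)$ (or $(\SP^{n!}(B),A_i)$, as in the paper) and the relative cup product into $H^*(B,\bigcup_i A_i)=0$ are available. With that change your Steps 2 and 3 go through.

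(Minor: $n!$ is a common multiple of $1,\dots,n$, not the least one, but any common multiple works.)
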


In Section \ref{sec: cohomological lower bound with rational coeff}, we refine this bound in the rational setting by exploiting structural properties of symmetric products. A key tool is a natural splitting in cohomology with rational coefficients. 

\begin{theorem*}
For every $m\ge1$ and every space $X$ of locally finite simplicial homotopy type, there exists a natural graded $\Q$-linear map
$s^X_m\colon H^*(X;\Q)\longrightarrow H^*(\SP^m(X);\Q)$
such that $(\delta^X_m)^*\circ s^X_m=\id$. Consequently, for every continuous map $p\colon E\to B$ between spaces of locally finite simplicial homotopy type,
\[
(\delta^B_m)^*\bigl(\ker\SP^m(p)^*\bigr)=\ker p^*,
\]
and the restriction $(\delta^B_m)^*\colon\ker\SP^m(p)^*\to\ker p^*$ admits a section given by $s^B_m|_{\ker p^*}$. If moreover $p$ is a fibration between path-connected spaces of locally finite CW type, then
\[
\dsecat(p)\ge \cl_\Q(\ker p^*).
\]
\end{theorem*}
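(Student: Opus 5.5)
The plan is to build $s^X_m$ from the rational homology of symmetric products via the transfer. For $X$ of locally finite simplicial homotopy type, $\SP^m(X)=X^m/\Sigma_m$, and for a finite group quotient the projection $\pi\colon X^m\to \SP^m(X)$ induces an isomorphism
\[
H^*(\SP^m(X);\Q)\cong H^*(X^m;\Q)^{\Sigma_m}.
\]
This is the classical transfer argument for finite group actions on simplicial complexes with rational coefficients, applied to a simplicial model, and it is natural in $X$.

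Under this identification, I would define
\[
s^X_m(a)=\frac1m\sum_{i=1}^m \mathrm{pr}_i^*(a),
\]
which is a $\Sigma_m$-invariant class in $H^*(X^m;\Q)$. It is natural in $X$ and graded $\Q$-linear. The diagonal $\delta_m^X$ factors as $X\xrightarrow{\Delta}X^m\xrightarrow{\pi}\SP^m(X)$. Hence
\[
(\delta^X_m)^*s^X_m(a)=\Delta^*\frac1m\sum_i \mathrm{pr}_i^*a=\frac1m\cdot m\,a=a.
\]
The one point to check is that the element of $H^*(\SP^m X)$ corresponding to an invariant class $c$ pulls back under $\pi^*$ to $c$. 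This holds by construction of the isomorphism, since $\pi^*$ is the isomorphism onto the invariants.

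For the kernel statement, naturality gives $\SP^m(p)^*\circ s^B_m=s^E_m\circ p^*$, so $s^B_m$ maps $\ker p^*$ into $\ker\SP^m(p)^*$. Conversely, $\delta^E_m\circ p=\SP^m(p)\circ\delta^B_m$, so $(\delta^B_m)^*$ maps $\ker\SP^m(p)^*$ into $\ker p^*$. Combined with the section identity, this gives the equality and shows that $s^B_m|_{\ker p^*}$ is a section.

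For the final inequality, let $n=\cl_\Q(\ker p^*)$, realized by classes $\alpha_1,\dots,\alpha_n\in\ker p^*$ with $\alpha_1\smile\dots\smile\alpha_n\neq0$. Set $\beta_i=s^B_{n!}(\alpha_i)$. Then $\SP^{n!}(p)^*\beta_i=0$ and $(\delta^B_{n!})^*\beta_i=\alpha_i$, so Theorem~\ref{cohomological lower bound on dsecat} with $R=\Q$ gives $\dsecat(p)\ge n$. If the cup length is infinite, the same argument gives the bound for every $n$.

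The main obstacle I expect is the transfer isomorphism $H^*(X^m/\Sigma_m;\Q)\cong H^*(X^m;\Q)^{\Sigma_m}$ for non-compact, locally finite complexes, and its naturality for arbitrary continuous maps $p$ rather than only simplicial ones. I would first replace $X$ by a homotopy equivalent locally finite simplicial complex and use the simplicial structure on $X^m$ (after an ordered subdivision, so that $\Sigma_m$ acts simplicially). On this model the chain-level transfer $\tau(\sigma)=\sum_{g}g\tilde\sigma$ exists and satisfies $\pi_*\tau=m!$ and $\tau\pi_*=\sum_g g$. Naturality then follows because $\SP^m$ is a homotopy functor, so any map is homotopic to a simplicial approximation, and the identification commutes with $\pi^*$, which is natural. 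The hypothesis of locally finite CW type in the last clause is there precisely so that both $E$ and $B$ admit such models, and so that Theorem~\ref{cohomological lower bound on dsecat} applies.
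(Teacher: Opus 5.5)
Your proposal is correct and follows the paper's argument essentially step for step: the rational orbit isomorphism $q_X^*\colon H^*(\SP^m(X);\Q)\cong H^*(X^m;\Q)^{\Sigma_m}$ via the transfer on a simplicial model, the averaged class $\frac1m\sum_j\mathrm{pr}_j^*(u)$, the section identity from $\delta^X_m=q_X\circ\Delta_X$, naturality giving the kernel identity, and $m=n!$ fed into Theorem~\ref{cohomological lower bound on dsecat}. Two small remarks. First, you do not need simplicial approximation for naturality: it follows directly from the strict identity $q_Y\circ f^m=\SP^m(f)\circ q_X$ together with injectivity of $q_X^*$. Second, the paper makes explicit a step you only allude to: it replaces $p$ by a fibration over a locally finite CW model (a metric ANR) using homotopy invariance of $\dsecat$, so that the Alexander--Spanier bound can legitimately be applied to your singular classes.
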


Sections \ref{sec: multiplicative prod ineq} and \ref{sec: composition inequality} develop the multiplicative aspects of the theory. We prove the following product and composition inequalities for $\dsecat$ in Theorem~\ref{prop:product-inequality} and Theorem~\ref{prop: composition inequality}, respectively.

\begin{theorem*}
Let $p\colon E\to B$ and $p'\colon E'\to B'$ be fibrations, where $E$ and $E'$ are separable metric spaces. Then
\begin{equation}\label{eq: product inequality for dsecat}
 \dsecat(p\times p')
\leq
(\dsecat(p)+1)(\dsecat(p')+1)-1 .   
\end{equation}
\end{theorem*}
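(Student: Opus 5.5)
We use the definition of $\dsecat$: $\dsecat(p)\le n$ if and only if there is a continuous map $s\colon B\to \mathcal{B}_{n+1}(E)$ with $\supp s(b)\subseteq p^{-1}(b)$ for every $b\in B$. Here $\mathcal{B}_{n+1}(E)$ is the space of probability measures on $E$ supported on at most $n+1$ points, with the L\'evy--Prokhorov metric. Separability and metrizability of $E$ and $E'$ are what make these measure spaces well behaved. Suppose $\dsecat(p)=n$ and $\dsecat(p')=m$, with distributed sections $s\colon B\to\mathcal{B}_{n+1}(E)$ and $s'\colon B'\to\mathcal{B}_{m+1}(E')$. We will produce a section of $p\times p'$ by taking the product measure.

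For $\mu=\sum_i \lambda_i e_i$ and $\nu=\sum_j \kappa_j e'_j$, define
\[
\mu\otimes\nu=\sum_{i,j}\lambda_i\kappa_j\,(e_i,e'_j).
\]
This measure is supported on at most $(n+1)(m+1)$ points. Set
\[
S(b,b')=s(b)\otimes s'(b').
\]
Every support point $(e_i,e'_j)$ lies in $p^{-1}(b)\times p'^{-1}(b')=(p\times p')^{-1}(b,b')$. Hence $S$ is a distributed section of $p\times p'$ with at most $(n+1)(m+1)$ support points, which gives
\[
\dsecat(p\times p')\le (n+1)(m+1)-1.
\]
This step is purely formal once continuity is settled.

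The main step is continuity of the product-measure map
\[
\otimes\colon \mathcal{B}_{n+1}(E)\times\mathcal{B}_{m+1}(E')\to \mathcal{B}_{(n+1)(m+1)}(E\times E').
\]
On $E\times E'$ we use the max metric, which metrizes the product topology and keeps it separable. The L\'evy--Prokhorov metric metrizes weak convergence on separable metric spaces. It therefore suffices to show that $\mu_k\Rightarrow\mu$ and $\nu_k\Rightarrow\nu$ imply $\mu_k\otimes\nu_k\Rightarrow\mu\otimes\nu$, which is the classical theorem on weak convergence of product measures (Billingsley, Theorem 2.8) for separable spaces.

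Alternatively, we can avoid citing Billingsley with a direct L\'evy--Prokhorov estimate. Take $\varepsilon$-couplings witnessing $\rho(\mu,\mu_k)<\varepsilon$ and $\rho(\nu,\nu_k)<\varepsilon$, via Strassen's theorem, which again needs separability. Their product coupling shows $\rho(\mu\otimes\nu,\mu_k\otimes\nu_k)\le 2\varepsilon$.

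Either way, $S$ is the composite of $s\times s'$ with $\otimes$, so it is continuous, and the inequality follows. The expected obstacle is exactly this continuity verification, specifically that weak topology, L\'evy--Prokhorov topology and product topology agree. This is where the separable metric hypothesis on $E$ and $E'$ enters.
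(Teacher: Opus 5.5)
Your proposal is correct and follows essentially the same route as the paper. Both use the section $(x,y)\mapsto s(x)\otimes s'(y)$, the support bound $\operatorname{supp}(\mu\otimes\nu)=\operatorname{supp}(\mu)\times\operatorname{supp}(\nu)$, and continuity of $\otimes$ via weak convergence on separable metric spaces. The only difference is that you cite Billingsley's theorem on products of weakly convergent measures (or a Strassen coupling estimate), whereas the paper proves continuity by hand using the portmanteau theorem on small disjoint balls around the finitely many atoms.
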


\begin{theorem*}
Let $W\xrightarrow{\theta}V\xrightarrow{\rho}E$
be fibrations between separable metric spaces. Then
\[
 \dsecat(\rho\circ\theta)+1
 \leq
 \bigl(\dsecat(\rho)+1\bigr)
 \bigl(\dsecat(\theta)+1\bigr).
\]
\end{theorem*}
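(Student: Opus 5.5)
We prove the composition inequality by building an explicit distributed section for $\rho\circ\theta$ out of distributed sections for $\rho$ and for $\theta$. We use the open-cover form of the definition: $\dsecat(p)\le n$ means there is a continuous map $s\colon B\to \mathcal{B}_{n+1}(E)$ with $p_*\circ s$ equal to the Dirac measure at each point. Here $\mathcal{B}_{n+1}(E)$ is the space of probability measures on $E$ supported on at most $n+1$ points, with the L\'evy--Prokhorov topology, and $p_*$ is the induced map on measures. Separability of the metric spaces makes these measure spaces metrizable and the push-forward maps continuous.

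Set $k=\dsecat(\rho)$ and $l=\dsecat(\theta)$. First take a distributed section $\sigma\colon E\to\mathcal{B}_{k+1}(V)$ of $\rho$, so $\sigma(e)=\sum_i \lambda_i(e)\,v_i(e)$ with each $v_i(e)\in\rho^{-1}(e)$. Next take a distributed section $\tau\colon V\to\mathcal{B}_{l+1}(W)$ of $\theta$. Define
\[
\Phi(e)=\sum_i \lambda_i(e)\,\tau(v_i(e)),
\]
which is the barycenter, or integration, of the measure $\tau_*\sigma(e)$. Its support lies in $\theta^{-1}(\rho^{-1}(e))=(\rho\circ\theta)^{-1}(e)$ and has at most $(k+1)(l+1)$ points. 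The map $\Phi$ factors as
\[
E\xrightarrow{\sigma}\mathcal{B}_{k+1}(V)\xrightarrow{\tau_*}\mathcal{B}_{k+1}\bigl(\mathcal{B}_{l+1}(W)\bigr)\xrightarrow{\mu}\mathcal{B}_{(k+1)(l+1)}(W),
\]
where $\mu$ is the multiplication of the probability monad. Therefore $\Phi$ is a distributed section of $\rho\circ\theta$, which gives $\dsecat(\rho\circ\theta)+1\le (k+1)(l+1)$.

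The main obstacle is continuity. The maps $\sigma$ and $\tau_*$ are continuous, since push-forward along a continuous map is continuous in the L\'evy--Prokhorov metric. The key lemma is that $\mu$ is continuous on measures of bounded finite support. We would prove it by showing directly that if $\sum_i a_i\delta_{\nu_i}$ is LP-close to $\sum_j b_j\delta_{\nu'_j}$, then the averages $\sum a_i\nu_i$ and $\sum b_j\nu'_j$ are LP-close. Equivalently, we can use weak convergence: integrate a bounded continuous $f$ on $W$, and note that $\nu\mapsto\int f\,d\nu$ is bounded and continuous on $\mathcal{B}(W)$. Separability ensures that LP convergence coincides with weak convergence.

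If the paper's definition is instead phrased via fiberwise joins or with $\dsecat$ defined through $\mathcal{B}_n$ of the fibers, the same construction applies fiberwise. In that case we would additionally check that the composite lands in the correct subspace. It does, because supports multiply at most.
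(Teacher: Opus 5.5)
Your proposal is correct and is essentially the paper's proof: the same section $s(x)=\bary(\tau_*\sigma(x))$, factored as $E\to\mathcal{B}_m(V)\to\mathcal{B}_m(\mathcal{B}_n(W))\to\mathcal{B}_{mn}(W)$, with continuity of the barycentre map obtained as in the paper by testing against $h\in C_b(W)$ through the bounded continuous function $\nu\mapsto\int_W h\,d\nu$, followed by the check that $\supp(s(x))\subseteq(\rho\circ\theta)^{-1}(x)$. The only slip is terminological: the definition you actually use is the section form (a continuous section of $\mathcal{B}_{n+1}(p)$), not an ``open-cover'' form.
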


A sequential version of distributional homotopic distance was recently introduced by Jauhari and Oprea \cite{O-J} via a pullback construction. The formulation adopted here, in terms of distributed homotopies, was arrived at independently; we show in Proposition \ref{dD-dsecat}) that it agrees with their definition in the two-map case. Specifically, in Section \ref{sec: distributional homotopic distance}, we define the \emph{distributional homotopic distance} between $f$ and $g$, denoted by $\dD(f,g)$, to be the least nonnegative integer $n$ such that there exists an $(n+1)$-distributed homotopy between them.
This intrinsic viewpoint allows the structural results obtained above for distributional sectional category to be applied directly.

As an application of the cohomological lower bound on the $\dsecat$, we obtain a cohomological lower bound for $\dD$ in Proposition~\ref{cohomological lb2}. Suppose $\pi_Y\colon PY\to Y\times Y$ is the free path space fibration.

\begin{proposition*}
Suppose there exist $z_i\in H^*(\SP^{n!}(Y\times Y);R)$ such that $SP^{n!}(\pi_{Y})^*(z_i)= 0$ and
\[
(\delta_{n!}^X)^*(SP^{n!}(f,g)^*(z_1))\smile\dots \smile (\delta_{n!}^X)^*(SP^{n!}(f,g)^*(z_n))\neq 0.
\]
Then $\dD(f,g)\geq n.$
\end{proposition*}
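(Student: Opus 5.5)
The plan is to reduce the statement to the cohomological lower bound for $\dsecat$ (Theorem~\ref{cohomological lower bound on dsecat}), applied to the pullback fibration that computes $\dD(f,g)$. Let $q\colon P_{f,g}\to X$ be the pullback of the free path fibration $\pi_Y\colon PY\to Y\times Y$ along $(f,g)\colon X\to Y\times Y$. By Proposition~\ref{dD-dsecat}, $\dD(f,g)=\dsecat(q)$. It therefore suffices to show $\dsecat(q)\ge n$. For this we exhibit classes $\beta_i\in H^*(\SP^{n!}(X);R)$ with $\SP^{n!}(q)^*(\beta_i)=0$ whose images under $(\delta^X_{n!})^*$ have nonzero cup product.

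The natural choice is $\beta_i=\SP^{n!}(f,g)^*(z_i)$. Write $\bar F\colon P_{f,g}\to PY$ for the canonical map of the pullback square, so that $\pi_Y\circ\bar F=(f,g)\circ q$. Since $\SP^{m}$ is a functor, we get $\SP^{n!}(\pi_Y)\circ\SP^{n!}(\bar F)=\SP^{n!}(f,g)\circ\SP^{n!}(q)$. Hence
\[
\SP^{n!}(q)^*(\beta_i)=\SP^{n!}(\bar F)^*\bigl(\SP^{n!}(\pi_Y)^*(z_i)\bigr)=0 .
\]
With $\alpha_i=(\delta^X_{n!})^*(\beta_i)$, the hypothesis is exactly $\alpha_1\smile\dots\smile\alpha_n\neq0$. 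Theorem~\ref{cohomological lower bound on dsecat} then gives $\dsecat(q)\ge n$, and so $\dD(f,g)\ge n$.

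The main point to check is that the hypotheses of Theorem~\ref{cohomological lower bound on dsecat} and Proposition~\ref{dD-dsecat} hold for $q$. First, $q$ must be a fibration, which is automatic since it is a pullback of the Hurewicz fibration $\pi_Y$. Second, any standing assumptions on the spaces (for instance metrizability, or conditions on $X$ and $Y$ under which $\dD$ is defined) must carry over to $P_{f,g}$. If Proposition~\ref{dD-dsecat} is only stated under such assumptions, I would state the present result under the same ones.

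An alternative route avoids the identification $\dD=\dsecat(q)$ and uses only the naturality of the lower bound. One would show that an $(n+1)$-distributed homotopy yields an $(n+1)$-distributed section of $q$, which is essentially the easy direction of Proposition~\ref{dD-dsecat}. After that, the cohomological argument above applies unchanged.
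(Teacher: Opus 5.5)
Your proof is correct and follows the paper's argument: set $\beta_i=\SP^{n!}(f,g)^*(z_i)$, use functoriality of $\SP^{n!}$ on the pullback square to get $\SP^{n!}(q)^*(\beta_i)=0$, then combine Theorem~\ref{cohomological lower bound on dsecat} with the identification $\dD(f,g)=\dsecat(q)$ from Proposition~\ref{dD-dsecat}. You are also right that only the easy inequality $\dsecat(q)\le\dD(f,g)$ is actually needed for this lower bound.
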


In Section \ref{sec: fibration inequality}, we use
the composition inequality of $\dsecat$ to obtain the fibration inequality for $\dD$ (see Theorem \ref{thm: fibration inequality}).

\begin{theorem*}
Let $p\colon E\to B, p'\colon E'\to B'$
be fibrations between separable metric spaces, with $B$ path-connected. Let $f,g\colon E\to E'$ be fibre-preserving maps covering $\bar f,\bar g\colon B\to B'$. Fix $b_0\in B$ such that $\bar f(b_0)=\bar g(b_0)=b'_0$, and let $F=p^{-1}(b_0)$ and $F'=(p')^{-1}(b'_0)$. Then the restrictions $f_0,g_0\colon F\to F'$ satisfy
\[
 \dD(f,g)+1
 \leq
 \bigl(\dD(f_0,g_0)+1\bigr)
 \bigl(\dcat(B)+1\bigr).
\]
\end{theorem*}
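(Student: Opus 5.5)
We prove the fibration inequality by reducing it to the composition inequality for $\dsecat$, applied to a factorization of the path fibration associated with the pair $(f,g)$.

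\emph{Reformulation.} By Proposition~\ref{dD-dsecat}, $\dD(f,g)=\dsecat(e_{f,g})$, where $e_{f,g}\colon E_{f,g}\to E$ is the pullback of the free path fibration $\pi_{E'}\colon PE'\to E'\times E'$ along $(f,g)\colon E\to E'\times E'$. Thus $E_{f,g}=\{(x,\gamma): \gamma(0)=f(x),\ \gamma(1)=g(x)\}$.

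\emph{Factorization.} We factor $e_{f,g}$ through the space
\[
V=\{(x,\beta,\gamma): \beta\in PB,\ \beta(0)=p(x),\ \beta(1)=b_0,\ \gamma\in PE',\ \gamma(0)=f(x),\ \gamma(1)=g(x),\ p'\circ\gamma=\bar f\beta * \overline{\bar g\beta}\}.
\]
Here $\overline{\bar g\beta}$ denotes the reverse of $\bar g\circ\beta$, and the condition says that $\gamma$ lies over a loop through $b_0'$. A cleaner model is available: using the lifting function of $p$, transport $x$ along $\beta$ to a point $x_1\in F$. A homotopy $f_0(x_1)\simeq g_0(x_1)$ in $F'$, together with the transports of $f$ and $g$ along $\beta$ via the lifting function of $p'$, then produces a path from $f(x)$ to $g(x)$ in $E'$.

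Concretely, we set $\rho\colon V\to E$ to be the projection $(x,\beta)\mapsto x$, where $V$ is the space of pairs with $\beta$ a path from $p(x)$ to $b_0$. This is the pullback along $p$ of the based path fibration $P_{b_0}B\to B$, so $\dsecat(\rho)\le\dsecat(P_{b_0}B\to B)=\dcat(B)$, using pullback monotonicity of $\dsecat$ from the preliminaries. Next, we let $\theta\colon W\to V$ be the pullback of $e_{f_0,g_0}\colon F_{f_0,g_0}\to F$ along the transport map $\tau\colon V\to F$, $(x,\beta)\mapsto$ the endpoint of the lift of $\beta$ starting at $x$. Then $\dsecat(\theta)\le\dsecat(e_{f_0,g_0})=\dD(f_0,g_0)$. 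Finally, we build a fibrewise map $W\to E_{f,g}$ over $E$. On a point $(x,\beta,\omega)$, with $\omega$ a path in $F'$ from $f_0(\tau)$ to $g_0(\tau)$, it returns the concatenation
\[
f(\tilde\beta)\ *\ \omega\ *\ \overline{g(\tilde\beta)},
\]
where $\tilde\beta$ is the lift of $\beta$ in $E$. This is continuous because lifting functions are continuous, and it lands in $E_{f,g}$ since $f$ and $g$ are fibre-preserving.

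\emph{Conclusion.} Since $e_{f,g}$ receives a map over $E$ from $\rho\circ\theta$, we have $\dsecat(e_{f,g})\le\dsecat(\rho\circ\theta)$: any distributed section of $\rho\circ\theta$ pushes forward to one of $e_{f,g}$. The composition inequality (Theorem~\ref{prop: composition inequality}) then gives
\[
\dD(f,g)+1\le(\dcat(B)+1)(\dD(f_0,g_0)+1).
\]

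\emph{Technical points.} First, $\rho$ and $\theta$ must be fibrations between separable metric spaces. Path spaces of separable metric spaces with the compact-open topology are separable metrizable, and pullbacks of fibrations are fibrations, so this holds. Second, path-connectedness of $B$ is needed for $\dcat(B)=\dsecat(P_{b_0}B\to B)$ to be independent of the choice of $b_0$. Third, the pullback and pushforward monotonicity statements for $\dsecat$ must be available from the preliminaries.

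The main obstacle is making the transport map $\tau$ continuous and correctly defined so that $\theta$ really is a pullback fibration. I would handle this with a Hurewicz lifting function $\Lambda\colon E\times_B B^I\to E^I$ for $p$, and similarly one for $p'$, though the construction above only needs the one for $p$.
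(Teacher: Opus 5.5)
Your proposal is correct and is essentially the paper's proof: you pull back the based path fibration along $p$ to get $\rho$, and you pull back $p_{f_0,g_0}$ along the transport map $(x,\beta)\mapsto\Gamma(x,\beta)(1)$ to get $\theta$. You then map into $P(f,g)$ over $E$ via $(f\circ\lambda)*\delta*\overline{(g\circ\lambda)}$ and conclude by monotonicity and the composition inequality. The first description of $V$, with the constraint on $p'\circ\gamma$, is unnecessary and should be dropped; the ``cleaner model'' you actually use is exactly the paper's construction.
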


Furthermore, as a consequence of the multiplicative product inequality \eqref{eq: product inequality for dsecat}, we obtain a multiplicative triangle inequality in Section \ref{sec: multiplicative triangle inequality} .

\begin{theorem*}
Let $f,g,h\colon X\to Y$ be maps between separable metric spaces. Then
\[
\dD(f,g)+1
\le
\bigl(\dD(f,h)+1\bigr)\bigl(\dD(h,g)+1\bigr).
\]
\end{theorem*}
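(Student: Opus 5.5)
The plan is to realise $\dD(f,g)$ as a distributional sectional category of a suitable fibration, and then feed it into the product inequality \eqref{eq: product inequality for dsecat}. Concretely, I would use the characterization announced in Proposition~\ref{dD-dsecat}: $\dD(f,g)=\dsecat\bigl((f,g)^*\pi_Y\bigr)$, where $(f,g)^*\pi_Y$ is the pullback of the free path fibration $\pi_Y\colon PY\to Y\times Y$ along $(f,g)\colon X\to Y\times Y$. Equivalently, $\dD(f,g)\le n$ if and only if there is an $(n+1)$-distributed homotopy from $f$ to $g$. Such a homotopy is a continuous assignment $x\mapsto\mu_x$ of probability measures, supported on at most $n+1$ paths from $f(x)$ to $g(x)$, continuous for the L\'evy--Prokhorov metric.

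Set $a=\dD(f,h)$ and $b=\dD(h,g)$. Choose an $(a+1)$-distributed homotopy $x\mapsto\mu_x=\sum_i\lambda_i(x)\,\gamma_i^x$ from $f$ to $h$, and a $(b+1)$-distributed homotopy $x\mapsto\nu_x=\sum_j\kappa_j(x)\,\sigma_j^x$ from $h$ to $g$. The candidate $(a+1)(b+1)$-distributed homotopy from $f$ to $g$ is the concatenation measure
\[
x\mapsto \sum_{i,j}\lambda_i(x)\kappa_j(x)\,\bigl(\gamma_i^x\ast\sigma_j^x\bigr).
\]
This is the pushforward of $\mu_x\otimes\nu_x$ under the concatenation map $c\colon P_{f(x),h(x)}Y\times P_{h(x),g(x)}Y\to P_{f(x),g(x)}Y$. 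Its support has at most $(a+1)(b+1)$ points, which gives $\dD(f,g)+1\le(a+1)(b+1)$.

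To stay inside the $\dsecat$ framework, I would phrase this step as follows. The pair of distributed homotopies is a distributed section of the fibration $q\colon E_{f,h}\times_X E_{h,g}\to X$, where $E_{f,h}=(f,h)^*\pi_Y$. This fibration is the pullback of the product fibration $E_{f,h}\times E_{h,g}\to X\times X$ along the diagonal $\Delta_X$. By Theorem~\ref{prop:product-inequality}, the product fibration has $\dsecat\le(a+1)(b+1)-1$, and $\dsecat$ does not increase under pullback, so $\dsecat(q)\le(a+1)(b+1)-1$. Concatenation gives a fibrewise map $E_{f,h}\times_X E_{h,g}\to E_{f,g}$ over $X$. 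Pushing forward a distributed section along a fibrewise map over the base does not increase the support size, so $\dsecat\bigl((f,g)^*\pi_Y\bigr)\le\dsecat(q)$, and the inequality follows. The separable metric hypothesis is what is needed to apply Theorem~\ref{prop:product-inequality} to these path-space fibrations.

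The main obstacle is continuity. I need to check that $x\mapsto\mu_x\otimes\nu_x$ is continuous in the L\'evy--Prokhorov metric on the product, and that pushforward along the continuous fibrewise map $c$ preserves this continuity. I expect this to be exactly the technical content of the proof of Theorem~\ref{prop:product-inequality}, restricted along the diagonal. The second fact, pushforward along $c$, should follow from the standard statement that pushforward along a continuous map is continuous on finitely supported measures with respect to weak convergence, which agrees with L\'evy--Prokhorov convergence on separable metric spaces.
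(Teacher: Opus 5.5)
Your proposal is correct and follows the paper's proof essentially step by step. The paper also realises the fibre product $Q=\mathcal{P}(f,h)\times_X\mathcal{P}(h,g)$ as the pullback of $\bar\pi^{f,h}_Y\times\bar\pi^{h,g}_Y$ along $\Delta_X$, which gives $\dsecat(q)\le\dsecat(\bar\pi^{f,h}_Y\times\bar\pi^{h,g}_Y)$. It then uses path concatenation to obtain a map $Q\to\mathcal{P}(f,g)$ over $X$, so that monotonicity gives $\dsecat(\bar\pi^{f,g}_Y)\le\dsecat(q)$, and it concludes with Theorem~\ref{prop:product-inequality}, with the continuity issues you flag handled by Lemma~\ref{lem:tensor-continuity} and the functoriality of $E_k(-)$.
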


Suppose $G$ is an $H$-group. More precisely, a path-connected, locally finite CW complex equipped with an $H$-space structure. Denote by
\[
\mu \colon G \times G \to G, \quad \nu \colon G \to G, \quad \text{and} \quad e \in G
\]
the multiplication, the homotopy inverse, and the homotopy identity, respectively. Consider the difference map
\[
\delta \colon G \times G \to G, \qquad \delta(x,y) := \mu(x,\nu(y)).
\]
In Section \ref{sec: H-group case}, we consider maps into $H$-groups and give an alternative approach to the Jauhari--Oprea formula (see Proposition \ref{prop: dD for H-groups}).

\begin{proposition*}
Let $f,g\colon X\to G$ be maps, where $G$ is an $H$-group. Define $\delta_{f,g}(x)=\delta(f(x),g(x))$.
Then
\[
\dD(f,g)=\dcat(\delta_{f,g}),
\qquad
\dD(f,g)\le \dcat(G).
\]
\end{proposition*}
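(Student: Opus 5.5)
The plan is to relate $(n+1)$-distributed homotopies from $f$ to $g$ with $(n+1)$-distributed homotopies from the constant map $e$ to $\delta_{f,g}$, using the $H$-group structure to translate paths. Here $\dcat(h)$ of a map $h\colon X\to G$ is read as the least $n$ such that $h$ admits an $(n+1)$-distributed homotopy to the constant map at $e$. By the pullback characterization (Proposition \ref{dD-dsecat}), $\dD(f,g)=\dsecat((f,g)^*\pi_G)$. Similarly, $\dcat(\delta_{f,g})$ is the distributional sectional category of the pullback along $\delta_{f,g}$ of the based path fibration $P_eG\to G$.

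\textbf{Step 1: from $f$ to $g$, produce $e$ to $\delta_{f,g}$.} Given an $(n+1)$-distributed homotopy between $f$ and $g$, each $x$ receives a finitely supported probability measure $\sum_j \lambda_j(x)[\gamma_j^x]$ on paths in $G$ from $f(x)$ to $g(x)$, depending continuously on $x$ in the L\'evy--Prokhorov sense. Apply pointwise the map $\gamma\mapsto \mu(\gamma(t),\nu(g(x)))$, which is continuous in $(x,\gamma)$. This gives paths from $\delta_{f,g}(x)$ to $\mu(g(x),\nu(g(x)))$. Fix a homotopy $H\colon \mu\circ(\id,\nu)\simeq e$ and concatenate each path with $t\mapsto H(g(x),t)$. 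The pushforward of measures under a continuous map depending continuously on $x$ is continuous, and it preserves support size. This yields $\dcat(\delta_{f,g})\le \dD(f,g)$.

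\textbf{Step 2: the converse.} Given paths from $\delta_{f,g}(x)$ to $e$, apply $\gamma\mapsto\mu(\gamma(t),g(x))$. This gives paths from $\mu(\mu(f(x),\nu g(x)),g(x))$ to $\mu(e,g(x))$. Now use homotopy associativity, the inverse law, and the unit law. Together they supply a homotopy $K\colon G\times G\times I\to G$ from $\mu(\mu(a,\nu b),b)$ to $a$, and a homotopy $L$ from $\mu(e,b)$ to $b$. Prepend the reverse of $K(f(x),g(x),-)$ and append $L(g(x),-)$ to get paths from $f(x)$ to $g(x)$, again with the same weights. Hence $\dD(f,g)\le\dcat(\delta_{f,g})$.

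\textbf{Step 3: the bound by $\dcat(G)$.} Take an $(n+1)$-distributed contraction of $G$, meaning measures on paths from $y$ to $e$ with $n=\dcat(G)$, and precompose with $\delta_{f,g}$. This gives $\dcat(\delta_{f,g})\le\dcat(G)$.

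The main obstacle is making sure the pointwise path-transformations preserve continuity of the measure-valued maps into $\mathcal B_{n+1}$ of the path spaces with the L\'evy--Prokhorov metric. Here I would invoke the standard fact, used earlier for $\dsecat$, that a continuous map $\Phi\colon X\times P\to P'$ induces a continuous map $X\times\mathcal B_{n+1}(P)\to\mathcal B_{n+1}(P')$ on finitely supported measures. A secondary point is to ensure that concatenation with the fixed homotopies, viewed as a map on path spaces, is continuous. This holds because it is given by continuous formulas in the compact-open topology.
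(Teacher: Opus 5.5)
Your proof is correct, but it takes a genuinely different route from the paper. The paper never manipulates distributed homotopies directly. It pastes the pullback square defining $p_{f,g}$ onto the square of Proposition~\ref{hp}, which exhibits $\pi_G$ as a homotopy pullback of the based path fibration $p^G$ along $\delta$. That square is obtained via Lemma~\ref{clave}, i.e.\ Dold's fibrewise homotopy-equivalence theorem together with Mather's cube lemma. It needs the CW hypothesis, so that $G\times G$ is a Dold space, and a cofibration argument to normalise $\delta$ so that $\delta\circ\Delta=C_e$ strictly. The paper then applies Lemma~\ref{dsecat-dcat} in its homotopy-pullback form, with equality because $P_0G$ is contractible, and treats $\dD(f,g)\le\dcat(G)$ as immediate; your Step~3 is the natural justification of that last bound.

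You instead translate distributed homotopies by hand, pushing the measures forward along explicit, $x$-dependent path maps built from the H-group laws. This buys elementarity and generality in the space: you need no CW, Dold-space or cofibration input and no strict normalisation of $\delta$, only that $G$ is a metrizable H-group. The one analytic input you flag does hold. If $\mu=\sum_i a_i\delta_{p_i}$, continuity of $\Phi$ at the finitely many points $(x,p_i)$ controls the L\'evy--Prokhorov distance directly, with no separability needed. The paper itself uses this kind of continuity without comment when proving homotopy invariance of $\dD$.

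The paper's route buys structure instead. It identifies $p_{f,g}$ itself, up to homotopy pullback, with the pullback of $p^G$ along $\delta_{f,g}$. The same square also gives $\dTC(G)=\dcat(G)$ (Corollary~\ref{dTC=dcat}). On the fibre it only uses the unit law, whereas your Step~2 genuinely needs homotopy associativity. That is harmless here, since $G$ is assumed to be an H-group.

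Two small corrections. First, push-forward need not preserve the size of the support; it only cannot increase it, which is all you need. Second, $\dcat(G)$ is defined with paths ending at a chosen base point $y_0$. In Step~3, either take $y_0=e$ or append a fixed path from $y_0$ to $e$, which exists because $G$ is path-connected.
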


Compared with the work of Oprea and Jauhari \cite{O-J}, the present paper contains several genuinely new contributions. First, we prove Jauhari’s conjecture concerning the cohomological lower bound for $\dsecat$ using cohomology with arbitrary coefficients. Second, we establish a natural rational splitting of the homomorphism induced by the symmetric diagonal on cohomology with rational coefficients, and derive from it a kernel identity and the corresponding rational cup-length lower bound for $\dsecat$. Third, we prove multiplicative product and composition inequalities for $\dsecat$, and develop new applications of these inequalities to the study of distributional homotopic distance. These results substantially extend the scope of the existing theory and provide new tools for the investigation of distributional sectional category and related invariants.

\section{Preliminaries}\label{sec: prelim}
We begin by fixing notation and recalling the basic notions on spaces of measures that will be used throughout the paper.

For a metric space $Z$ and an integer $k \geq 1$, let $\mathcal{B}_k(Z)$ denote the subspace of the space $\mathcal{B}(Z)$ of Borel probability measures on $Z$ consisting of those measures whose support has cardinality at most $k$. We equip $\mathcal{B}_k(Z)$ with the topology induced by the L\'evy--Prokhorov metric (see \cite{P} for more details).

An element $\mu \in \mathcal{B}_k(Z)$ can thus be written as
$$
\mu = \sum_{i \in K} a_i\, \delta_{z_i},
$$
where $K$ is an index set with $|K| \leq k$, $z_i \in Z$, and the coefficients satisfy $a_i \geq 0$ and $\sum_{i \in K} a_i = 1$. It will often be convenient to identify each Dirac delta $\delta_{z_i}$ with the corresponding point $z_i \in Z$, and to regard $\mu$ as a formal unordered convex combination
$$
\mu = \sum_{i \in K} a_i\, z_i;
$$
we shall use one notation or the other depending on the setting.

The \emph{support} of $\mu$ is
$$
\operatorname{supp}(\mu) = \{z_i \in Z : a_i > 0,\ i \in K\}.
$$

\begin{remark}
Suppose $Z$ is a separable metric space. Then the L\'evy--Prokhorov topology on $\mathcal{B}(Z)$ coincides with the \emph{weak topology}, that is, the topology in which $\mu_n \to \mu$ if and only if
$$
\int_Z f\, d\mu_n \;\longrightarrow\; \int_Z f\, d\mu
$$
for every $f$ in the space $C_b(Z)$ of bounded continuous real-valued functions on $Z$ (see \cite{P}, \cite{Billingsley1999}).

Since $\mathcal{B}_k(Z) \subset \mathcal{B}(Z)$ carries the subspace topology, this identification restricts accordingly: for $\mu_n, \mu \in \mathcal{B}_k(Z)$,
$$
\mu_n \to \mu \quad \text{if and only if} \quad \int_Z f\, d\mu_n \;\longrightarrow\; \int_Z f\, d\mu \quad \text{for every } f \in C_b(Z).
$$
\end{remark}

\bigskip
The distributional sectional category was first introduced by Dranishnikov and Jauhari in~\cite{D-J}, and was subsequently developed in detail by Jauhari in~\cite{Jau1}. In what follows, we recall the definition of the distributional sectional category and discuss its basic properties.

Let $p\colon E \to B$ be a map, where $E$ is a metric space and $B$ is a topological space.
For $k \geq 1$, define
\[
\begin{aligned}
E_k(p)
  &= \bigcup_{x \in B} \mathcal{B}_k(p^{-1}(x))  \\
  &= \bigcup_{x \in B} \{\mu \in \mathcal{B}_k(E) : \operatorname{supp}(\mu) \subseteq p^{-1}(x)\},
\end{aligned}
\]
and define the map
\[
\mathcal{B}_k(p)\colon  E_k(p) \longrightarrow B, 
\qquad 
\mathcal{B}_k(p)(\mu) = x 
\ \text{whenever } \mu \in \mathcal{B}_k(p^{-1}(x)).
\]
It was shown in \cite[Proposition 5.1]{D-J} that, if \(p\) is a fibration, then $\mathcal{B}_k(p)$ is also a fibration.

\begin{definition} \cite[Definition 5.2]{Jau1}
Let \(p \colon E \to B\) be a fibration, where \(E\) is a metric space. 
The \emph{distributional sectional category} of \(p\), denoted by \(\mathrm{dsecat}(p)\), is the least integer \(n \geq 0\) such that the fibration
\[
\mathcal{B}_{n+1}(p) \colon E_{n+1}(p) \longrightarrow B
\]
admits a (continuous) section.
\end{definition}

When \(B\) is, in addition, paracompact, this construction provides a lower bound for the usual sectional category of the fibration \(p\):
\[
\mathrm{dsecat}(p)\;\leq\;\mathrm{secat}(p).
\]
Indeed, for every fibration $p\colon E\to B$ and every integer \(k\geq 1\), there exists a canonical map
$*^k_B E \longrightarrow E_k(p)$
from the \(k\)-fold fiberwise join of \(E\) over \(B\) to the space \(E_k(p)\), making the following triangle commute:
\[
\xymatrix{
{*^k_B E} \ar[rr] \ar[dr]_{*^k_B p} & & {E_k(p)} \ar[dl]^{\mathcal{B}_k(p)} \\
& {B}. &
}
\]
Consequently, if \(\mathrm{secat}(p) \leq n\), then, by definition, the fibration \(*^{n+1}_B p\) admits a (continuous) section; composing with the canonical map above yields a section of \(\mathcal{B}_{n+1}(p)\), whence \(\mathrm{dsecat}(p)\leq n\). Taking the least such \(n\) gives \(\mathrm{dsecat}(p)\leq\mathrm{secat}(p)\).

An appealing feature of the construction \(\mathcal{B}_k(p)\colon E_k(p)\to B\) is its functorial behavior for each integer \(k\ge 1\).  
Indeed, given a commutative square,
\[
\xymatrix{
{E}  \ar[rr]^f \ar[d]_p & & E' \ar[d]^{p'} \\
 {B} \ar[rr]_g & & {B'},
}
\]
where \(p\) and \(p'\) are fibrations, there is an induced commutative square
\[
\xymatrix{
{E_k(p)}  \ar[rr]^{f^k_*} \ar[d]_{\mathcal{B}_k(p)} & & {E_k(p')} \ar[d]^{\mathcal{B}_k(p')} \\
 {B} \ar[rr]_g & & {B'},}
\]
\noindent where $f^k_*$ is defined naturally.  
Moreover, this assignment is functorial: it preserves compositions and identities, that is,
\[
(g\circ f)^k_*=g^k_*\circ f^k_*
\qquad\text{and}\qquad
(\mathrm{id}_E)^k_*=\mathrm{id}_{E_k(p)}.
\]

Distributional sectional category shares most of the pleasant formal properties of the classical sectional category.

\begin{proposition}\label{pullbacks}
Consider the following pullback
\[
\xymatrix{
{E}  \ar[rr]^f \ar[d]_p & & E' \ar[d]^{p'} \\
 {B} \ar[rr]_g & & {B'},
}
\]
where \(p\) and \(p'\) are fibrations. Then, 
\[
\xymatrix{
{E_k(p)}  \ar[rr]^{f^k_*} \ar[d]_{\mathcal{B}_k(p)} & & {E_k(p')} \ar[d]^{\mathcal{B}_k(p')} \\
 {B} \ar[rr]_g & & {B'}}
\]
\noindent is also a pullback for each $k\geq 1$.     
\end{proposition}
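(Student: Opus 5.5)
To prove that the induced square is a pullback, the plan is to compare $E_k(p)$ with the canonical pullback
\[
P=\{(b,\nu)\in B\times E_k(p') : g(b)=\mathcal{B}_k(p')(\nu)\}
\]
with its subspace topology. The comparison map is $\Phi=(\mathcal{B}_k(p),f^k_*)\colon E_k(p)\to P$, and I will show that $\Phi$ is a homeomorphism. Since $p$ is the pullback of $p'$ along $g$, we may assume without loss of generality that
\[
E=\{(b,e')\in B\times E' : g(b)=p'(e')\},
\]
with $p$ and $f$ the two projections. Then for each $b\in B$, the projection $f$ restricts to a bijection $p^{-1}(b)\to p'^{-1}(g(b))$.

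\textbf{Bijectivity.} Take $\mu=\sum a_i\delta_{(b,e'_i)}\in\mathcal{B}_k(p^{-1}(b))$. Then $\Phi(\mu)=(b,\sum a_i\delta_{e'_i})$. Conversely, given $(b,\nu)\in P$ with $\nu=\sum a_i\delta_{e'_i}$ supported in $p'^{-1}(g(b))$, set $\Psi(b,\nu)=\sum a_i\delta_{(b,e'_i)}$. Pushforward along the bijection of fibres preserves cardinality of support and weights, so $\Phi$ and $\Psi$ are mutually inverse bijections.

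\textbf{Continuity of $\Phi$.} The map $f^k_*$ is pushforward of measures along the continuous map $f$, which is continuous for the weak (Lévy--Prokhorov) topology. Since $\int h\,d(f_*\mu)=\int h\circ f\,d\mu$ and $h\circ f\in C_b$, this works on separable spaces. Without separability, I would use the convergence description below. Continuity of $\mathcal{B}_k(p)$ is part of its being a fibration, as in \cite[Proposition 5.1]{D-J}.

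\textbf{Continuity of $\Psi$ (main obstacle).} The metric on $E$ has to be dealt with. I would fix on $E\subseteq B\times E'$ a metric inducing the subspace topology; when $B$ is not metrizable I instead argue via the intrinsic description of convergence. In $\mathcal{B}_k$, a net $\mu_\lambda=\sum a_i^\lambda\delta_{z_i^\lambda}$ converges to $\mu=\sum a_j\delta_{z_j}$ iff, eventually, for disjoint small neighbourhoods $U_j$ of the support points $z_j$, we have $\mu_\lambda(U_j)\to a_j$ and the mass outside $\bigcup U_j$ tends to $0$. This is a standard characterization of the Lévy--Prokhorov topology on finitely supported measures with bounded support size.

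Now suppose $(b_\lambda,\nu_\lambda)\to(b,\nu)$ in $P$. Neighbourhoods of $(b,e'_j)$ in $E$ contain sets of the form $(V\times U'_j)\cap E$. For $\lambda$ large we have $b_\lambda\in V$, and every point of $\operatorname{supp}\Psi(b_\lambda,\nu_\lambda)$ has first coordinate $b_\lambda$. Hence the mass that $\Psi(b_\lambda,\nu_\lambda)$ puts on $(V\times U'_j)\cap E$ equals $\nu_\lambda(U'_j)\to a_j$, and the leftover mass tends to $0$. Therefore $\Psi(b_\lambda,\nu_\lambda)\to\Psi(b,\nu)$.

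This shows that $\Phi$ is a homeomorphism over $B$ and $E_k(p')$, so the square is a pullback. Finally, the case of a general pullback $E$ reduces to the canonical model, because a homeomorphism $E\cong B\times_{B'}E'$ over $B$ induces, by functoriality of $(-)^k_*$, a homeomorphism of the corresponding spaces $E_k(p)$.
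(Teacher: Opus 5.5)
The paper states this proposition without proof: it is listed among the formal properties of $\mathcal{B}_k$ and then used to derive Proposition~\ref{segundo}. So there is no argument of the paper's to compare with. Your proof is correct and supplies the point-set verification the paper leaves implicit.

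The reduction to the canonical model $E=B\times_{B'}E'$, the fibrewise bijectivity of $\Phi$, and the continuity of $\Psi$ via the basic neighbourhoods $(V\times U'_j)\cap E$ are all sound. Your argument never uses that $p$ and $p'$ are fibrations, nor any separability. It therefore proves the statement for an arbitrary pullback square with metrizable total spaces.

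Two small points would make it airtight.

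\emph{The convergence criterion.} State it in the precise form you actually use. For a finitely supported limit $\mu=\sum_j a_j\delta_{z_j}$ in any metric space, $d_{\mathrm{LP}}(\mu_\lambda,\mu)\to 0$ if and only if $\liminf_\lambda\mu_\lambda(W)\ge a_j$ for every $j$ and every $W$ in a neighbourhood base at $z_j$. Monotonicity extends this to all neighbourhoods, and $\sum_j a_j=1$ then yields your statement about the leftover mass. Both directions are direct estimates from the definition of the L\'evy--Prokhorov metric; one direction is the computation in the paper's proof of continuity of $r_{i,N}$. No separability is needed. This one criterion does three jobs:
\begin{enumerate}
\item it gives continuity of pushforward along any continuous map, since $f_*\mu_\lambda(U)=\mu_\lambda(f^{-1}(U))$;
\item it justifies your ``without loss of generality'', because the topology of $\mathcal{B}_k(E)$ then depends only on the topology of $E$ (alternatively, pushforwards along a homeomorphism and its inverse are mutually inverse continuous maps);
\item it is exactly what your $\Psi$-argument checks.
\end{enumerate}

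\emph{Continuity of $\mathcal{B}_k(p)$.} You do not need to cite \cite[Proposition 5.1]{D-J} for this. Suppose $\mu_\lambda\to\mu$ with $\mu$ supported over $b$, and let $V\ni b$ be open. Then $p^{-1}(V)$ is a neighbourhood of a support point $z_1$ of $\mu$, so $\liminf_\lambda\mu_\lambda(p^{-1}(V))\ge a_1>0$. Hence eventually $\mu_\lambda$ has a support point in $p^{-1}(V)$, which forces $\mathcal{B}_k(p)(\mu_\lambda)\in V$.
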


As a consequence of Proposition \ref{pullbacks} above, we have:

\begin{proposition}[{\cite[Proposition 3.4]{Dau-J}}] \label{segundo}
Let $p\colon E\to B$ be a fibration and \(f:B'\to B\) any map.  
If \(p'\colon E'\to B'\) denotes the pullback of \(p\) along \(f\),
\[
\xymatrix{
E' \ar[r] \ar[d]_{p'} & E \ar[d]^{p} \\
B' \ar[r]_{f} & B
}
\]
then
$\mathrm{dsecat}(p') \leq \mathrm{dsecat}(p).$
\end{proposition}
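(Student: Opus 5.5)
The plan is to use Proposition \ref{pullbacks} to move a section of $\mathcal{B}_{n+1}(p)$ back along $f$. Set $n=\dsecat(p)$, so that $\mathcal{B}_{n+1}(p)\colon E_{n+1}(p)\to B$ has a continuous section $\sigma$. We want a continuous section of $\mathcal{B}_{n+1}(p')\colon E_{n+1}(p')\to B'$; once we have one, the definition of $\dsecat$ gives $\dsecat(p')\le n$.

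First, Proposition \ref{pullbacks} applies to the pullback square in the statement, with $p'$ in the role of the top-left fibration and $p$ in the role of the right fibration. Writing $\bar f\colon E'\to E$ for the top map of the square, it says that
\[
\xymatrix{
E_{n+1}(p') \ar[r]^{\bar f^{\,n+1}_*} \ar[d]_{\mathcal{B}_{n+1}(p')} & E_{n+1}(p) \ar[d]^{\mathcal{B}_{n+1}(p)} \\
B' \ar[r]_{f} & B
}
\]
is again a pullback square.

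Second, I would consider the pair of maps $\id_{B'}\colon B'\to B'$ and $\sigma\circ f\colon B'\to E_{n+1}(p)$. They are compatible over $B$, since
\[
\mathcal{B}_{n+1}(p)\circ\sigma\circ f=f=f\circ\id_{B'}.
\]
By the universal property of the pullback there is a unique continuous map $s'\colon B'\to E_{n+1}(p')$ with $\mathcal{B}_{n+1}(p')\circ s'=\id_{B'}$ and $\bar f^{\,n+1}_*\circ s'=\sigma\circ f$. The first identity says exactly that $s'$ is a section of $\mathcal{B}_{n+1}(p')$, which gives $\dsecat(p')\le n=\dsecat(p)$.

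Concretely, $s'(b')$ is the measure on $(p')^{-1}(b')\cong p^{-1}(f(b'))$ obtained by transporting $\sigma(f(b'))$. The main obstacle is only the use of Proposition \ref{pullbacks}: in particular, that the canonical bijection $E_{n+1}(p')\to B'\times_B E_{n+1}(p)$ is a homeomorphism for the Lévy--Prokhorov topologies. This is taken as given from the earlier result. A further minor point is the metric on $E'$: it should be chosen compatibly with the pullback topology $E'\subseteq B'\times E$, as is implicit in that proposition.
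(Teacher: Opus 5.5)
Your argument is correct and is essentially the paper's. The paper states this result directly as a consequence of Proposition~\ref{pullbacks}, and the intended argument is the one you give: factor $\sigma\circ f$ and $\mathrm{id}_{B'}$ through the pulled-back square for $\mathcal{B}_{n+1}$ using its universal property, with the case $\dsecat(p)=\infty$ being vacuous.
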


We also have homotopy invariance in the following sense.

\begin{proposition}[{\cite[Proposition 5.3]{Jau1}\label{primero}}]
Let $p\colon E\to B$ and \(p'\colon E'\to B'\) be fibrations fitting into a homotopy commutative diagram
\[
\xymatrix{
E \ar[rr]^{\alpha }_{\simeq } \ar[d]_{p} & & E' \ar[d]^{p'} \\
B \ar[rr]_{\beta }^{\simeq } & & B',
}
\]
where both \(\alpha\) and \(\beta\) are homotopy equivalences.  
Then $\mathrm{dsecat}(p)=\mathrm{dsecat}(p').$
\end{proposition}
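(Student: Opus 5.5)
The plan is to compare sections of the two fibrations $\mathcal{B}_{k}(p)$ and $\mathcal{B}_{k}(p')$ for each $k\ge 1$. We show that if one admits a section, so does the other, and then take the least such $k$. By symmetry it suffices to prove $\mathrm{dsecat}(p')\le \mathrm{dsecat}(p)$. Let $\alpha'$ and $\beta'$ be homotopy inverses of $\alpha$ and $\beta$.

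First we replace the homotopy commutative square by a strictly commutative one. Since $p'\circ\alpha\simeq \beta\circ p$ and $p'$ is a fibration, the homotopy lifting property lets us lift the homotopy $\beta p\simeq p'\alpha$, starting at $\alpha$. This gives a map $\tilde\alpha\colon E\to E'$ with $\tilde\alpha\simeq\alpha$ and $p'\tilde\alpha=\beta p$. Functoriality of the construction then yields a map $\tilde\alpha^k_*\colon E_k(p)\to E_k(p')$ over $\beta$. Similarly, we produce $\tilde\alpha'\colon E'\to E$ over $\beta'$ with $p\tilde\alpha'=\beta' p'$, and the induced $(\tilde\alpha')^k_*$ over $\beta'$.

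Now suppose $s\colon B\to E_k(p)$ is a section of $\mathcal{B}_k(p)$, and set
\[
\sigma=\tilde\alpha^k_*\circ s\circ\beta'\colon B'\to E_k(p').
\]
Then $\mathcal{B}_k(p')\circ\sigma=\beta\circ\mathcal{B}_k(p)\circ s\circ\beta'=\beta\beta'\simeq \mathrm{id}_{B'}$. Because $\mathcal{B}_k(p')$ is a fibration (by \cite[Proposition 5.1]{D-J}), we can lift the homotopy $\beta\beta'\simeq\mathrm{id}_{B'}$ starting at $\sigma$. The endpoint of the lift is a genuine section of $\mathcal{B}_k(p')$. Taking $k=\mathrm{dsecat}(p)+1$ gives $\mathrm{dsecat}(p')\le\mathrm{dsecat}(p)$.

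For the reverse inequality we run the same argument with $(\tilde\alpha')^k_*$ and $\beta$. This requires only that $\beta$ has a homotopy inverse. In fact the argument never uses that $\alpha$ is a homotopy equivalence, only that some map $E'\to E$ covers $\beta'$ up to homotopy. Here the hypothesis on $\alpha$ ensures that $\alpha'$ exists and that $p\alpha'\simeq\beta' p'$. This follows from
\[
p\alpha'\simeq \beta'\beta p\alpha'\simeq\beta' p'\alpha\alpha'\simeq \beta'p'.
\]

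The main obstacle is the rectification step. In the first direction we could avoid it: a section up to homotopy of $\mathcal{B}_k(p')$ could be built directly from $\alpha^k_*$, but only if $\alpha$ itself is fibrewise. When $\alpha$ is not, $\alpha^k_*$ is not even defined, since a measure supported on a single fibre of $p$ must be sent to a measure supported on a single fibre of $p'$. The homotopy lifting step that straightens $\alpha$ to $\tilde\alpha$ is therefore essential, and continuity of $\tilde\alpha^k_*$ in the Lévy--Prokhorov topology must be checked. Here we rely on the functoriality already asserted in the paper. If that continuity needed justification, I would argue via the weak-topology characterization, using that pushforward along a continuous map preserves weak convergence.
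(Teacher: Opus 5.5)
Your proof is correct. The paper gives no argument of its own here; the statement is quoted from \cite[Proposition~5.3]{Jau1}. What you write is the standard proof:
\begin{enumerate}
\item strictify the square with the homotopy lifting property of $p'$ (in the other direction, of $p$, after your check that $p\alpha'\simeq\beta'p'$);
\item push a section forward along $\tilde\alpha^k_*$ to get $\sigma$ with $\mathcal{B}_k(p')\circ\sigma=\beta\beta'\simeq\mathrm{id}_{B'}$;
\item use that $\mathcal{B}_k(p')$ is a fibration to deform this homotopy section into an honest section.
\end{enumerate}
Two minor points. For the lift to start at $\alpha$, you must lift the homotopy running from $p'\alpha$ to $\beta p$, not the reverse. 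The continuity of $\tilde\alpha^k_*$ needs no separability. On any metric space, weak convergence to a finitely supported limit coincides with L\'evy--Prokhorov convergence, and pushforward along a continuous map preserves weak convergence.
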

This result allows us to extend the definition of \(\mathrm{dsecat}(p)\) to maps \(p\) that are not fibrations.  
Indeed, in such a case we can choose a \emph{substitutional fibration} of \(p\), that is, a factorization of the form
\[
\xymatrix{
E \ar[rr]^p \ar[dr]_{h}^{\simeq } & & B \\
 & \overline{E}, \ar[ur]_{\overline{p}} &
}
\]
where \(h\) is a homotopy equivalence and \(\overline{p}\) is a fibration.  
We then define
\[
\mathrm{dsecat}(p) := \mathrm{dsecat}(\overline{p}).
\]
\begin{remark}
By Proposition~\ref{primero}, this definition is independent of the particular choice of substitutional fibration.  
Moreover, the statement of Proposition~\ref{primero} can be extended to arbitrary maps \(p\) and \(p'\) that need not be fibrations.
\end{remark}

\begin{remark}\label{dseact hp inequality}
Again, it is not difficult to verify that Proposition \ref{segundo} remains valid for homotopy pullbacks, even when \(p\) and \(p'\) are not fibrations.
\end{remark}

\bigskip
A basic monotonicity property with respect to morphisms of fibrations also holds.
\begin{proposition}[{\cite[Proposition 3.5]{Dau-J}}]\label{prop: dsecat monotonicity}
Let $p\colon E\to B$ and $p'\colon E'\to B$ fibrations fitting into the homotopy commutative triangle
$$\xymatrix{
{E} \ar[rr] \ar[dr]_p & & {E'} \ar[dl]^{p'} \\
 & {B}. & }$$
Then, $$\mathrm{dsecat}(p')\leq \mathrm{dsecat}(p).$$
\end{proposition}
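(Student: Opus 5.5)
The plan is to use the fibration $\mathcal{B}_{n+1}(p')\colon E_{n+1}(p')\to B$ directly. Set $n=\dsecat(p)$, so that $\mathcal{B}_{n+1}(p)$ admits a section $s\colon B\to E_{n+1}(p)$. From this I will build a section of $\mathcal{B}_{n+1}(p')$; minimality of $\dsecat(p')$ then gives $\dsecat(p')\le n$.

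First I reduce to the case of a strictly commutative triangle. Write $\varphi\colon E\to E'$ for the given map, with $p'\circ\varphi\simeq p$. Let $H\colon E\times I\to B$ be a homotopy with $H_0=p'\circ\varphi$ and $H_1=p$. Since $p'$ is a fibration, the homotopy lifting property applied to $H$ with initial lift $\varphi$ yields $\tilde H\colon E\times I\to E'$ with $\tilde H_0=\varphi$ and $p'\circ\tilde H=H$. The map $\varphi':=\tilde H_1$ then satisfies $p'\circ\varphi'=p$ on the nose, so we may assume $p'\circ\varphi=p$.

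Now the functoriality recalled before Proposition~\ref{pullbacks} applies to the strictly commutative square with top row $\varphi$, vertical maps $p,p'$, and bottom row $\id_B$. It gives a map $\varphi^{n+1}_*\colon E_{n+1}(p)\to E_{n+1}(p')$ satisfying $\mathcal{B}_{n+1}(p')\circ\varphi^{n+1}_*=\mathcal{B}_{n+1}(p)$. Explicitly, this map is the pushforward
\[
\sum a_i\,\delta_{e_i}\mapsto \sum a_i\,\delta_{\varphi(e_i)}.
\]
The composite $\varphi^{n+1}_*\circ s$ is then the desired section of $\mathcal{B}_{n+1}(p')$.

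The only delicate point is continuity of the pushforward on $E_{n+1}(p)$, which is implicit in the paper's functoriality statement. If one wants to check it directly, use the weak-topology description from the preliminary remark when $E$ and $E'$ are separable: for $f\in C_b(E')$ we have $\int f\,d(\varphi_*\mu)=\int (f\circ\varphi)\,d\mu$, and $f\circ\varphi\in C_b(E)$, so weak convergence is preserved. In general, continuity of the pushforward in the Lévy--Prokhorov topology on finitely supported measures follows from continuity of $\varphi$ at the finitely many support points. That the support does not grow beyond $n+1$ points is clear, and neither is the fibre condition an issue, since it holds because $p'\circ\varphi=p$.
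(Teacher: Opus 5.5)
Your proof is correct and is the standard argument. You lift the homotopy $p'\circ\varphi\simeq p$ through the fibration $p'$ to get a strictly commuting triangle, then compose a section of $\mathcal{B}_{n+1}(p)$ with the induced map $\varphi^{n+1}_*\colon E_{n+1}(p)\to E_{n+1}(p')$ over $B$. The paper does not reprove this statement (it is quoted from \cite{Dau-J}), but it relies on exactly this functoriality of $E_k(-)$, for instance when it transports sections via $\omega^{n+1}_*$ in the proof of Lemma~\ref{dsecat-dcat}.
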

\begin{corollary}\label{cor-dsecat-dcat}
Let $p\colon E\to B$ be a fibration with $B$ a path-connected metric space. Then $\mathrm{dsecat}(p)\leq \mathrm{dcat}(B).$ Moreover, if $E$ is contractible, then $$\mathrm{dsecat}(p)= \mathrm{dcat}(B).$$    
\end{corollary}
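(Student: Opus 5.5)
We need to prove Corollary \ref{cor-dsecat-dcat}: $\dsecat(p)\le\dcat(B)$, with equality when $E$ is contractible. The approach is to compare $p$ with the path fibration of $B$ via Proposition \ref{prop: dsecat monotonicity}, using the standard identification $\dcat(B)=\dsecat(ev_1\colon P_0B\to B)$. Here $P_0B$ is the space of paths starting at a base point $b_0\in B$, and $ev_1(\gamma)=\gamma(1)$. If $\dcat$ is defined in the paper's source directly by sections of $\mathcal{B}_{n+1}$ of this path fibration, this is immediate. If instead it is defined via distributed homotopies of $\id_B$ to the constant map, I would note that a section of $\mathcal{B}_{n+1}(ev_1)$ is exactly a continuous assignment $x\mapsto\sum a_i\gamma_i$ of measures on paths from $b_0$ to $x$. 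That is the same data as an $(n+1)$-distributed contraction, after reversing the paths.

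\textbf{Inequality.} Choose $e_0\in p^{-1}(b_0)$; this fibre is nonempty once $p$ admits any section-type structure. If $E=\emptyset$ the statement is vacuous, so assume $E\neq\emptyset$. Since $B$ is path-connected, we may pick $b_0=p(e_0)$. Because $P_0B$ is contractible onto the constant path, the fibration $p$ has the homotopy lifting property. Lift the contracting homotopy $H\colon P_0B\times I\to B$, $H(\gamma,t)=\gamma(t)$, starting at the constant lift $e_0$, where $H(\gamma,0)=b_0$. This yields $\tilde H\colon P_0B\times I\to E$ with $p\circ\tilde H=H$. Then $\phi=\tilde H(-,1)\colon P_0B\to E$ satisfies $p\circ\phi=ev_1$ strictly. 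Proposition \ref{prop: dsecat monotonicity}, applied with the roles $P_0B\to E$ over $B$, gives $\dsecat(p)\le\dsecat(ev_1)=\dcat(B)$. Alternatively, I can bypass that proposition and push a section forward directly: $\phi^{n+1}_*\circ s$ is a section of $\mathcal{B}_{n+1}(p)$ by the functoriality described above. The metric hypothesis on $B$ ensures that $P_0B$, with the sup metric, is a metric space, so $\mathcal{B}_k$ makes sense.

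\textbf{Equality when $E$ is contractible.} Reverse the roles and produce a map $\psi\colon E\to P_0B$ over $B$. Let $K\colon E\times I\to E$ be a contraction with $K(e,0)=e_0$ and $K(e,1)=e$. Set $\psi(e)(t)=p(K(e,t))$; this is a path from $p(e_0)=b_0$ to $p(e)$, so $ev_1\circ\psi=p$. Proposition \ref{prop: dsecat monotonicity} then gives $\dcat(B)=\dsecat(ev_1)\le\dsecat(p)$. One detail needs care: the contraction should fix $e_0$ at time $0$. A free contraction to a point $e_0$ is enough, since then $K(e,0)=e_0$ for all $e$.

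The main obstacle I expect is only definitional. It consists of matching the paper's definition of $\dcat$ with $\dsecat$ of the based path fibration, including the path reversal and the metric on $P_0B$. The two monotonicity applications are routine consequences of the lifting and contraction constructions above.
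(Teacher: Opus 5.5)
Your proof is correct. The inequality is exactly the paper's argument. Your $\phi$, obtained by lifting the tautological contraction $H(\gamma,t)=\gamma(t)$ of $P_0B$ from the constant lift $e_0$, is the paper's map $\omega$ (up to which endpoint of a path is taken as base point), followed by Proposition~\ref{prop: dsecat monotonicity}.

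For the equality you take a different route. The paper keeps the single map $\omega$ and observes that it is a homotopy equivalence because $P_0B$ and $E$ are both contractible. It then applies the homotopy invariance of $\mathrm{dsecat}$ (Proposition~\ref{primero}) to the square with $\omega$ on top and $\mathrm{id}_B$ at the bottom. You instead construct a second map $\psi\colon E\to P_0B$ over $B$ from a contraction of $E$, and use monotonicity once more.

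Your version is more elementary, since it needs only the push-forward of sections rather than the homotopy invariance theorem. It also proves more. Contractibility of $E$ enters only through a null-homotopy of $p$ starting at the constant map $b_0$, so you actually get $\mathrm{dcat}(B)\le\mathrm{dsecat}(p)$ for every null-homotopic fibration $p$ over path-connected $B$. This is the device the paper itself uses later, in the proof of Lemma~\ref{dsecat-dcat}. The paper's version is shorter once homotopy invariance is available, and it needs only the one comparison map $\omega$.

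Two small corrections. First, the case $E=\emptyset$ is not vacuous. There $E_k(p)=\emptyset$ for all $k$, so $\mathrm{dsecat}(p)=\infty$, and the inequality fails whenever $\mathrm{dcat}(B)<\infty$ (e.g.\ $B$ a point). Nonemptiness of $E$ must simply be assumed, as the paper tacitly does when it picks $e_0\in p^{-1}(b_0)$. Second, in the equality part the contraction must be to a point lying over your $b_0$. Since the first half works for any $e_0$, just choose $e_0$ to be the contraction point from the outset.
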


\begin{proof}
Fix a basepoint $b_0 \in B$ and a point $e_0 \in p^{-1}(b_0)$. Since $B$ is path-connected and $p$ is a fibration, lifting each path $\beta \in P_0B$ (via the homotopy lifting property of $p$, ending at $e_0$) yields a well-defined continuous map $\omega: P_0B \to E$ fitting into a commutative diagram
$$\xymatrix{
{P_0B} \ar[rr]^{\omega } \ar[dr]_{p^B} & & {E} \ar[dl]^p \\
& {B}. &  }$$
Therefore, by Proposition~\ref{prop: dsecat monotonicity}, $\mathrm{dsecat}(p)\leq \mathrm{dsecat}(p^B)=\mathrm{dcat}(B).$

Moreover, if $E$ is contractible, both $P_0B$ and $E$ are contractible. Any map between contractible spaces is automatically a homotopy equivalence, so $\omega$ is one. Since the diagram above commutes strictly over $B$ with $\mathrm{id}_B$, Proposition~\ref{primero} applies to $\omega$ and $\mathrm{id}_B$, giving $\mathrm{dsecat}(p)=\mathrm{dsecat}(p^B)=\mathrm{dcat}(B)$.
\end{proof}

We use Corollary~\ref{cor-dsecat-dcat} and a result that $\dcat(\R P^n)=1$ of Dranishnikov and Jauhari \cite[Proposition 6.2]{D-J} to obtain an example of a fibration $p$ for which the strict inequality $\dsecat(p)< \sct(p)$ holds.
\begin{example}
\normalfont{
Consider the double covering
$p\colon S^n \longrightarrow \mathbb{RP}^n$, $n\ge 2$.
Let $x\in H^1(\mathbb{RP}^n;\mathbb{Z}/2)$ denote the canonical generator. Since $p^*(x)=0$,
we have $x\in\ker p^*$. Moreover,
\[
x^n\neq 0\in H^n(\mathbb{RP}^n;\mathbb{Z}/2),
\]
so the classical cohomological lower bound for sectional category yields
\[
\sct(p)\ge \operatorname{cup\text{-}length}(\ker p^*)\ge n.
\]
Since $\sct(p)\le \ct(\mathbb{RP}^n)=n$,
it follows that $\sct(p)=n$.

On the other hand, Corollary~\ref{cor-dsecat-dcat} implies that $\dsecat(p)\le \dcat(\mathbb{RP}^n)=1$.
Since the covering $p$ admits no section, we have $\dsecat(p)\neq 0$. Hence $\dsecat(p)=1$.
Therefore,
\[
\dsecat(p)=1<n=\sct(p),
\]
showing that the gap $\sct(p)-\dsecat(p)$
can be made arbitrarily large.}
\end{example}

\section{Cohomological lower bound using arbitrary coefficients}\label{sec:lower bound in arbitrary coefficients}
In this section, we prove Jauhari's conjecture \cite[Conjecture 5.9]{Jau1}, which concerns a cohomological lower bound for the distributional sectional category. Before proceeding, we prove an auxiliary lemma.

Let $Z$ be a metric space and $\mathcal{B}_n^{(i)}(Z)
=
\left\{
\mu\in\mathcal{B}_n(Z) :
\left|\operatorname{supp}(\mu)\right|=i
\right\}$.
Whenever \(i\) divides \(N\), we define
\[
r_{i,N}\colon
\mathcal{B}_n^{(i)}(Z)
\longrightarrow
SP^N(Z)
\quad  \text{ by } \quad
r_{i,N}(\mu)
:=
\sum_{z\in\operatorname{supp}(\mu)}
\frac{N}{i}\,z.
\]

\begin{lemma}
 The map \(r_{i,N}\) is continuous.
\end{lemma}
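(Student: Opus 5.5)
We argue sequentially, since $\mathcal{B}_n^{(i)}(Z)$ carries the subspace topology of the metrizable L\'evy--Prokhorov topology. We topologize $SP^N(Z)$ as the quotient $Z^N/\Sigma_N$. For a metric space $Z$ this quotient is metrizable, with metric
\[
d_{SP}(x,y)=\min_{\sigma\in\Sigma_N}\max_j d(x_j,y_{\sigma(j)}).
\]
It therefore suffices to show the following: if $\mu_k\to\mu$ in $\mathcal{B}_n^{(i)}(Z)$, then $r_{i,N}(\mu_k)\to r_{i,N}(\mu)$ in this metric. Concretely, we must show that the supports of $\mu_k$ can be labelled so that each labelled point converges to the corresponding point of $\supp(\mu)$.

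\textbf{Step 1 (setup).} Write $\mu=\sum_{j=1}^i a_j z_j$ with distinct $z_j$ and all $a_j>0$. Set
\[
\epsilon<\tfrac13\min_{j\neq l} d(z_j,z_l),
\]
and require also that $\epsilon<\min_j a_j$.

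\textbf{Step 2 (the key estimate).} We use the L\'evy--Prokhorov characterization. For large $k$ we have $\pi(\mu_k,\mu)<\eta$ for any prescribed $\eta\le\epsilon$. Then
\[
\mu_k\bigl(B(z_j,\eta)\bigr)\ge \mu(\{z_j\})-\eta=a_j-\eta>0
\]
for every $j$. So each of the $i$ disjoint balls $B(z_j,\eta)$ contains at least one point of $\supp(\mu_k)$.

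Now $|\supp(\mu_k)|=i$, and the balls are disjoint, so each ball contains exactly one support point. Call it $z_j^{(k)}$. Since $\eta$ was arbitrary, $d(z_j^{(k)},z_j)\to 0$ for each $j$.

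This step uses the hypothesis that the support cardinality is exactly $i$. Without it, support points could merge or escape, and the multiplicities $N/i$ would jump. That is why the map is defined only on the stratum $\mathcal{B}_n^{(i)}(Z)$.

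\textbf{Step 3 (conclusion).} With this labelling, $r_{i,N}(\mu_k)$ is represented by the $N$-tuple in which each $z_j^{(k)}$ is repeated $N/i$ times. Likewise $r_{i,N}(\mu)$ is represented by the tuple repeating each $z_j$ exactly $N/i$ times.

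These tuples converge coordinatewise in $Z^N$. The quotient map $Z^N\to SP^N(Z)$ is continuous, so their images converge. Note that the weights $a_j^{(k)}$ play no role in the definition of $r_{i,N}$, so no control of them is needed.

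\textbf{Main obstacle.} The main point is Step 2: extracting from the L\'evy--Prokhorov bound that each $\eta$-ball around an atom of $\mu$ carries positive $\mu_k$-mass. For this I would use the definition
\[
\mu(A)\le \mu_k(A^\eta)+\eta
\]
with $A=\{z_j\}$. If only the weak-topology description were available, I would instead test $\mu_k$ against a bump function $f\in C_b(Z)$ supported in $B(z_j,\eta)$ with $f(z_j)=1$. This gives $\int f\,d\mu_k\to a_j>0$, which again forces support points into every ball.
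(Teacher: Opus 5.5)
Your proof is correct and follows essentially the paper's argument. Both apply the L\'evy--Prokhorov inequality to the singletons $\{z_j\}$ to force positive mass into each of $i$ disjoint small balls, use $|\supp(\mu_k)|=i$ to get exactly one support point per ball, and then pair the $N/i$ copies of corresponding points. The only difference is presentational: you argue sequentially, which is legitimate since the domain is metrizable, whereas the paper gives a direct $\varepsilon$--$\eta$ estimate in the metric $d_{SP}$.
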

\begin{proof}
Fix
\[
\mu
=
\sum_{j=1}^{i}a_j\delta_{x_j},
\qquad
a_j>0,
\]
where \(x_1,\ldots,x_i\) are pairwise distinct. Given
\(\varepsilon>0\), choose \(\eta>0\) such that
\[
\eta
<
\min\left\{
\varepsilon,\,
\min_{1\leq j\leq i}a_j,\,
\frac{1}{2}
\delta
\right\},
\]
where 
\[
\delta=
\begin{cases}
\displaystyle\min_{j\neq k} d(x_j,x_k), & \text{if } i\ge 2,\\[1ex]
+\infty, & \text{if } i=1.
\end{cases}
\]
Suppose that
\[
\nu\in\mathcal{B}_n^{(i)}(Z)
\qquad\text{and}\qquad
d_{\mathrm{LP}}(\mu,\nu)<\eta.
\]
By the defining property of the L\'evy--Prokhorov metric, applied to the
singleton \(\{x_j\}\), we obtain
\[
a_j
=
\mu(\{x_j\})
\leq
\nu\bigl(B(x_j,\eta)\bigr)+\eta.
\]
Since \(\eta<a_j\), it follows that 
$\nu\bigl(B(x_j,\eta)\bigr)>0$
for every \(j\).
The balls \(B(x_j,\eta)\) are pairwise disjoint. Since \(\nu\) has
exactly \(i\) points in its support, each of these \(i\) balls contains
exactly one support point \(y_j\), and there are no support points
outside their union. Thus, after a suitable labelling,
\[
\operatorname{supp}(\nu)
=
\{y_1,\ldots,y_i\},
\qquad
d(x_j,y_j)<\eta
\]
for every \(j\).

Equip \(Z^N\) with the maximum metric
\[
d_{\infty}
\bigl(
(z_1,\ldots,z_N),(w_1,\ldots,w_N)
\bigr)
=
\max_{1\leq \ell\leq N}d(z_\ell,w_\ell).
\]
The action of \(\Sigma_N\) on \(Z^N\) is by isometries. Since
\(\Sigma_N\) is finite, the metric
\[
d_{SP}
\bigl(
[z_1,\ldots,z_N],
[w_1,\ldots,w_N]
\bigr)
=
\min_{\tau\in\Sigma_N}
\max_{1\leq \ell\leq N}
d\bigl(z_\ell,w_{\tau(\ell)}\bigr)
\]
induces the usual quotient topology on
$SP^N(Z)=Z^N/\Sigma_N$.
Pairing the \(N/i\) copies of \(x_j\) with the \(N/i\) copies of \(y_j\),
we obtain
\[
d_{SP}
\bigl(
r_{i,N}(\mu),
r_{i,N}(\nu)
\bigr)
\leq
\max_{1\leq j\leq i}d(x_j,y_j)
<
\eta
<
\varepsilon.
\]
This proves that \(r_{i,N}\) is continuous.    
\end{proof}

The $k$\emph{-th symmetric product} of a topological space $B$, denoted $SP^k(B)$, is the orbit space of the Cartesian product $X^k$ under the natural permutation action of the symmetric group $S_k$ on $k$-symbols. The elements of $SP^k(B)$ are orbits of elements $(x_1,\dots,x_k)\in B^k$, and any such orbit can be considered as a formal sum $\sum k_ix_i$, where $\sum k_i=k$, subject to the equivalence $(n+\ell)x=nx+\ell x$. 
The map $\delta_k^B\colon X\to SP^k(B)$ is defined by $\delta_k^B(x):=[(x,x,\dots,x)]=kx$ can be thought of as the inclusion. We call it the \emph{symmetric diagonal}.
Suppose $A_i\subset B$. Then consider $j_{i,k}:A_i\to SP^{k}(B)$. Note that $j_{i,k}=\delta_{k}^B\circ inc_{A_i}$, where $inc_{A_i}:A_i\to B$ is the inclusion.

\begin{proposition}\label{lem: dsecat cover}
Let $p\colon E\to B$ be a fibration. Suppose $\mathrm{dsecat}(p)<n$. Then there exists sets $A_1,\dots, A_n$ with $B=\cup_{i=1}^n A_i$ such that there exists
$\sigma_i\colon A_i\to SP^{n!}(E)$ such that $SP^{n!}(p)\circ \sigma_i=j_{i,n!}$.
\end{proposition}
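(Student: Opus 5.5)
Since $\dsecat(p)<n$, i.e.\ $\dsecat(p)\le n-1$, the fibration $\mathcal{B}_n(p)\colon E_n(p)\to B$ admits a continuous section $s\colon B\to E_n(p)$. For each $b\in B$ the measure $s(b)$ lies in $\mathcal{B}_n(p^{-1}(b))$, so its support has cardinality $i\in\{1,\dots,n\}$. I would sort the points of $B$ by this cardinality and set
\[
A_i:=\{\,b\in B : |\operatorname{supp}(s(b))|=i\,\},\qquad i=1,\dots,n.
\]
These sets are pairwise disjoint and cover $B$. The statement only asks for sets, not open sets, so no openness is needed. In any case each $A_i$ is the intersection of an open set with a closed set: the condition $|\operatorname{supp}|\ge i$ is open, because support points cannot merge or vanish under small Lévy--Prokhorov perturbations, as in the argument of the lemma above.

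On $A_i$ the section $s$ takes values in $\mathcal{B}_n^{(i)}(E)$, and $i$ divides $n!$. I would then define
\[
\sigma_i:=r_{i,n!}\circ s|_{A_i}\colon A_i\longrightarrow SP^{n!}(E).
\]
Continuity of $s|_{A_i}$ (with the subspace topology) and of $r_{i,n!}$, established in the preceding lemma with $Z=E$, give continuity of $\sigma_i$. To compute $SP^{n!}(p)\circ\sigma_i$, fix $b\in A_i$ and write $\operatorname{supp}(s(b))=\{e_1,\dots,e_i\}\subseteq p^{-1}(b)$. Then
\[
SP^{n!}(p)\bigl(\sigma_i(b)\bigr)=SP^{n!}(p)\Bigl(\sum_{j=1}^{i}\tfrac{n!}{i}\,e_j\Bigr)=\sum_{j=1}^{i}\tfrac{n!}{i}\,p(e_j)=n!\,b=\delta_{n!}^B(b)=j_{i,n!}(b).
\]
This is exactly the required identity.

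The only step with real content is the continuity of $r_{i,N}$ on the constant-support-cardinality stratum, and that is already supplied by the lemma. The remaining points are bookkeeping. First, $E$ must be metric so that $E_n(p)\subseteq\mathcal{B}_n(E)$ carries the Lévy--Prokhorov topology, which holds in the setting of the definition of $\dsecat$. Second, the weights $a_j$ of $s(b)$ are deliberately discarded: $r_{i,N}$ uses uniform multiplicities $N/i$. This is why we restrict to each stratum $A_i$ separately, since the map would not be continuous across strata. Using $N=n!$ makes the common target $SP^{n!}(E)$ independent of $i$.
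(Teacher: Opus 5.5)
Your proposal is correct and matches the paper's proof essentially step for step. You use the same stratification $A_i=\{b: |\operatorname{supp}(s(b))|=i\}$, the same maps $\sigma_i=r_{i,n!}\circ s|_{A_i}$ with continuity from the preceding lemma, and the same computation $SP^{n!}(p)\circ\sigma_i=j_{i,n!}$. Your side remark that each $A_i$ is locally closed, by lower semicontinuity of support cardinality, is also correct, though the statement does not need it.
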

\begin{proof}
Suppose $\mathrm{dsecat}(p)<n$. Then there exists a section $s$ of $\mathcal{B}_n(p)\colon E_n(p)\to B$.
For each $1\leq i\leq n$, define 
\[
A_i:=\{a\in B \mid |Supp(s(a))|=i\}.
\] Note that these sets cover $B$.
Then define $\sigma_i\colon A_i\to SP^{n!}(E)$ by 
$$\sigma_i(a):= \sum_{e\in Supp(s(a))}\frac{n!}{i}e.$$
We now check the continuity of $\sigma_i$. Note that the restriction of \(s\) gives a continuous map
\[
s|_{A_i}\colon
A_i
\longrightarrow
\mathcal{B}_n^{(i)}(E),
\quad 
\text{ and } \quad
\sigma_i
=
r_{i,n!}\circ s|_{A_i}.
\]
Since \(i\mid n!\), the map \(r_{i,n!}\) is defined, and therefore
\(\sigma_i\) is continuous.

Suppose $Supp(s(a))=\{e_1,\dots,e_i\}$.
Note that $$SP^{n!}(p)\circ \sigma_i(a)=\sum_{j=1}^i\frac{n!}{i}p(e_j)=\sum_{j=1}^i\frac{n!}{i}a=n!a=j_{i,n!}(a).$$
This completes the proof.
\end{proof} 

\begin{remark}
 The arguments in the proof of Proposition~\ref{lem: dsecat cover} are motivated by the corresponding arguments for the distributional Lusternik--Schnirelmann category and distributional topological complexity, which originally appeared in \cite{D-J}. Note that the similar construction also appears in \cite[Lemma 6.1]{O-J}.
\end{remark}

We are now in a position to obtain the cohomological lower bound on the distributional sectional category which Jauhari first conjectured in \cite[Conjecture 5.9]{Jau1}. The cohomological lower bounds for $\dcat$ and $\dTC$ obtained by Dranishnikov and Jauhari in \cite{D-J} in terms of $\dsecat$ were established for the path fibration and the free path space fibration, respectively. Here, we establish these lower bounds for arbitrary fibrations.

We used Alexander--Spanier cohomology \cite{Sp} because the sets $A_i$ in Proposition~\ref{lem: dsecat cover} need not be open or closed, and hence, $(SP^{n!}(B),A_i)$ may not be a CW pair. In the case of metric ANR spaces, however, one can still use singular cohomology as explained in~\cite{D-J}. We use arbitrary coefficients from the commutative ring with unity. For brevity, we will ignore writing the coefficient ring $R$.
\begin{theorem}\label{cohomological lower bound on dsecat}
Let $p\colon E\to B$ be a fibration. Suppose $\beta_i\in H^*(SP^{n!}(B);R)$ such that $SP^{n!}(p)^*(\beta_i)=0$. Let $\alpha_i=(\delta_{n!}^B)^*(\beta_i)$ with $\alpha_1\smile \dots \smile \alpha_n\neq 0$. Then
$$\mathrm{dsecat}(p)\geq n.$$
\end{theorem}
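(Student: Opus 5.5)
We argue by contradiction, following the classical Schwarz cup-length argument for sectional category with Proposition~\ref{lem: dsecat cover} taking the place of an open cover with local sections. Suppose $\dsecat(p)<n$. Proposition~\ref{lem: dsecat cover} then gives subsets $A_1,\dots,A_n$ with $B=\bigcup_i A_i$ and continuous maps $\sigma_i\colon A_i\to SP^{n!}(E)$ satisfying $SP^{n!}(p)\circ\sigma_i=j_{i,n!}=\delta^B_{n!}\circ inc_{A_i}$. We work throughout in Alexander--Spanier cohomology with coefficients in $R$. This theory is defined for arbitrary subspaces, has long exact sequences of pairs, and carries relative cup products
\[
H^*(B,A)\otimes H^*(B,A')\to H^*(B,A\cup A')
\]
for arbitrary subsets $A,A'$. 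This is exactly the reason the paper uses it: the $A_i$ need be neither open nor closed.

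The first step is to show that each $\alpha_i$ restricts to zero on $A_i$. Indeed,
\[
inc_{A_i}^*(\alpha_i)=inc_{A_i}^*(\delta^B_{n!})^*(\beta_i)=j_{i,n!}^*(\beta_i)=\sigma_i^*\,SP^{n!}(p)^*(\beta_i)=0.
\]
By the exact sequence of the pair $(B,A_i)$, $\alpha_i$ therefore lifts to a relative class $\bar\alpha_i\in H^*(B,A_i)$.

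The second step multiplies these lifts. The relative product $\bar\alpha_1\smile\cdots\smile\bar\alpha_n$ lies in $H^*(B,A_1\cup\dots\cup A_n)=H^*(B,B)=0$. By naturality of the cup product with respect to the map $H^*(B,\cdot)\to H^*(B)$, this product maps to $\alpha_1\smile\cdots\smile\alpha_n$. Hence $\alpha_1\smile\cdots\smile\alpha_n=0$, contradicting the hypothesis, and so $\dsecat(p)\ge n$.

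The main obstacle is justifying the relative cup product $H^*(B,A)\otimes H^*(B,A')\to H^*(B,A\cup A')$ for arbitrary, non-excisive subsets. In singular cohomology this requires $\{A,A'\}$ to be an excisive couple, which is why the theorem cannot simply be run there. In Alexander--Spanier cohomology the product is defined at the cochain level by locally zero functions: a function locally zero on $A$ times one locally zero on $A'$ is locally zero on $A\cup A'$. So it exists for all subsets, compatibly with the absolute product; see Spanier~\cite{Sp}, Chapter~6. The one point to check is that this holds without tautness hypotheses. If it turns out to require paracompactness, we will assume $B$ is paracompact, as implicitly elsewhere in the paper.

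We also need Alexander--Spanier cohomology of the possibly non-paracompact $SP^{n!}(B)$ to be functorial for the maps used above. That is automatic, since the theory is contravariantly functorial for all continuous maps.
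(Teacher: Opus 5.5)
Your argument is the paper's argument. It uses the cover $A_1,\dots,A_n$ from Proposition~\ref{lem: dsecat cover}, the vanishing of the kernel classes on each $A_i$, lifts to Alexander--Spanier relative classes, and a relative product that lands in $H^*(B,B)=0$. The only difference is the order of two steps. You restrict $\alpha_i$ to $A_i$ and lift in the pair $(B,A_i)$. The paper instead lifts $\beta_i$ in $(SP^{n!}(B),A_i)$, viewing $A_i\subset SP^{n!}(B)$ via $\delta^B_{n!}$, and then pulls back. Your order is slightly cleaner, since it needs no pair in the symmetric product.

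The one step where you go beyond the paper is your justification of the relative product $H^*(B,A)\otimes H^*(B,A')\to H^*(B,A\cup A')$, and it is not correct as stated. A function locally zero on $A$ times one locally zero on $A'$ is locally zero on $A\cup A'$ when $A$ and $A'$ are open in $A\cup A'$, but not in general. Near a point of $A$ that is a limit of points of $A'\setminus A$, a tuple with $x_0\in A'\setminus A$ and $x_{p+q}\in A\setminus A'$ gives no vanishing information on either factor. This already happens for $A=\{0\}$ and $A'=\{1/k : k\geq 1\}$ in $\mathbb{R}$, with $p=1$ and $q=0$.

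That is exactly the situation here. The union $A_1\cup\dots\cup A_k=\{b : |\operatorname{supp}(s(b))|\le k\}$ is closed, because support size is lower semicontinuous, and points of lower strata are typically limits of points of higher strata.

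The standard repair is tautness. When $B$ is metrizable, or more generally when every open subset of $B$ is paracompact, every subset of $B$ is taut for Alexander--Spanier cohomology (Spanier \cite{Sp}, Ch.~6). Hence $\alpha_i|_{A_i}=0$ yields an open set $U_i\supseteq A_i$ with $\alpha_i|_{U_i}=0$. The classical argument then runs on the open cover $\{U_i\}$, where the relative products do exist.

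Paracompactness of $B$ by itself only guarantees tautness of closed subsets. So with your proposed fallback you would still need an extra argument for the locally closed strata. The paper uses the same relative product without comment, so this is a point to tighten in both proofs rather than a departure from the paper's route.
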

\begin{proof}
Suppose, on the contrary, that $\mathrm{dsecat}(p)<n$. Then using Proposition~\ref{lem: dsecat cover} we have sets $A_1,\dots A_n$ with $B=\cup_{i=1}^n A_i$ and $\sigma_i\colon A_i\to SP^{n!}(E)$ such that $SP^{n!}(p)\circ \sigma_i=j_{i,n!}$.
Then we have $j_{i,n!}^*(\beta_i)=\sigma_i^*\circ SP^{n!}(p)^*(\beta_i)=0$. Therefore, $\beta_i\in \mathrm{ker}(j_{i,n!}^*)$. Then, using the long exact sequence of Alexander-Spanier cohomology of the pair $(SP^{n!}(B),A_i)$, 
\[
\xymatrix{
\cdots \ar[r] & H^{m_i}(SP^{n!}(B),A_i) \ar[r]^{q_i^*}  & H^{m_i}(SP^{n!}(B)) \ar[r]^{ \ \ j_{i,n!}^*}  & H^{m_i}(A_i) \ar[r] & \cdots
}
\]
we obtain $\beta_i'\in H^{m_i}(SP^{n!}(B),A_i)$ such that $q_i^*(\beta_i')=\beta_i$. 
Let $\alpha_i'=(\delta_{n!}^{B})^*(\beta_i')\in H^{m_i}(B,A_i)$.
Consider the following commutative diagram in which $q$ and $q'$ are maps induced from $q_i$'s and $q_i'$'s, respectively.
\[
\xymatrix{
H^m(SP^{n!}(B), \cup_{i=1}^n A_i) \ar[r]^{q} \ar[d]_{(\delta_{n!}^B)^*} & H^m(SP^{n!}(B)) \ar[d]^{(\delta_{n!}^B)^*} \\
0=H^m(B,\cup_{i=1}^n A_i) \ar[r]_{q'} & H^m(B).
}
\]
Note from the above diagram $(\delta_{n!}^B)^*\circ q(\beta_1'\smile \dots\smile \beta_n')=\alpha_1\smile \dots \smile \alpha_n$. But $(\delta_{n!}^B)^*\circ q(\beta_1'\smile \dots\smile \beta_n')=q'\circ (\delta_{n!}^B)^*(\beta_1'\smile \dots\smile \beta_n')=q'(\alpha_1'\smile \dots \smile \alpha_n')=0$ as $\alpha_1'\smile \dots \smile \alpha_n'\in H^m(B,\cup_{i=1}^n A_i)=0$.  This gives $\alpha_1\smile \dots \smile \alpha_n=0$, a contradiction to our hypothesis. This completes the proof.
\end{proof}

\begin{remark}
The cohomological lower bound established in Theorem~\ref{cohomological lower bound on dsecat} recovers the corresponding cohomological lower bounds for the distributional Lusternik--Schnirelmann category and the distributional topological complexity obtained by Dranishnikov and Jauhari in \cite{D-J}. We briefly explain this below.

Suppose $p=\pi_X\colon PX\to X\times X$ is the free path fibration. Then $\mathrm{dsecat}(\pi_X)=\mathrm{dTC}(X)$.
Assume that there exist classes $\beta_i\in H^*(SP^{n!}(X\times X))$ such that $SP^{n!}(\pi_X)^*(\beta_i)=0$. Let
$\alpha_i=(\delta^{X\times X}_{n!})^*(\beta_i)$,
and suppose that
$\alpha_1\smile\cdots\smile\alpha_n\neq 0.$
Since $SP^{n!}(\pi_X)^*(\beta_i)=0$, we have
\[
\pi_X^*(\alpha_i)
=\pi_X^*\circ(\delta^{X\times X}_{n!})^*(\beta_i)
=(\delta^{PX}_{n!})^*\circ SP^{n!}(\pi_X)^*(\beta_i)
=0.
\]
Furthermore, $\ker(\Delta_X^*)=\ker(\pi_X^*)$,
where $\Delta_X\colon X\to X\times X$ denotes the diagonal map. Hence each $\alpha_i$ is a zero-divisor for $X$. Therefore, Theorem~\ref{cohomological lower bound on dsecat} yields
\[
\mathrm{dTC}(X)\geq n,
\]
which is precisely the cohomological lower bound for $\mathrm{dTC}(X)$ proved in \cite[Theorem~4.12]{D-J}.

Similarly, by considering the based path fibration $e_X\colon P_0X\to X$, one recovers the corresponding cohomological lower bound for the distributional Lusternik--Schnirelmann category,
$\mathrm{dcat}(X)=\mathrm{dsecat}(e_X)$,
established in \cite[Theorem~4.7]{D-J}.
\end{remark}

\section{Cohomological lower bound using rational coefficients}\label{sec: cohomological lower bound with rational coeff}
The cohomological lower bounds for sectional category, and hence for
topological complexity and Lusternik--Schnirelmann category, rest on the zero-divisor mechanism: a product of classes in the kernel of a
restriction homomorphism witnesses a lower bound. In the distributional
setting of Dranishnikov and Jauhari \cite{D-J}, the
relevant witnesses no longer live on the base $B$ itself but on its
symmetric products $\SP^{m}(B)$, and the criterion is phrased in terms of
the symmetric diagonal $\sd^B_m\colon B\to\SP^m(B)$. This raises a
concrete difficulty: the kernel one controls in practice is
$\ker p^*\subseteq H^*(B;\Q)$ downstairs, whereas the criterion asks for
classes in $\ker\SP^m(p)^*$ upstairs, an object that is hard to access
directly.

The purpose of this section is to bridge that gap once and for all, by
constructing a \emph{natural} section of $(\sd^X_m)^*$ and deriving from
it an exact identification of the two kernels. The construction is
elementary: it averages the pullbacks of the coordinate projections. Its
force comes entirely from naturality.

The integral and mod-$p$ cohomology of symmetric products was first
studied by Nakaoka \cite{Nakaoka1957} and Dold \cite{Dold1958}; here we
need only the rational orbit isomorphism
(Proposition~\ref{prop:orbitfin}, Theorem~\ref{thm:orbitcw}), which
follows directly from the transfer theorem.

\textbf{Relation to existing results: }
The induced homomorphism $(\sd^X_m)^*$ is surjective with rational coefficients is
known: for $X$ a finite simplicial complex it is
\cite[Prop.~4.3]{D-J}, proved there by a different route,
via the Dold--Thom theorem and the fact that multiplication by $m$ is a
rational homology isomorphism of $\SP^\infty(X)$. Surjectivity alone
yields, for each space separately, a section of $(\sd^X_m)^*$ chosen by
an arbitrary splitting. That is \emph{not} sufficient for the kernel
identity: what the argument of Section~\ref{sec:kernel} requires is a
section that is natural for every continuous map, so that the lift of a
class in $\ker p^*$ automatically lands in $\ker\SP^m(p)^*$
(Remark~\ref{rem:naturality}). Supplying such a section, by explicit
symmetrisation, is the contribution of Section~\ref{sec:splitting}.

The cohomological lower bounds for $\dcat$ and $\dTC$ of
\cite[Thms.~4.7 and~4.12]{D-J} take the classes upstairs
as given and push them down. The present note goes the other way: it
lifts a prescribed kernel downstairs to the kernel upstairs, which is the
direction needed when what one controls is $\ker p^*$.

\textbf{Conventions :}
All cohomology is singular cohomology with coefficients in $\Q$, unless a
different coefficient field is displayed. For a space $X$ and an integer
$m\ge 1$, the symmetric group $\Sig_m$ acts on $X^m$ by
\[
\sigma\cdot(x_1,\dots,x_m)=(x_{\sigma^{-1}(1)},\dots,x_{\sigma^{-1}(m)}),
\]
the $m$th symmetric product is the orbit space $\SP^m(X)=X^m/\Sig_m$, and
\[
q_X\colon X^m\longrightarrow\SP^m(X),\qquad
q_X(x_1,\dots,x_m)=[x_1,\dots,x_m]
\]
is the orbit map. We write $\pr^X_j\colon X^m\to X$ for the $j$th
coordinate projection, $\diag_X\colon X\to X^m$ for the Cartesian
diagonal, and
\[
\sd^X_m=q_X\circ\diag_X\colon X\longrightarrow\SP^m(X),\qquad
\sd^X_m(x)=[x,\dots,x],
\]
for the symmetric diagonal. Every map $f\colon X\to Y$ induces
$\SP^m(f)\colon\SP^m(X)\to\SP^m(Y)$, $[x_1,\dots,x_m]\mapsto
[f(x_1),\dots,f(x_m)]$, and the following identities are strict:
\begin{equation}\label{eq:strict}
q_Y\circ f^m=\SP^m(f)\circ q_X,\qquad
\sd^Y_m\circ f=\SP^m(f)\circ\sd^X_m.
\end{equation}
A space is of \emph{locally finite simplicial homotopy type} if it is homotopy
equivalent to the geometric realisation of a locally finite simplicial complex. This is the standing hypothesis of
subsections~\ref{sec:homotopy}--\ref{sec:cuplength}. A space is of
\emph{locally finite CW type} if it has the homotopy type of a locally finite CW complex; such a space is homotopy equivalent to a metric ANR, and this stronger hypothesis is needed only in Section~\ref{sec:dsecat},
where the distributional lower bound is invoked.

With the above background, we are now ready to state our main theorem.

\begin{theorem}\label{thm:main}
For every $m\ge1$ and every space $X$ of locally finite simplicial homotopy type, there
is a natural graded $\Q$-linear map
\[
s^X_m\colon H^*(X;\Q)\longrightarrow H^*(\SP^m(X);\Q)
\]
with $(\sd^X_m)^*\circ s^X_m=\id$. Consequently, for every continuous
map $p\colon E\to B$ between spaces of locally finite simplicial homotopy type,
\[
(\sd^B_m)^*\bigl(\ker\SP^m(p)^*\bigr)=\ker p^*,
\]
and the restriction $(\sd^B_m)^*\colon\ker\SP^m(p)^*\to\ker p^*$ is a
split epimorphism with section $s^B_m|_{\ker p^*}$. If moreover $p$ is a
fibration between path-connected spaces of \emph{locally finite CW type},
then
\[
\dsecat(p)\ge\cl_\Q(\ker p^*).
\]
\end{theorem}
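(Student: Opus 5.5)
The plan has three stages: construct $s^X_m$ by symmetrisation, deduce the kernel identity from naturality, and feed the result into Theorem~\ref{cohomological lower bound on dsecat}.

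\textbf{Construction of $s^X_m$.} I would use the rational orbit isomorphism via transfer. For a finite group acting on a (locally finite simplicial) space, $q_X^*\colon H^*(\SP^m(X);\Q)\to H^*(X^m;\Q)^{\Sig_m}$ is an isomorphism. This is the content of Proposition~\ref{prop:orbitfin}/Theorem~\ref{thm:orbitcw}, obtained from the transfer $\tau$ with $q_X^*\tau=\sum_{\sigma}\sigma^*$ and $\tau q_X^*=m!$. Given $x\in H^*(X)$, I form the symmetrised class
\[
\Phi(x)=\frac1m\sum_{j=1}^m(\pr^X_j)^*x\in H^*(X^m)^{\Sig_m}.
\]
It is invariant because $\Sig_m$ permutes the projections. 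I then set $s^X_m(x)=(q_X^*)^{-1}\Phi(x)$. Since $\diag_X^*(\pr^X_j)^*=\id$, we get $(\sd^X_m)^*s^X_m(x)=\diag_X^*q_X^*s^X_m(x)=\diag_X^*\Phi(x)=x$, so $s^X_m$ is a section.

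Naturality for $f\colon X\to Y$ follows from $q_Y\circ f^m=\SP^m(f)\circ q_X$ in \eqref{eq:strict} and $\pr^Y_j\circ f^m=f\circ\pr^X_j$. Indeed, $q_X^*\SP^m(f)^*s^Y_m(y)=(f^m)^*\Phi_Y(y)=\Phi_X(f^*y)=q_X^*s^X_m(f^*y)$, and injectivity of $q_X^*$ gives $\SP^m(f)^*s^Y_m=s^X_m f^*$.

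\textbf{Kernel identity.} The inclusion $\subseteq$ uses $(\sd^E_m)^*\SP^m(p)^*=p^*(\sd^B_m)^*$. For $\supseteq$, take $x\in\ker p^*$; then $\SP^m(p)^*s^B_m(x)=s^E_m(p^*x)=0$, so $s^B_m(x)$ lies in $\ker\SP^m(p)^*$ and maps to $x$. This also shows the restriction is a split epimorphism with section $s^B_m|_{\ker p^*}$.

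\textbf{The bound $\dsecat(p)\ge\cl_\Q(\ker p^*)$.} Suppose $\alpha_1\smile\cdots\smile\alpha_n\ne0$ with $\alpha_i\in\ker p^*$. Put $m=n!$ and $\beta_i=s^B_{n!}(\alpha_i)$. Then $\SP^{n!}(p)^*\beta_i=0$ and $(\sd^B_{n!})^*\beta_i=\alpha_i$, so Theorem~\ref{cohomological lower bound on dsecat} gives $\dsecat(p)\ge n$. Two hypotheses need care here. First, that theorem is stated in Alexander--Spanier cohomology, but for paracompact metric ANRs this agrees with singular cohomology. Second, $\SP^{n!}$ of a metric ANR should again be suitable. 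I would replace $p$ by a homotopy-equivalent fibration between metric ANRs (Proposition~\ref{primero}); locally finite CW complexes are metrizable ANRs.

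\textbf{Main obstacle.} I expect the main obstacle to be the orbit isomorphism for $\SP^m(X)$ when $X$ is only of locally finite simplicial homotopy type. The plan is to reduce to an actual locally finite simplicial complex $K$, with $\SP^m$ preserving homotopy equivalences. Then I would triangulate $K^m$ $\Sig_m$-equivariantly (after barycentric subdivision) so that $\SP^m(K)$ is the simplicial quotient, and apply the classical simplicial transfer.
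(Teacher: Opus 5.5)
Your proposal is correct and follows the paper's proof essentially step for step. It uses the same averaged class $\frac1m\sum_j(\pr^X_j)^*u$, inverted through the transfer-based orbit isomorphism $H^*(\SP^m(X);\Q)\cong H^*(X^m;\Q)^{\Sig_m}$ (proved on a locally finite simplicial model and transported by homotopy invariance of $\SP^m$), the same naturality argument for the kernel identity, and the same application of Theorem~\ref{cohomological lower bound on dsecat} with $m=n!$ on a locally finite CW model. The steps you only sketch are the ones the paper writes out: $\SP^m$ preserves homotopies because $q_X\times\id_I$ is a quotient map ($q_X$ being open), and two barycentric subdivisions of the order complex of the product cell structure on $|K|^m$ are needed for Bredon regularity, since $\Sig_m$ stabilises diagonal product cells without fixing them pointwise.
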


No fibration hypothesis is used for the kernel identity $(\sd^B_m)^*\bigl(\ker\SP^m(p)^*\bigr)=\ker p^*$. The application to $\dsecat $ in Section 4.7 uses in addition the homotopy invariance of $\dsecat $, in order to pass to a locally finite CW model.

=====================================================================
\subsection{Symmetric products preserve homotopies}
\label{sec:homotopy}

We first record a point-set fact that lets us transport the
polyhedral computation of Section~\ref{sec:orbit} to spaces of
simplicial homotopy type.

\begin{lemma}\label{lem:open}
For every action of a group $G$ on a space $Z$ by homeomorphisms, the
orbit map $q\colon Z\to Z/G$ is open.
\end{lemma}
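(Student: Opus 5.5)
The plan is to verify openness directly from the definition of the quotient topology. Recall that a subset $V\subseteq Z/G$ is open exactly when $q^{-1}(V)$ is open in $Z$. So let $U\subseteq Z$ be open. We must show that $q(U)$ is open in $Z/G$, and it suffices to check that $q^{-1}(q(U))$ is open in $Z$.

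The key step is to identify this saturation explicitly. A point $z$ lies in $q^{-1}(q(U))$ precisely when its orbit meets $U$, that is, when $z=g\cdot u$ for some $g\in G$ and $u\in U$. Hence
\[
q^{-1}\bigl(q(U)\bigr)=\bigcup_{g\in G} gU .
\]
Each translation $z\mapsto g\cdot z$ is a homeomorphism of $Z$, by hypothesis, so each $gU$ is open. A union of open sets is open, so $q^{-1}(q(U))$ is open, and therefore $q(U)$ is open.

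There is no genuine obstacle here. The only point needing care is that the argument uses only that each $g$ acts by a homeomorphism. No finiteness of $G$, no Hausdorffness, and no continuity of the action map $G\times Z\to Z$ is required. This generality matters later, because the lemma will be applied to $\Sig_m$ acting on $X^m$ and on $X^m\times I$ in order to show that $\SP^m$ preserves homotopies. There, openness of $q$ is what shows that $q\times\id_I$ is again a quotient map, since a product of an open quotient map with an identity map is an open surjection and hence a quotient map.
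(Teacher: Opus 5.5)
Your proof is correct and is the paper's argument: the saturation $q^{-1}(q(U))=\bigcup_{g\in G} gU$ is open because each $g$ acts by a homeomorphism, so $q(U)$ is open by the definition of the quotient topology. Your remark that openness is what makes $q_X\times\id_I$ a quotient map also matches how the paper uses the lemma in Proposition~\ref{prop:homotopy}.
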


\begin{proof}
If $U\subseteq Z$ is open, then $q^{-1}(q(U))=\bigcup_{g\in G}gU$ is
open; by definition of the quotient topology, $q(U)$ is open.
\end{proof}

\begin{proposition}\label{prop:homotopy}
If $f,g\colon X\to Y$ are homotopic, then $\SP^m(f)$ and $\SP^m(g)$ are
homotopic. Consequently $\SP^m$ sends homotopy equivalences to homotopy
equivalences.
\end{proposition}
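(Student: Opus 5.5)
Given a homotopy $H\colon X\times I\to Y$ from $f$ to $g$, the plan is to build a homotopy $\SP^m(X)\times I\to \SP^m(Y)$ by the obvious formula
\[
\widetilde H\bigl([x_1,\dots,x_m],t\bigr)=[H(x_1,t),\dots,H(x_m,t)].
\]
This is well defined, since permuting the $x_j$ permutes the $H(x_j,t)$, and at $t=0,1$ it restricts to $\SP^m(f)$ and $\SP^m(g)$. Continuity is the only issue. The composite $X^m\times I\to Y^m\to \SP^m(Y)$, namely $(x_1,\dots,x_m,t)\mapsto q_Y(H(x_1,t),\dots,H(x_m,t))$, is plainly continuous. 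It is constant on orbits of the $\Sig_m$-action on $X^m\times I$ that is trivial on the $I$ factor. So $\widetilde H$ is continuous provided the map
\[
q_X\times\id_I\colon X^m\times I\longrightarrow \SP^m(X)\times I
\]
is a quotient map.

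This step is the main obstacle: products of quotient maps with a space need not be quotient maps in general. Two standard routes are available, and I would use Lemma~\ref{lem:open}. The map $q_X$ is open, so $q_X\times\id_I$ is a product of open surjections and hence an open continuous surjection. An open continuous surjection is automatically a quotient map: if $V\subseteq \SP^m(X)\times I$ has open preimage $W$, then $V=(q_X\times\id_I)(W)$ is open. Alternatively, one can note that $I$ is locally compact Hausdorff, so $-\times I$ preserves quotient maps (Whitehead's lemma). Either way, $\widetilde H$ descends to a continuous map, and $\SP^m(f)\simeq\SP^m(g)$.

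The consequence follows from functoriality. $\SP^m$ is a functor: $\SP^m(g\circ f)=\SP^m(g)\circ\SP^m(f)$ and $\SP^m(\id)=\id$, which is immediate from the formula $[x_j]\mapsto[f(x_j)]$. Now let $f\colon X\to Y$ have homotopy inverse $g$. Then $\SP^m(g)\circ\SP^m(f)=\SP^m(g\circ f)\simeq\SP^m(\id_X)=\id$ by the first part, and symmetrically in the other order. Hence $\SP^m(f)$ is a homotopy equivalence.
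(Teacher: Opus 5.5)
Your proposal is correct and follows the paper's proof: you define the homotopy on $X^m\times I$, note that it is $\Sigma_m$-invariant, and descend it to $\SP^m(X)\times I$ because $q_X\times\id_I$ is an open surjection (Lemma~\ref{lem:open}) and hence a quotient map; the claim about homotopy equivalences then follows by functoriality. Your alternative justification via local compactness of $I$ (Whitehead's lemma) is also valid, though not needed.
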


\begin{proof}
Let $H\colon X\times I\to Y$ be a homotopy from $f$ to $g$, and define
$F\colon X^m\times I\to\SP^m(Y)$ by
\[
F(x_1,\dots,x_m,t)=[H(x_1,t),\dots,H(x_m,t)].
\]
For each fixed $t$ the map is invariant under the coordinate-permutation
action of $\Sig_m$; indeed $F(\sigma\cdot(x_1,\dots,x_m),t)=
F(x_1,\dots,x_m,t)$ for all $\sigma\in\Sig_m$. By Lemma~\ref{lem:open},
$q_X$ is an open surjection, hence so is
$q_X\times\id_I\colon X^m\times I\to\SP^m(X)\times I$, which is therefore
a quotient map. The universal property of quotient maps produces a unique
continuous map
\[
\widehat H\colon\SP^m(X)\times I\longrightarrow\SP^m(Y),\qquad
\widehat H([x_1,\dots,x_m],t)=[H(x_1,t),\dots,H(x_m,t)].
\]
At $t=0$ and $t=1$ this is $\SP^m(f)$ and $\SP^m(g)$ respectively, so
$\SP^m(f)\simeq\SP^m(g)$. If $h$ is a homotopy equivalence with inverse
$k$, then $\SP^m(k)\SP^m(h)=\SP^m(kh)\simeq\SP^m(\id)=\id$, and
symmetrically, so $\SP^m(h)$ is a homotopy equivalence.
\end{proof}

\begin{remark}\label{rem:nofalseiden}
Note that the proof of Proposition \ref{prop:homotopy} does not identify $\SP^m(X\times I)$ with
$\SP^m(X)\times I$; such an identification is false in general. For
instance $\SP^2(\ast\times I)=\SP^2(I)$ is a triangle, whereas
$\SP^2(\ast)\times I\cong I$. The correct tool is that
$q_X\times\id_I$ is a quotient map, which follows from openness of $q_X$.
\end{remark}

\subsection{The orbit-cohomology isomorphism in the polyhedral case}
\label{sec:orbit}

The key input is the classical transfer theorem for finite simplicial
group actions. We do not reprove it; we recall the precise statement in
the form needed here and verify that our action satisfies its
hypotheses.

\subsubsection{Regular simplicial actions and the transfer}
\label{subsec:regular}

Let a finite group $G$ act simplicially on a simplicial complex. Bredon
isolates two conditions on such an action
\cite[Ch.~III, S1, pp.~115--116]{Bredon1972}:
\begin{itemize}
\item[\textup{(A)}] for every $g\in G$ and every simplex $s$, the
  element $g$ leaves $s\cap g(s)$ pointwise fixed; equivalently, if $v$
  and $g(v)$ lie in a common simplex then $v=g(v)$;
\item[\textup{(B)}] if $(v_0,\dots,v_n)$ and $(g_0v_0,\dots,g_nv_n)$ are
  both simplices, then $g(v_i)=g_i(v_i)$ for all $i$, for some $g\in G$.
\end{itemize}
An action is \emph{regular}, and the complex a \emph{regular
$G$-complex}, when \textup{(B)} holds for every subgroup of $G$
\cite[Ch.~III, Def.~1.2]{Bredon1972}. Condition \textup{(A)} is the
``no-inversions'' requirement: a group element stabilising a simplex
setwise fixes it pointwise.

Two facts from \cite[Ch.~III, S1]{Bredon1972} are used below. 
\begin{enumerate}
\item For any simplicial $G$-complex $K$, the induced action on the barycentric
subdivision $K'$ satisfies \textup{(A)}; and if $K$ already satisfies
\textup{(A)}, then $K'$ is regular \cite[Ch.~III, Prop.~1.1]{Bredon1972}.
Hence every simplicial action becomes regular after the second
barycentric subdivision, see \cite[Ch.~III, p.~117]{Bredon1972}. 
\item For a
regular $G$-complex $K$ the orbit space $K/G$ is again a simplicial
complex whose simplices are the $G$-orbits of simplices of
$K$; the orbit map is simplicial, and there is a canonical homeomorphism
$|K|/G\cong|K/G|$, see \cite[Ch.~III, S1, p.~117]{Bredon1972}.
\end{enumerate}

\begin{lemma}[Transfer for finite simplicial actions]
\label{lem:transfer}
Let $G$ be a finite group acting simplicially and regularly on a
simplicial complex $K$, and let $k$ be a field of characteristic $0$ or
prime to $|G|$. Then the orbit map $q\colon|K|\to|K|/G$ induces an
isomorphism of graded $k$-algebras
\begin{equation}\label{eq:transfer}
q^*\colon H^*(|K|/G;k)\iso H^*(|K|;k)^{G},
\end{equation}
where $H^*(|K|;k)^{G}$ is the subalgebra of $G$-invariant classes.
\end{lemma}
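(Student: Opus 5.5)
The plan is the standard simplicial transfer argument. Write $K'=K/G$ for the orbit complex from fact~(2), identified with $|K|/G$, so that $q\colon |K|\to |K'|$ is simplicial. On simplicial cochains with coefficients in $k$, I define a transfer $\tau\colon C^*(K;k)\to C^*(K';k)$ as follows. For a cochain $c$ and an oriented simplex $\sigma'$ of $K'$, set $\tau(c)(\sigma')=\sum_{\sigma} c(\sigma)$, where $\sigma$ runs over the simplices of $K$ lying over $\sigma'$. Regularity enters here in two ways:
\begin{itemize}
\item By condition (A), $q$ restricts to a homeomorphism from each simplex onto its image.
\item The preimages of $\sigma'$ form a single $G$-orbit, so $\tau$ is well defined.
\end{itemize}
I then verify $\tau\delta=\delta\tau$. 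This reduces to checking that, for a face $\rho'$ of $\sigma'$, every simplex over $\rho'$ is a face of the same number of simplices over $\sigma'$. Condition (B), applied to stabiliser subgroups, is exactly what guarantees this uniformity.

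Next I compute the two composites. First, $\tau\circ q^\#$ is multiplication by $|G|/|G_\sigma|$ summed appropriately; more cleanly, it sends $c$ to $\sum_{\sigma\mapsto\sigma'} c(q\sigma)$. Since the preimages of $\sigma'$ form one orbit, their number is $[G:G_\sigma]$, and this divides $|G|$. So $\tau q^\#$ is multiplication by these orbit sizes, simplex by simplex. To avoid non-constant factors, I instead use $\tilde\tau(c)(\sigma')=\sum_{g\in G} c(g\sigma)$ for a fixed lift $\sigma$. This does not depend on the lift, and it commutes with $\delta$ because $q$ is simplicial and injective on each simplex. With this choice, $q^\#\tilde\tau=\sum_{g} g^*$ and $\tilde\tau q^\# = |G|\cdot\id$.

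Passing to cohomology, both $|G|$ and the norm map behave well because $|G|$ is invertible in $k$. From $\tilde\tau q^*=|G|$, the map $q^*$ is injective. On invariant classes, $q^*\tilde\tau=\sum_g g^*=|G|\cdot\id$, so the image of $q^*$ is exactly $H^*(|K|;k)^G$. Here I also use that $q\circ g=q$, which shows the image is contained in the invariants.

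Finally, $q^*$ is a ring map, so the resulting isomorphism is one of graded algebras. I identify simplicial with singular cohomology by naturality of the standard comparison isomorphism for the simplicial map $q$. For infinite (locally finite) complexes, simplicial cochains are arbitrary functions on simplices, so nothing changes.

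The main obstacle is the chain-level compatibility $\tilde\tau\delta=\delta\tilde\tau$, specifically orientation and sign issues when $g\sigma=\sigma$ setwise. Condition (A) removes these, since such a $g$ fixes $\sigma$ pointwise and therefore preserves orientation. I would check this incidence-number bookkeeping carefully, citing \cite[Ch.~III, Thm.~2.4]{Bredon1972} as the reference form of the argument.
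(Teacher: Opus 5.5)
Your argument is correct and is essentially the paper's: the paper cites Bredon's transfer with $\mu^*q^*=|G|\cdot\mathrm{id}$ and $q^*\mu^*=\sum_{g\in G}g^*$ and draws the same conclusions (injectivity, image equal to the invariants, multiplicativity from $q$ being a map). Your $\tilde\tau(c)(\sigma')=\sum_{g\in G}c(g\sigma)$ is the cochain dual of Bredon's chain transfer $\sigma'\mapsto\sum_{g}g\sigma$, with (B) giving independence of the ordered lift and (A) handling orientations. One correction: drop your first candidate $\tau$ (the unweighted sum over the distinct simplices above $\sigma'$) outright rather than merely superseding it, since it is not a cochain map in general. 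For the reflection of $[-1,1]$ subdivided at $0$, one gets $(\tau\delta c-\delta\tau c)(e')=c(0)$ on the orbit edge $e'$, and (B) does not repair this: the count you describe is automatically constant on orbits, but it would have to equal $1$.
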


\begin{proof}
This is the cohomological form of the transfer theorem
\cite[Ch.~III, Thm.~2.4 and p.~121]{Bredon1972}, in the case of an empty
subcomplex. Write $\mu^*\colon H^*(K;k)\to H^*(K/G;k)$ for the
cohomological transfer. It satisfies the two identities
\begin{equation}\label{eq:transferids}
\mu^*q^*=|G|\cdot\id,\qquad q^*\mu^*=\sum_{g\in G}g^*.
\end{equation}
The first shows that $q^*$ is injective and that $|G|^{-1}\mu^*$ is a
left inverse; the coefficient hypothesis is what makes the factor
$|G|^{-1}$ available. For surjectivity onto the invariant subalgebra,
let $x\in H^*(K;k)^{G}$. Since $g^*x=x$ for every $g\in G$, the second
identity in \eqref{eq:transferids} gives
$q^*\mu^*(x)=\sum_{g\in G}g^*x=|G|\,x$, whence
$x=q^*\bigl(|G|^{-1}\mu^*x\bigr)$. Thus $x\in\im q^*$, and $q^*$ is an
isomorphism onto $H^*(K;k)^{G}$ with inverse $|G|^{-1}\mu^*$. Being
induced by a continuous map, $q^*$ respects cup products, so
\eqref{eq:transfer} is an isomorphism of graded algebras. The
homeomorphism $|K|/G\cong|K/G|$ identifies it with the stated map.
\end{proof}

\begin{remark}
The transfer in this form is due to Floyd and Conner; see the attribution
in \cite[Ch.~III, S2]{Bredon1972}. We use only the case of a single
finite group with coefficients in a field in which $|G|$ is invertible.
\end{remark}

\subsubsection{Application to symmetric products of simplicial complexes}
\label{subsec:symprod}

\begin{proposition}[Orbit isomorphism for a locally finite simplicial complex]
\label{prop:orbitfin}
Let $K$ be a locally finite simplicial complex and $m\ge 1$. The map $q_K^*$ induced by
the orbit map $q_K\colon|K|^{m}\to\SP^{m}(|K|)$ is injective with image
the invariant subalgebra, so that it corestricts to an isomorphism of
graded rational algebras
\begin{equation}\label{eq:orbitfin}
\bar q_K^*\colon H^*\!\bigl(\SP^{m}(|K|);\Q\bigr)\iso
H^*\!\bigl(|K|^{m};\Q\bigr)^{\Sig_m}.
\end{equation}
\end{proposition}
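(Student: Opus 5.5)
The plan is to realise $|K|^m$ as the geometric realisation of a simplicial complex on which $\Sig_m$ acts simplicially, regularise the action by subdivision, and then apply Lemma~\ref{lem:transfer} with $G=\Sig_m$ and $k=\Q$.

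First we fix a total order on the vertices of $K$. The product $|K|^m$ is then triangulated by the ordered simplicial product $K^{\times m}$. Its vertices are $m$-tuples of vertices of $K$. Its simplices are the chains $(v^0_1,\dots,v^0_m)<\dots<(v^r_1,\dots,v^r_m)$ in the product order that are coordinatewise nondecreasing and whose coordinate sets $\{v^0_j,\dots,v^r_j\}$ span simplices of $K$. Since $K$ is locally finite, the realisation of this complex is homeomorphic to the product space $|K|^m$; local finiteness is exactly what guarantees that the CW/simplicial topology agrees with the product topology. Coordinate permutation preserves the product order and the simplex condition, so $\Sig_m$ acts simplicially on $K^{\times m}$, and the homeomorphism is equivariant.

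Next we pass to the second barycentric subdivision $L=(K^{\times m})''$. By fact (1) quoted from Bredon, $L$ is a regular $\Sig_m$-complex, and $|L|=|K^{\times m}|\cong|K|^m$ equivariantly. By fact (2), $|L|/\Sig_m\cong|L/\Sig_m|$, and the orbit map $|L|\to|L|/\Sig_m$ corresponds to $q_K$ under these identifications, because $\SP^m(|K|)$ is by definition $|K|^m/\Sig_m$ with the quotient topology. Lemma~\ref{lem:transfer} then gives that $q_K^*$ is injective with image $H^*(|K|^m;\Q)^{\Sig_m}$, and that it is multiplicative. Corestricting yields the isomorphism \eqref{eq:orbitfin}.

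The main obstacle is a point-set check. We must verify that the homeomorphism $|K^{\times m}|\cong|K|^m$ holds for infinite but locally finite $K$. We must also check that the quotient topology on $|K|^m/\Sig_m$ agrees with the topology on $|L/\Sig_m|$ used by Bredon. For the first point, local finiteness makes both spaces locally compact, so the weak topology coincides with the product topology. For the second, the orbit map is open by Lemma~\ref{lem:open}, and $\Sig_m$ is finite, so the two quotient topologies coincide.

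A secondary concern is that Bredon's transfer theorem is stated for possibly infinite complexes using simplicial cohomology. Singular cohomology agrees with simplicial cohomology on realisations of simplicial complexes, so this causes no difficulty.
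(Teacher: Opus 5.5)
Your proposal is correct and follows essentially the same route as the paper. Both arguments take an equivariant triangulation of $|K|^m$, subdivide it barycentrically twice to reach Bredon regularity, and then apply Lemma~\ref{lem:transfer} with $G=\Sig_m$ and $k=\Q$.

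The only real difference is the choice of triangulation. You use the ordered simplicial product, which is $\Sig_m$-invariant because every factor carries the same vertex order. The paper instead uses the order complex of the face poset of the product cell structure, which needs no vertex ordering at all.
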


\begin{proof}
Give $|K|^{m}$ its regular CW structure whose open cells are the
products $\mathring\sigma_1\times\dots\times\mathring\sigma_m$ with
$\sigma_i\in K$; the group $\Sig_m$ acts on $|K|^{m}$ by permuting
factors, and this action is \emph{cellular}, permuting product cells
among themselves. This permutation of cells is not free of fixed cells,
however: on the diagonal an element of $\Sig_m$ stabilises a product
cell $\mathring\sigma\times\dots\times\mathring\sigma$ with equal
factors setwise while permuting its interior points, i.e.\ without
fixing it pointwise. Thus the action on $|K|^m$ itself need not satisfy
Bredon's condition \textup{(A)}, and this is precisely why two
barycentric subdivisions are required below: the first brings the
action to satisfy \textup{(A)}, and the second then yields regularity.

Let $\mathcal F=\mathcal F(|K|^m)$ be the face poset of this regular CW
structure, i.e.\ the poset of closures of nonempty cells ordered by
inclusion, and let $\Delta(\mathcal F)$ be its order complex. The face
poset is functorial for cellular maps of regular CW complexes
\cite[Def.~10.11]{Kozlov2008}, so the cellular $\Sig_m$-action induces an
action on $\mathcal F$ by poset automorphisms, and hence a simplicial
$\Sig_m$-action on $\Delta(\mathcal F)$. Moreover the barycentric
subdivision of the regular CW complex is exactly this order complex,
\begin{equation}\label{eq:bdorder}
\operatorname{Bd}(|K|^m)=\Delta(\mathcal F),
\qquad
|\Delta(\mathcal F)|\cong|K|^m
\end{equation}
\cite[Eq.~(10.4) and~(10.5)]{Kozlov2008}. This homeomorphism is
$\Sig_m$-equivariant: both sides are built functorially from the cellular
action, and no choice of vertex ordering is involved, since the vertices
of $\Delta(\mathcal F)$ are the cells of $|K|^m$, which $\Sig_m$ permutes.
Write $T=\Delta(\mathcal F)$ for this equivariant simplicial
complex, so $|T|\cong|K|^m$ equivariantly, with orbit space
$\SP^m(|K|)$.

It remains to secure Bredon's regularity. Barycentric subdivision is a
functor on simplicial complexes and simplicial maps, so the simplicial
$\Sig_m$-action on $T$ induces simplicial $\Sig_m$-actions on
$T'=\mathrm{sd}\,T$ and $T''=\mathrm{sd}\,T'$, with $\Sig_m$-equivariant
homeomorphisms $|T''|\cong|T'|\cong|T|\cong|K|^m$. On $T'$ a simplex is a
strictly increasing chain of simplices of $T$; if $\sigma\in\Sig_m$
stabilises such a chain setwise, it preserves the dimension of each
member, and as the members have strictly increasing dimensions it fixes
each of them, hence fixes every vertex of that simplex of $T'$, hence
fixes the simplex pointwise. Thus the action on $T'$ satisfies condition
\textup{(A)}; taking a further barycentric subdivision to achieve this is
the standard device, recorded for simplicial $G$-actions in
\cite[Ch.~III, Prop.~1.1]{Bredon1972}. By that same
\cite[Ch.~III, Prop.~1.1]{Bredon1972}, the action on $T''$ is regular.
The equivariant homeomorphism $|T''|\cong|K|^m$ identifies orbit spaces,
$|T''|/\Sig_m\cong|K|^m/\Sig_m=\SP^{m}(|K|)$, and
Lemma~\ref{lem:transfer} applied to the regular $\Sig_m$-complex $T''$
(so $|G|=m!$) with $k=\Q$ yields \eqref{eq:orbitfin}.
\end{proof}

\begin{remark}\label{rem:coeff}
Only invertibility of $m!$ in the coefficient field is used. For fixed
$m$, one may replace $\Q$ by any field $k$ with
$\operatorname{char}(k)\nmid m!$. Since the application to $\dsecat$ uses
$m=r!$ for arbitrarily large $r$, rational coefficients are the natural
uniform choice.
\end{remark}

\subsection{Extension to spaces of locally finite simplicial homotopy type}
\label{sec:cwtype}

A space is of \emph{locally finite simplicial homotopy type} if it is homotopy equivalent to the geometric realisation of a locally finite simplicial complex.

\begin{theorem}[Orbit isomorphism for locally finite simplicial homotopy type]
\label{thm:orbitcw}
Suppose $X$ is of locally finite simplicial homotopy type and $m\ge 1$. Then the
homomorphism of graded rational algebras induced by the orbit map,
\[
q_X^*\colon H^*(\SP^m(X);\Q)\longrightarrow H^*(X^m;\Q),
\]
is injective, and its image is exactly the invariant subalgebra:
\begin{equation}\label{eq:orbitcw}
\im q_X^*=H^*(X^m;\Q)^{\Sig_m}.
\end{equation}
Consequently, $q_X^*$ corestricts to an isomorphism of graded rational
algebras
\begin{equation}\label{eq:orbitcores}
\bar q_X^*\colon H^*(\SP^m(X);\Q)\iso H^*(X^m;\Q)^{\Sig_m},
\end{equation}
which differs from $q_X^*$ only in the declared codomain: for every
$a\in H^*(\SP^m(X);\Q)$ one has $\bar q_X^*(a)=q_X^*(a)$.
\end{theorem}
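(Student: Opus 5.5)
We reduce Theorem~\ref{thm:orbitcw} to the polyhedral case, Proposition~\ref{prop:orbitfin}, using homotopy invariance of $\SP^m$ (Proposition~\ref{prop:homotopy}) and naturality of the orbit map. Choose a locally finite simplicial complex $K$ and a homotopy equivalence $h\colon |K|\to X$. Then $h^m\colon |K|^m\to X^m$ is a homotopy equivalence, and by Proposition~\ref{prop:homotopy} so is $\SP^m(h)\colon \SP^m(|K|)\to\SP^m(X)$. The strict identity \eqref{eq:strict} gives
\[
q_X\circ h^m=\SP^m(h)\circ q_{|K|},
\]
and hence, in rational cohomology,
\[
(h^m)^*\circ q_X^*=q_{|K|}^*\circ \SP^m(h)^*.
\]

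Both $(h^m)^*$ and $\SP^m(h)^*$ are isomorphisms. Since $q_{|K|}^*$ is injective by Proposition~\ref{prop:orbitfin}, the right-hand side is injective, and therefore so is $q_X^*$.

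For the image, we first check that $(h^m)^*$ is $\Sig_m$-equivariant. The map $h^m$ commutes strictly with the permutation action, since $\sigma\circ h^m=h^m\circ\sigma$ for each $\sigma\in\Sig_m$. Hence $(h^m)^*$ restricts to an isomorphism
\[
H^*(X^m;\Q)^{\Sig_m}\iso H^*(|K|^m;\Q)^{\Sig_m},
\]
whose inverse is obtained by applying the same argument to $((h^m)^*)^{-1}$: it too commutes with each $\sigma^*$, because it inverts a map that does. The image of $q_X^*$ is then computed by the chain
\[
\im q_X^*=((h^m)^*)^{-1}\bigl(\im q_{|K|}^*\bigr)=((h^m)^*)^{-1}\bigl(H^*(|K|^m)^{\Sig_m}\bigr)=H^*(X^m)^{\Sig_m}.
\]
The first equality uses surjectivity of $\SP^m(h)^*$, and the second uses Proposition~\ref{prop:orbitfin}.

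The inclusion $\im q_X^*\subseteq H^*(X^m)^{\Sig_m}$ can also be seen directly: $q_X\circ\sigma=q_X$ gives $\sigma^*q_X^*=q_X^*$. Because $q_X^*$ is induced by a continuous map, it is multiplicative, so the corestriction $\bar q_X^*$ is an isomorphism of graded algebras. It agrees with $q_X^*$ elementwise, by definition.

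The only delicate point is that the homotopy equivalences involved respect the group actions. This holds because $h^m$ is strictly equivariant, and no equivariant homotopy inverse is needed, since we only invert on cohomology. The point-set subtlety of $\SP^m$ of a homotopy is already handled by Proposition~\ref{prop:homotopy}.
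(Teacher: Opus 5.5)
Your proof is correct and follows the paper's argument: reduce to Proposition~\ref{prop:orbitfin} through a simplicial model $h\colon|K|\to X$, using the strictly commuting square from \eqref{eq:strict}, Proposition~\ref{prop:homotopy} for $\SP^m(h)$, and the strict equivariance of $h^m$. Your preimage identity $\im q_X^*=((h^m)^*)^{-1}(\im q_{|K|}^*)$ is a more compact form of the paper's element-by-element chase for the reverse inclusion.
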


\begin{proof}
By definition of locally finite simplicial homotopy type, there is a locally finite simplicial complex
$K$ and a homotopy equivalence $h\colon|K|\to X$; fix such a pair.
The square
\[
\begin{array}{ccc}
|K|^m & \xrightarrow{\ q_K\ } & \SP^m(|K|)\\[2pt]
{\scriptstyle h^m}\big\downarrow & & \big\downarrow{\scriptstyle\SP^m(h)}\\[4pt]
X^m & \xrightarrow{\ q_X\ } & \SP^m(X)
\end{array}
\]
commutes strictly by \eqref{eq:strict}. The map $h^m$ is a homotopy
equivalence, and $\SP^m(h)$ is one by Proposition~\ref{prop:homotopy}, so
both induce isomorphisms in cohomology. Moreover $h^m$ is
$\Sig_m$-equivariant, since
$h^m(\sigma\cdot\mathbf x)=\sigma\cdot h^m(\mathbf x)$; hence
$(h^m)^*$ restricts to an isomorphism
$$H^*(X^m;\Q)^{\Sig_m}\iso H^*(|K|^m;\Q)^{\Sig_m}.$$

We chase the induced cohomology square
\[
\begin{array}{ccc}
H^*(\SP^m(X);\Q) & \xrightarrow{\ q_X^*\ } & H^*(X^m;\Q)\\[2pt]
{\scriptstyle\SP^m(h)^*}\big\downarrow & & \big\downarrow{\scriptstyle(h^m)^*}\\[4pt]
H^*(\SP^m(|K|);\Q) & \xrightarrow{\ q_K^*\ } & H^*(|K|^m;\Q).
\end{array}
\]
First, $\im q_X^*\subseteq H^*(X^m;\Q)^{\Sig_m}$ because $q_X\sigma=q_X$
for all $\sigma$. For injectivity, if $q_X^*(a)=0$ then
$q_K^*(\SP^m(h)^*a)=(h^m)^*q_X^*(a)=0$; since $q_K^*$ is injective
(Proposition~\ref{prop:orbitfin}) and $\SP^m(h)^*$ is an isomorphism,
$a=0$. For the reverse inclusion in \eqref{eq:orbitcw}, take
$v\in H^*(X^m;\Q)^{\Sig_m}$. Since $h^m$ is equivariant, so is $(h^m)^*$,
whence $(h^m)^*v$ is again invariant; by
Proposition~\ref{prop:orbitfin} there is $b$ with
$q_K^*(b)=(h^m)^*v$. Choose $a$ with $\SP^m(h)^*a=b$ (possible, as
$\SP^m(h)^*$ is an isomorphism). Then
$(h^m)^*q_X^*(a)=q_K^*\SP^m(h)^*(a)=q_K^*(b)=(h^m)^*v$; since $(h^m)^*$ is
injective on all of $H^*(X^m;\Q)$, we get $q_X^*(a)=v$. Thus
\eqref{eq:orbitcw} holds. Finally, $q_X^*$ is an algebra map because it is
induced by a continuous map, and the corestriction
\eqref{eq:orbitcores} is then an isomorphism of graded algebras onto
$H^*(X^m;\Q)^{\Sig_m}$, the latter being a subalgebra since the
$\Sig_m$-action is by algebra automorphisms.
\end{proof}

\begin{remark}\label{rem:nomap}
Theorem~\ref{thm:orbitcw} is proved separately for each space $X$. Once
\eqref{eq:orbitcw} is known, the splitting below is defined directly on
$X$ and is natural for every continuous map. Thus a fibration
$p\colon E\to B$ is never replaced by a simplicial map, and the kernel
identity is obtained with no appeal to the homotopy invariance of
$\dsecat$; that invariance enters only later, in the application of
Section~\ref{sec:dsecat}.
\end{remark}

\subsection{The natural rational splitting}
\label{sec:splitting}

\begin{proposition}[Natural symmetrisation splitting]
\label{prop:split}
Let $X$ be of locally finite simplicial homotopy type and $m\ge 1$. For
$u\in H^*(X;\Q)$ put
\begin{equation}\label{eq:avg}
A^X_m(u)=\frac1m\sum_{j=1}^m(\pr^X_j)^*(u)\ \in\ H^*(X^m;\Q).
\end{equation}
Then $A^X_m(u)$ is $\Sig_m$-invariant, so it lies in the codomain of the
isomorphism $\bar q_X^*$ of \eqref{eq:orbitcores}, and
\begin{equation}\label{eq:defsplit}
s^X_m:=(\bar q_X^*)^{-1}\circ A^X_m\colon
H^*(X;\Q)\longrightarrow H^*(\SP^m(X);\Q)
\end{equation}
is a well-defined graded $\Q$-linear map. Equivalently, $s^X_m(u)$ is the
unique class in $H^*(\SP^m(X);\Q)$ with
\begin{equation}\label{eq:defsplitchar}
q_X^*\,s^X_m(u)=A^X_m(u),
\end{equation}
an identity in $H^*(X^m;\Q)$. It satisfies
\begin{equation}\label{eq:section}
(\sd^X_m)^*\circ s^X_m=\id_{H^*(X;\Q)},
\end{equation}
and it is natural: for every map $f\colon X\to Y$ between spaces of
locally finite simplicial homotopy type,
\begin{equation}\label{eq:natsplit}
\SP^m(f)^*\circ s^Y_m=s^X_m\circ f^*.
\end{equation}
\end{proposition}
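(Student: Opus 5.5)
The proof splits into four checks: invariance of $A^X_m(u)$, well-definedness and linearity of $s^X_m$, the section identity \eqref{eq:section}, and naturality \eqref{eq:natsplit}. All of them reduce, via Theorem~\ref{thm:orbitcw}, to identities in $H^*(X^m;\Q)$. The guiding principle is that $q_X^*$ is injective, so any identity between classes on $\SP^m(X)$ can be checked after pulling back to $X^m$.

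\textbf{Invariance.} For $\sigma\in\Sig_m$ acting on $X^m$ we have $\pr^X_j\circ\sigma=\pr^X_{\sigma^{-1}(j)}$, directly from the formula for the action. Hence $\sigma^*$ permutes the summands $(\pr^X_j)^*(u)$. The sum is therefore invariant, so $A^X_m(u)\in H^*(X^m;\Q)^{\Sig_m}$.

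\textbf{Well-definedness and linearity.} By \eqref{eq:orbitcores}, $\bar q_X^*$ is an isomorphism onto the invariants, so $s^X_m$ is well defined. It is graded $\Q$-linear because $A^X_m$ and $(\bar q_X^*)^{-1}$ both are. The characterisation \eqref{eq:defsplitchar}, including uniqueness, follows from the injectivity of $q_X^*$.

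\textbf{Section identity.} Since $\sd^X_m=q_X\circ\diag_X$,
\[
(\sd^X_m)^*s^X_m(u)=\diag_X^*q_X^*s^X_m(u)=\diag_X^*A^X_m(u)=\frac1m\sum_{j=1}^m(\pr^X_j\circ\diag_X)^*(u).
\]
Each $\pr^X_j\circ\diag_X=\id_X$, so the right-hand side is $\frac1m\cdot m\,u=u$.

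\textbf{Naturality.} Since $q_X^*$ is injective, it suffices to show
\[
q_X^*\SP^m(f)^*s^Y_m(v)=q_X^*s^X_m(f^*v)=A^X_m(f^*v).
\]
By the strict identity \eqref{eq:strict}, $\SP^m(f)\circ q_X=q_Y\circ f^m$, so the left-hand side equals $(f^m)^*q_Y^*s^Y_m(v)=(f^m)^*A^Y_m(v)$. Now $\pr^Y_j\circ f^m=f\circ\pr^X_j$, which gives $(f^m)^*A^Y_m(v)=\frac1m\sum_j(\pr^X_j)^*f^*v=A^X_m(f^*v)$.

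None of these steps is hard. The only point needing care is that naturality requires the injectivity of $q_X^*$ for the source space $X$, which holds because $X$ is also assumed to be of locally finite simplicial homotopy type. It also uses the strictness of \eqref{eq:strict}, so no homotopy arguments enter here.
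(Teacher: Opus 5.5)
Your proposal is correct and follows the paper's proof essentially step by step. You derive invariance from $\pr^X_j\circ\sigma=\pr^X_{\sigma^{-1}(j)}$, the section identity from $\sd^X_m=q_X\circ\diag_X$ with $\pr^X_j\circ\diag_X=\id_X$, and naturality by checking after the injective map $q_X^*$ using the strict identities $q_Y\circ f^m=\SP^m(f)\circ q_X$ and $\pr^Y_j\circ f^m=f\circ\pr^X_j$.
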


\begin{remark}\label{rem:corestr}
Two readings of the orbit map are in play, and it is worth keeping them
apart. To \emph{define} $s^X_m$ one inverts $\bar q_X^*$, which is an
isomorphism onto $H^*(X^m;\Q)^{\Sig_m}$ and so can be inverted only on
invariant classes; this is why the invariance of $A^X_m(u)$ must be
checked first. In computations one uses instead the full map $q_X^*$,
whose codomain is all of $H^*(X^m;\Q)$, so that it may be composed with
maps such as $(f^m)^*$. By Theorem~\ref{thm:orbitcw} the two agree
elementwise, $\bar q_X^*(a)=q_X^*(a)$, and $q_X^*$ is injective; this is
all that is used below.
\end{remark}

\begin{proof}
We first check the invariance asserted in the statement. For
$\sigma\in\Sig_m$ we have $\pr^X_j\circ\sigma=\pr^X_{\sigma^{-1}(j)}$,
hence
\[
\sigma^*A^X_m(u)=\frac1m\sum_{j=1}^m(\pr^X_{\sigma^{-1}(j)})^*(u)
=A^X_m(u),
\]
since $\sigma^{-1}$ permutes the index set. So
$A^X_m(u)\in H^*(X^m;\Q)^{\Sig_m}$. No Koszul sign arises: each summand
$(\pr^X_j)^*(u)$ is a single class, so we permute summands, not factors
in a cup product. Since $A^X_m(u)$ is invariant, it lies in the codomain
of $\bar q_X^*$, so $(\bar q_X^*)^{-1}A^X_m(u)$ is defined and
\eqref{eq:defsplit} makes sense. Linearity and the grading are inherited
from $A^X_m$ and $(\bar q_X^*)^{-1}$, both of which are graded and
$\Q$-linear.

For the characterisation \eqref{eq:defsplitchar}: applying $q_X^*$ to
$s^X_m(u)=(\bar q_X^*)^{-1}A^X_m(u)$ and using
$q_X^*=\bar q_X^*$ elementwise gives $q_X^*s^X_m(u)=A^X_m(u)$; and this
determines $s^X_m(u)$ because $q_X^*$ is injective
(Theorem~\ref{thm:orbitcw}). In particular, the splitting does not depend
on the choice of simplicial model $(K,h)$ used in
Theorem~\ref{thm:orbitcw}.

Because $\sd^X_m=q_X\diag_X$ and $\pr^X_j\diag_X=\id_X$, using
\eqref{eq:defsplitchar},
\[
(\sd^X_m)^*s^X_m(u)=\diag_X^*q_X^*s^X_m(u)
=\frac1m\sum_{j=1}^m(\pr^X_j\diag_X)^*(u)
=\frac1m\sum_{j=1}^m u=u,
\]
which is \eqref{eq:section}.

For naturality, let $f\colon X\to Y$ and $u\in H^*(Y;\Q)$. All the maps
in the following chain are the \emph{full} pullbacks, with codomain the
whole of $H^*(X^m;\Q)$; this is legitimate by
Remark~\ref{rem:corestr}. The first identity of \eqref{eq:strict}
applied to $f$ gives $(f^m)^*q_Y^*=q_X^*\SP^m(f)^*$, and
$\pr^Y_j\circ f^m=f\circ\pr^X_j$ gives $(f^m)^*A^Y_m=A^X_m\circ f^*$;
hence, using \eqref{eq:defsplitchar} twice,
\[
q_X^*\,\SP^m(f)^*s^Y_m(u)=(f^m)^*q_Y^*s^Y_m(u)=(f^m)^*A^Y_m(u)
=A^X_m(f^*u)=q_X^*s^X_m(f^*u).
\]
Since $q_X^*$ is injective,~\eqref{eq:natsplit} follows.
\end{proof}

\begin{remark}\label{rem:whym}
The normalisation $1/m$ in \eqref{eq:avg} is what makes $s^X_m$ a
section rather than $m$ times one: restricting along the diagonal sends
each of the $m$ summands to the same class $u$, so the unnormalised sum
would give $(\sd^X_m)^*s^X_m=m\cdot\id$. Rational coefficients are needed
exactly to invert this $m$. The same integer is inverted, from a
different angle, in \cite[Prop.~4.3]{D-J}: there one uses
$\xi^m_\infty\sd_m=m\,\xi_\infty$ in $\SP^\infty(X)$ and the fact that
multiplication by $m$ is a rational homology isomorphism. Note that
$m$ and $|\Sig_m|=m!$ play distinct roles: the former normalises the
average over the $m$ coordinate projections, the latter the average over
the group in Lemma~\ref{lem:transfer}. Both are invertible in $\Q$.
\end{remark}

\begin{remark}\label{rem:mult}
The pullback $(\sd^X_m)^*\colon H^*(\SP^m(X);\Q)\to H^*(X;\Q)$ is a
homomorphism of graded algebras, being induced by the continuous map
$\sd^X_m$. This is what is used in the cup-length argument below
(Corollary~\ref{cor:lift} and Theorem~\ref{thm:dsecat}).
\end{remark}

\begin{remark}\label{rem:notmult}
In general, $s^X_m$ is not an algebra homomorphism. This is irrelevant to
the cup-length argument: what is used there is that $(\sd^X_m)^*$ is
multiplicative (Remark~\ref{rem:mult}) and that $(\sd^X_m)^*s^X_m=\id$.
\end{remark}

\subsection{The kernel identity}
\label{sec:kernel}

\begin{theorem}[Kernel identity]\label{thm:kernel}
Let $p\colon E\to B$ be a continuous map between spaces of locally finite simplicial
homotopy type. Then
\begin{equation}\label{eq:kernel}
(\sd^B_m)^*\bigl(\ker(SP^m(p))^*\bigr)=\ker p^*.
\end{equation}
Moreover $(\sd^B_m)^*\colon\ker(SP^m(p))^*\to\ker p^*$ is a split
epimorphism, with section $s^B_m|_{\ker p^*}$.
\end{theorem}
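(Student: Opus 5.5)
The plan is to derive both inclusions of \eqref{eq:kernel} formally from Proposition~\ref{prop:split}, using only the naturality identity \eqref{eq:natsplit}, the section identity \eqref{eq:section}, and the second strict identity of \eqref{eq:strict}. No fibration hypothesis and no further topology are needed.

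For the inclusion ``$\subseteq$'', take $\beta\in\ker\SP^m(p)^*$. By \eqref{eq:strict} we have $\sd^B_m\circ p=\SP^m(p)\circ\sd^E_m$. Passing to cohomology gives
\[
p^*(\sd^B_m)^*(\beta)=(\sd^E_m)^*\SP^m(p)^*(\beta)=0,
\]
so $(\sd^B_m)^*(\beta)\in\ker p^*$. This direction holds for arbitrary spaces.

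For the inclusion ``$\supseteq$'' and the splitting, take $u\in\ker p^*$ and set $\beta:=s^B_m(u)$. Naturality \eqref{eq:natsplit} applied to $f=p$, which is legitimate since $E$ and $B$ are of locally finite simplicial homotopy type, yields
\[
\SP^m(p)^*\beta=s^E_m(p^*u)=s^E_m(0)=0.
\]
Here the last step uses $\Q$-linearity of $s^E_m$. Hence $s^B_m$ carries $\ker p^*$ into $\ker\SP^m(p)^*$. By \eqref{eq:section} we then get $(\sd^B_m)^*\beta=u$. So $u$ lies in the image of $\ker\SP^m(p)^*$, which proves equality. It also shows that $s^B_m|_{\ker p^*}\colon\ker p^*\to\ker\SP^m(p)^*$ is a well-defined $\Q$-linear map whose composite with the restricted $(\sd^B_m)^*$ is the identity, so the restriction is a split epimorphism.

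There is no substantial obstacle; all the real work sits in Theorem~\ref{thm:orbitcw} and Proposition~\ref{prop:split}. The one point to watch is that naturality must be applied with the full pullbacks, as stressed in Remark~\ref{rem:corestr}. The vanishing of $\SP^m(p)^*s^B_m(u)$ is genuinely an instance of naturality. It is \emph{not} a consequence of merely having some section of $(\sd^B_m)^*$, as Remark~\ref{rem:naturality} presumably emphasizes.
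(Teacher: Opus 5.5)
Your proposal is correct and is essentially the paper's own proof. The paper proves $\subseteq$ from the strict square $\sd^B_m\circ p=\SP^m(p)\circ\sd^E_m$, and proves $\supseteq$ together with the splitting by applying naturality of $s_m$ to $p$ (so that $\SP^m(p)^*s^B_m(u)=s^E_m(p^*u)=0$) and then the section identity $(\sd^B_m)^*s^B_m=\id$, exactly as you do.
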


\begin{proof}
Naturality of the symmetric diagonal gives the strictly commutative
square $\sd^B_m\circ p=\SP^m(p)\circ\sd^E_m$, whence in cohomology
\begin{equation}\label{eq:squarecoh}
p^*(\sd^B_m)^*=(\sd^E_m)^*(SP^m(p))^*.
\end{equation}
If $\alpha\in\ker(SP^m(p))^*$, then
$p^*(\sd^B_m)^*(\alpha)=(\sd^E_m)^*(SP^m(p))^*(\alpha)=0$, so
$(\sd^B_m)^*(\alpha)\in\ker p^*$; this is the inclusion $\subseteq$.
Conversely, let $u\in\ker p^*$ and put $\alpha=s^B_m(u)$. By naturality
of the splitting, Proposition~\ref{prop:split},
\[
(SP^m(p))^*(\alpha)=\SP^m(p)^*s^B_m(u)=s^E_m(p^*u)=s^E_m(0)=0,
\]
so $\alpha\in\ker(SP^m(p))^*$; and $(\sd^B_m)^*(\alpha)=u$
by~\eqref{eq:section}. This is the inclusion $\supseteq$ and exhibits
$s^B_m|_{\ker p^*}$ as a section.
\end{proof}

\begin{remark}\label{rem:nofib}
Theorem~\ref{thm:kernel} identifies $\ker p^*$ exactly as the image under
$(\sd^B_m)^*$ of the kernel upstairs; it is a functorial statement, using
no fibration hypothesis.
\end{remark}

\begin{remark}\label{rem:naturality}
The essential point is the naturality of the splitting, not merely
its existence. The map $(\sd^X_m)^*$ is a split epimorphism, so a section
$s$ exists on abstract grounds; but for the argument above one needs
$\SP^m(p)^*s^B_m(u)=s^E_m(p^*u)$, which forces $s^B_m(u)\in\ker\SP^m(p)^*$
whenever $u\in\ker p^*$. An arbitrary section, chosen independently for
each space, need not make this square commute, and then there is no
reason for the lift of a kernel class to land in the kernel upstairs. The
explicit symmetrisation of Proposition~\ref{prop:split} is precisely what
supplies a section that is natural for every continuous map, and hence
makes \eqref{eq:kernel} hold.
\end{remark}

\subsection{Cup length with rational coefficients}
\label{sec:cuplength}
In this subsection, for a fibration, $p\colon E\to B$, we provide a comparison between the cup lengths of the two ideals $\ker SP^m(p)^*$ and $\ker p^*$.
\begin{definition}\label{def:cuplength}
For a graded ideal $I$ in a graded $\Q$-algebra, its positive-degree cup
length is
\[
\cl_\Q(I)=\sup\bigl\{\,r\ge 0:\exists\,u_1,\dots,u_r\in I
\text{ homogeneous},\ |u_i|>0,\ u_1\cdots u_r\ne0\,\bigr\},
\]
allowed to be infinite.
\end{definition}

\begin{corollary}[Term-by-term lifting of nonzero products]
\label{cor:lift}
Under the hypotheses of Theorem~\ref{thm:kernel}, we have
$$\cl_\Q\bigl(\ker SP^m(p)^*\bigr)\ge\cl_\Q(\ker p^*).$$ More precisely,
every nonzero product in $\ker p^*$ lifts term by term to a nonzero
product in $\ker(SP^m(p))^*$.
\end{corollary}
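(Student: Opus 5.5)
The plan is to take a nonzero product in $\ker p^*$ and lift it one factor at a time using the section $s^B_m$ from Theorem~\ref{thm:kernel}. Let $r\le\cl_\Q(\ker p^*)$ be finite and attained. Choose homogeneous classes $u_1,\dots,u_r\in\ker p^*$ of positive degree with $u_1\smile\cdots\smile u_r\neq 0$. For each $i$ put $\alpha_i:=s^B_m(u_i)\in H^*(\SP^m(B);\Q)$.

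Three properties of the $\alpha_i$ then need checking.
\begin{enumerate}
\item Each $\alpha_i$ lies in $\ker\SP^m(p)^*$. This is exactly the conclusion of Theorem~\ref{thm:kernel}, which rests on the naturality \eqref{eq:natsplit}.
\item Each $\alpha_i$ is homogeneous of the same positive degree as $u_i$, because $s^B_m$ is graded (Proposition~\ref{prop:split}).
\item The product is nonzero. By Remark~\ref{rem:mult}, $(\sd^B_m)^*$ is an algebra homomorphism, and by \eqref{eq:section} it is left inverse to $s^B_m$. Hence
\[
(\sd^B_m)^*(\alpha_1\smile\cdots\smile\alpha_r)=u_1\smile\cdots\smile u_r\neq0,
\]
which forces $\alpha_1\smile\cdots\smile\alpha_r\neq0$.
\end{enumerate}
Together these show that $\ker\SP^m(p)^*$ contains a nonzero product of $r$ positive-degree homogeneous classes, which is the term-by-term lifting claim.

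Taking the supremum over $r$ gives $\cl_\Q(\ker\SP^m(p)^*)\ge\cl_\Q(\ker p^*)$. If $\cl_\Q(\ker p^*)=\infty$, the same argument applies for every finite $r$, so the left-hand side is infinite as well.

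There is no serious obstacle here. The only point needing care is that $s^B_m$ is not multiplicative (Remark~\ref{rem:notmult}), so one cannot lift the product itself. The argument avoids this by lifting each factor separately and detecting non-vanishing through the multiplicative map $(\sd^B_m)^*$.
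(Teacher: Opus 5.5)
Your proof is correct and follows the paper's argument exactly: you lift each factor via $\alpha_i=s^B_m(u_i)$, use Theorem~\ref{thm:kernel} to place $\alpha_i$ in $\ker\SP^m(p)^*$, and detect $\alpha_1\cdots\alpha_r\neq 0$ by applying the multiplicative map $(\sd^B_m)^*$, which returns $u_1\cdots u_r\neq 0$. Your explicit remarks on degree preservation and on the infinite cup-length case are small additions that the paper leaves implicit.
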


\begin{proof}
Let $u_1,\dots,u_r\in\ker p^*$ have positive degree with
$u_1\cdots u_r\ne0$, and set $\alpha_i=s^B_m(u_i)$, so
$\alpha_i\in\ker(SP^m(p))^*$ by Theorem~\ref{thm:kernel}. Since
$(\sd^B_m)^*$ is an algebra homomorphism and
$(\sd^B_m)^*(\alpha_i)=u_i$,
\[
(\sd^B_m)^*(\alpha_1\cdots\alpha_r)=\prod_{i=1}^r(\sd^B_m)^*(\alpha_i)
=u_1\cdots u_r\ne0,
\]
so $\alpha_1\cdots\alpha_r\ne0$. Note that $s^B_m$ itself is not assumed
multiplicative; see Remark~\ref{rem:notmult}.
\end{proof}

\subsection{Application to distributional sectional category}\label{sec:dsecat}

Throughout this section, $B$ and $E$ are of \emph{locally finite CW type}.
This strengthens the standing hypothesis of the previous sections: a
locally finite CW complex is homotopy equivalent to a locally finite simplicial complex, so a space of
locally finite CW type is of locally finite simplicial homotopy type, and
Proposition~\ref{prop:split} and Theorem~\ref{thm:kernel} apply to it
unchanged. The strengthening is what the distributional theory requires:
a locally finite CW complex is metrizable, indeed a metric
ANR, so the distributional constructions---which are built from the
Lévy--Prokhorov metric on spaces of finitely supported measures---are
available on a model, and are transported to $B$ and $E$ by the homotopy
invariance of $\dsecat$.

We recall the one input needed from the theory of distributional
sectional category. The lower bound below is stated in
Alexander--Spanier cohomology in general, because the stratification used
in its proof need not be by CW pairs; on metric ANRs, and in particular
on locally finite CW complexes, Alexander--Spanier and singular
cohomology agree, so we state it in the singular theory used throughout.
In this form it settles a conjecture of Jauhari.

\begin{theorem}[Rational kernel cup-length bound]\label{thm:dsecat}
Let $p\colon E\to B$ be a fibration between path-connected spaces having
the homotopy type of a locally finite CW complex. Then
\begin{equation}\label{eq:dsecat}
\dsecat(p)\ge\cl_\Q(\ker p^*).
\end{equation}
\end{theorem}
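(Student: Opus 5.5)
The plan is to combine Theorem~\ref{cohomological lower bound on dsecat} with Corollary~\ref{cor:lift}, after reducing to a model that is a metric ANR.

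\textbf{Reduction and setup.} Suppose $\cl_\Q(\ker p^*)\ge n$ for some finite $n\ge 1$; the case $n=0$ is trivial. If $\cl_\Q(\ker p^*)$ is infinite, run the argument for every $n$. We must show $\dsecat(p)\ge n$. First replace $p$ by a fibration between locally finite CW complexes. Choose homotopy equivalences $h_B\colon B'\to B$ with $B'$ a locally finite CW complex, and similarly a model for $E$. Then take a substitutional fibration of the resulting map $E'\to B'$. By Proposition~\ref{primero}, $\dsecat$ is unchanged, and $\ker p^*$ is carried isomorphically, as an ideal, onto the corresponding kernel. Hence the cup length is also unchanged. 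One should check that the substitutional fibration's total space is still metrizable and of the right homotopy type; the mapping path space of a map between metric ANRs is again a metric ANR, so this is fine. From now on $E,B$ are metric ANRs of locally finite simplicial homotopy type, and Alexander--Spanier cohomology agrees with singular cohomology on $B$ and on $\SP^{n!}(B)$. The latter is again a locally finite polyhedron (up to homotopy) by the regular-complex description in Proposition~\ref{prop:orbitfin}, hence an ANR.

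\textbf{Lifting.} Pick homogeneous $u_1,\dots,u_n\in\ker p^*$ of positive degree with $u_1\smile\cdots\smile u_n\neq0$ in $H^*(B;\Q)$. Set $m=n!$ and $\beta_i=s^B_{m}(u_i)\in H^*(\SP^{n!}(B);\Q)$. By Theorem~\ref{thm:kernel}, $\SP^{n!}(p)^*(\beta_i)=0$. By \eqref{eq:section}, $(\sd^B_{n!})^*(\beta_i)=u_i$. Thus the classes $\alpha_i:=(\sd^B_{n!})^*(\beta_i)=u_i$ have nonzero product.

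\textbf{Conclusion.} Theorem~\ref{cohomological lower bound on dsecat} with $R=\Q$ now gives $\dsecat(p)\ge n$, and taking the supremum over $n$ yields \eqref{eq:dsecat}.

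The main obstacle is the comparison of cohomology theories. Theorem~\ref{cohomological lower bound on dsecat} is phrased in Alexander--Spanier cohomology, while the splitting lives in singular cohomology. I would handle this by working on the locally finite CW model, where both $B$ and $\SP^{n!}(B)$ are paracompact ANRs and the natural comparison map is an isomorphism compatible with cup products and induced maps. Consequently the hypotheses $\SP^{n!}(p)^*\beta_i=0$ and $\prod\alpha_i\ne0$ transfer verbatim.
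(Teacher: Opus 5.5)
Your proposal is correct and follows the paper's proof essentially step for step. You pass to a locally finite CW model using homotopy invariance of $\dsecat$, which also preserves $\cl_\Q(\ker p^*)$; you then lift a nonzero product $u_1\cdots u_n$ to classes $\beta_i=s^B_{n!}(u_i)\in\ker\SP^{n!}(p)^*$ via Theorem~\ref{thm:kernel} and \eqref{eq:section}; finally you apply Theorem~\ref{cohomological lower bound on dsecat}. Your handling of the Alexander--Spanier versus singular comparison is more explicit than the paper's one-line remark that the two agree on metric ANRs. To be complete, however, it should also cover $\SP^{n!}(E)$, because Theorem~\ref{cohomological lower bound on dsecat} needs the vanishing $\SP^{n!}(p)^*\beta_i=0$ in Alexander--Spanier cohomology there, not only on $B$ and $\SP^{n!}(B)$.
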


\begin{proof}
Choose a locally finite CW model: a homotopy equivalence
$v\colon\widetilde B\to B$ with $\widetilde B$ a locally finite CW
complex, hence a metric ANR, and let $q\colon\widetilde E\to\widetilde B$
be a fibration corresponding to $p$ under this equivalence, so that there
is a homotopy equivalence $w\colon\widetilde E\to E$ with
$p\,w\simeq v\,q$. By homotopy invariance of $\dsecat$,
\[
\dsecat(p)=\dsecat(q).
\]
Moreover, $v^*$ and $w^*$ are isomorphisms of graded algebras and
$v^*p^*=q^*w^*$, so $v^*$ carries $\ker p^*$ isomorphically
onto $\ker q^*$; being an algebra isomorphism, it preserves nonvanishing
of products, whence
\[
\cl_\Q(\ker p^*)=\cl_\Q(\ker q^*).
\]
It therefore suffices to prove $\dsecat(q)\ge\cl_\Q(\ker q^*)$, and we
argue on the model $\widetilde B$, which is of locally finite CW type.

Let $r\ge1$ and suppose there are positive-degree classes
$u_1,\dots,u_r\in\ker q^*$ with $u_1\cdots u_r\ne0$. Apply
Proposition~\ref{prop:split} with $m=r!$ and set
$\beta_i=s^{\widetilde B}_{r!}(u_i)$. By Theorem~\ref{thm:kernel},
$\beta_i\in\ker\SP^{r!}(q)^*=\ker q_r^*$, and
\[
\prod_{i=1}^r(\sd^{\widetilde B}_{r!})^*(\beta_i)=\prod_{i=1}^r u_i\ne0,
\]
using \eqref{eq:section} and the multiplicativity of
$(\sd^{\widetilde B}_{r!})^*$ (Remark~\ref{rem:mult}).
Theorem~\ref{cohomological lower bound on dsecat} gives $\dsecat(q)\ge r$. Taking the
supremum over all such $r$, and using
$\dsecat(p)=\dsecat(q)$ together with
$\cl_\Q(\ker p^*)=\cl_\Q(\ker q^*)$, proves
\eqref{eq:dsecat}; if the cup length is infinite, the same argument for
every $r$ gives $\dsecat(p)=\infty$.
\end{proof}

This completes the proof of Theorem~\ref{thm:main}: the splitting and
kernel identity are Proposition~\ref{prop:split} and
Theorem~\ref{thm:kernel}, and the bound is Theorem~\ref{thm:dsecat}.

We now compute the distributional sectional category of the Hopf fibration.
\begin{example}\label{ex: Hopf fibration}
\normalfont{
Consider the Hopf fibration
$S^1\longrightarrow S^{2m+1}\xrightarrow{p}\mathbb{CP}^m.$
The rational cohomology ring of the base is
$$
H^*(\mathbb{CP}^m;\mathbb{Q})
\cong
\mathbb{Q}[u]/(u^{m+1}),
\qquad |u|=2.
$$
Since $H^2(S^{2m+1};\mathbb{Q})=0,$ we have $u\in\ker(p^*).$
Moreover, $u^m\neq 0.$
Then using Theorem~\ref{thm:dsecat} it follows that
$\dsecat(p)\geq m.$
Since
$\ct(\mathbb{CP}^m)=m,$
we conclude that
$\dsecat(p)=m.$ Note that we can also use $\dcat(\C P^m)=m$ from \cite[Proposition 6.6]{D-J}.}
\end{example}

\begin{example}
\normalfont{
Note that the lens spaces are defined as the quotient $L^{2n+1}_m=S^{2n+1}/\Z_m$ by considering $S^{2n+1}\subseteq \C^{n+1}$ and the group $\Z_m$ as $m$-th roots on unity.
Jaworowski \cite{J} described the free  $S^1$ action on lens spaces defined as follows:
\[e^{2\pi i x}\cdot [(z_1,\dots,z_{n+1})]=[(e^{2\pi i x}z_1,\dots,e^{2\pi i x}z_{n+1})],\] where $(z_1,\dots,z_{n+1})\in S^{2n+1}$ and $[(z_1,\dots,z_{n+1})]$ denote a class in the quotient space $L^{2n+1}_m$.
It was shown \cite[Section 2]{J} that this action is free and the corresponding quotient is $\C P^n$.
This gives us a fibration \[
S^1 \longrightarrow L^{2n+1}_m \xrightarrow{p} \mathbb{C}P^n.
\]

Using rational coefficients, the cohomology of a lens space simplifies considerably. Indeed, for the lens space \(L^{2n+1}_m\), one has
\[
H^*(L^{2n+1}_m;\mathbb{Q}) \cong H^*(S^{2n+1};\mathbb{Q}) \cong 
\begin{cases}
\mathbb{Q}, & *=0,\,2n+1,\\
0, & \text{otherwise}.
\end{cases}
\]
Thus, rationally, a lens space is indistinguishable from a sphere. 
Then, again using similar arguments as in Example~\ref{ex: Hopf fibration}, we obtain $\dsecat(p)=n$.}
\end{example}

\section{Multiplicative product inequality for distributional sectional category}\label{sec: multiplicative prod ineq}
In this section, we establish a multiplicative product inequality for the distributional sectional category. As a consequence, we obtain such inequalities for $\dcat$ and $\dTC$.

\begin{lemma}[Support of a product measure]\label{lem:product-support}
Let $X$ and $Y$ be metric spaces, and let
$\mu=\sum_{i\in I}\alpha_i\delta_{x_i}\in\mathcal{B}_m(X)$ and
$\nu=\sum_{j\in J}\beta_j\delta_{y_j}\in\mathcal{B}_n(Y)$ be probability measures
with $|I|\leq m$, $|J|\leq n$, the points $x_i$ (resp.\ $y_j$) pairwise distinct, and all weights strictly positive. Then
\[
\mu\otimes\nu=\sum_{(i,j)\in I\times J}\alpha_i\beta_j\,\delta_{(x_i,y_j)}
\]
is a probability measure with
\[
\operatorname{supp}(\mu\otimes\nu)=\operatorname{supp}(\mu)\times\operatorname{supp}(\nu),
\qquad
|\operatorname{supp}(\mu\otimes\nu)|\leq mn.
\]
In particular $\mu\otimes\nu\in\mathcal{B}_{mn}(X\times Y)$.
\end{lemma}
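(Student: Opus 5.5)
The plan is to work directly with the product measure, defined on Borel sets of $X\times Y$, and read off its support from its values on points and on rectangles. Since $\mu$ and $\nu$ are finite convex combinations of Dirac masses, I would define $\mu\otimes\nu$ as the unique Borel probability measure on $X\times Y$ with $(\mu\otimes\nu)(C\times D)=\mu(C)\nu(D)$ for Borel $C\subseteq X$, $D\subseteq Y$. Then I would check that the displayed finite sum agrees with it. Indeed, $\delta_{x_i}\otimes\delta_{y_j}=\delta_{(x_i,y_j)}$, and the tensor product is bilinear on finite sums. Equivalently, one can take the displayed sum as the definition and verify the rectangle formula directly, since
\[
\sum_{(i,j)\in I\times J}\alpha_i\beta_j\,\delta_{(x_i,y_j)}(C\times D)=\Bigl(\sum_{i\in I}\alpha_i\delta_{x_i}(C)\Bigr)\Bigl(\sum_{j\in J}\beta_j\delta_{y_j}(D)\Bigr).
\]
It is a probability measure because all weights $\alpha_i\beta_j$ are nonnegative and $\sum_{i,j}\alpha_i\beta_j=\bigl(\sum_i\alpha_i\bigr)\bigl(\sum_j\beta_j\bigr)=1$.

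Next I would identify the support. The points $x_i$ are pairwise distinct, and so are the points $y_j$. Hence the pairs $(x_i,y_j)$, for $(i,j)\in I\times J$, are pairwise distinct in $X\times Y$: if $(x_i,y_j)=(x_{i'},y_{j'})$ then $i=i'$ and $j=j'$. So no two summands coincide, and the atom at $(x_i,y_j)$ carries exactly the weight $\alpha_i\beta_j$. This weight is strictly positive because both factors are.

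With this, the support as defined in the preliminaries, namely the set of points carrying strictly positive weight, is precisely $\{(x_i,y_j)\}=\operatorname{supp}(\mu)\times\operatorname{supp}(\nu)$. Here I use $\operatorname{supp}(\mu)=\{x_i\}_{i\in I}$ and $\operatorname{supp}(\nu)=\{y_j\}_{j\in J}$, which holds by the positivity hypotheses. If one prefers the topological support (the smallest closed set of full measure), it agrees, because a finite set is closed in a metric space and every point of it has positive mass. The cardinality bound follows: $|\operatorname{supp}(\mu\otimes\nu)|=|I|\,|J|\le mn$, so $\mu\otimes\nu\in\mathcal{B}_{mn}(X\times Y)$.

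The only genuine subtlety, and the step I expect to need care, is the distinctness of the pairs. Without distinct $x_i$ and $y_j$ the representation could double-count an atom. Even then the support would not change, since merged weights stay positive, but the exact count $|I||J|$ would fail. This is why the hypotheses insist on pairwise distinct points and strictly positive weights.
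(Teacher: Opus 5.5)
Your proposal is correct and follows essentially the same route as the paper: the total mass is $\bigl(\sum_i\alpha_i\bigr)\bigl(\sum_j\beta_j\bigr)=1$, the strictly positive weights $\alpha_i\beta_j$ give $\operatorname{supp}(\mu\otimes\nu)=\operatorname{supp}(\mu)\times\operatorname{supp}(\nu)$, and the count is $|I|\,|J|\le mn$. You also spell out the pairwise distinctness of the points $(x_i,y_j)$ and the agreement with the rectangle formula; the paper leaves these implicit, and the latter is not needed since the statement takes the displayed sum as the definition of $\mu\otimes\nu$.
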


\begin{proof}
Each weight $\alpha_i\beta_j$ is strictly positive, so no atom cancels, and the support is exactly $\{(x_i,y_j):i\in I,\,j\in J\}=\operatorname{supp}(\mu)\times\operatorname{supp}(\nu)$, of cardinality at most $mn$. The total mass is $\big(\sum_i\alpha_i\big)\big(\sum_j\beta_j\big)=1$, so $\mu\otimes\nu$ is a probability measure.
\end{proof}

\begin{lemma}[Continuity of the tensor product]\label{lem:tensor-continuity}
Let $X$ and $Y$ be separable metric spaces. Then the map
\[
\otimes:\mathcal{B}_m(X)\times \mathcal{B}_n(Y)\longrightarrow
\mathcal{B}_{mn}(X\times Y),\qquad
(\mu,\nu)\longmapsto \mu\otimes\nu,
\]
is well defined and continuous, where $\mathcal{B}_m(X)$, $\mathcal{B}_n(Y)$ and $\mathcal{B}_{mn}(X\times Y)$ carry the subspace topologies induced by the L\'evy--Prokhorov metric on the corresponding spaces of Borel probability measures.
\end{lemma}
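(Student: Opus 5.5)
Well-definedness follows from Lemma~\ref{lem:product-support}: if $\mu\in\mathcal{B}_m(X)$ and $\nu\in\mathcal{B}_n(Y)$, write them with distinct atoms and positive weights. Then $\mu\otimes\nu$ is a probability measure supported on at most $mn$ points, so it lies in $\mathcal{B}_{mn}(X\times Y)$.

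For continuity I will use the weak-topology description from the Remark in Section~\ref{sec: prelim}. Since $X$, $Y$ and $X\times Y$ are separable metric spaces, the L\'evy--Prokhorov topologies coincide with the weak topologies. All three spaces are metrizable, so it suffices to check sequential continuity. Take $\mu_k\to\mu$ in $\mathcal{B}_m(X)$ and $\nu_k\to\nu$ in $\mathcal{B}_n(Y)$. We must show that
\[
\int_{X\times Y} h\,d(\mu_k\otimes\nu_k)\longrightarrow\int_{X\times Y} h\,d(\mu\otimes\nu)
\quad\text{for every } h\in C_b(X\times Y).
\]
This is the classical fact that weak convergence is preserved under products of measures on separable spaces (Billingsley, Theorem~2.8). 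I could cite it, but a direct argument is available in the finitely supported setting, which I would give instead.

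The direct argument avoids approximating $h$ by product functions $f\otimes g$, which would need a density/tightness argument. It uses the atomic structure, as in the proof that $r_{i,N}$ is continuous.

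\begin{itemize}
\item Write $\mu=\sum_i a_i\delta_{x_i}$ with distinct atoms. Fix $\varepsilon>0$ and $\eta$ smaller than half the minimal distance between the $x_i$.
\item For large $k$, the L\'evy--Prokhorov estimate shows that $\mu_k$ places mass $a_i+o(1)$ in the ball $B(x_i,\eta)$ and mass $o(1)$ outside $\bigcup_i B(x_i,\eta)$. The same holds for $\nu_k$ near the atoms $y_j$ of $\nu$.
\item Consequently $\mu_k\otimes\nu_k$ places mass $a_ib_j+o(1)$ in $B(x_i,\eta)\times B(y_j,\eta)$, and total mass $o(1)$ elsewhere.
\item Since $h$ is continuous, it is within $\varepsilon$ of $h(x_i,y_j)$ on a small enough product neighbourhood of $(x_i,y_j)$; shrink $\eta$ if needed. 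Since $h$ is bounded, the leftover mass contributes at most $\|h\|_\infty\cdot o(1)$.
\item Summing over the finitely many pairs $(i,j)$ gives
\[
\limsup_k\Big|\int h\,d(\mu_k\otimes\nu_k)-\sum_{i,j}a_ib_jh(x_i,y_j)\Big|\le\varepsilon .
\]
\end{itemize}

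The main obstacle is the mass bookkeeping in the second step. One needs $\mu_k(B(x_i,\eta))\to a_i$ exactly, not merely a lower bound. The Prokhorov inequality gives $\mu_k(B(x_i,\eta))\ge a_i-\eta'$, since the open $\eta'$-ball around $x_i$ contains the $\eta'$-enlargement of $\{x_i\}$ for $\eta'<\eta$. Summing these lower bounds over the disjoint balls and using total mass $1$ then yields matching upper bounds up to $m\eta'$. The fallback is to cite Billingsley's product theorem, using separability, which is the reason for that hypothesis.
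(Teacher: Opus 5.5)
Your proposal is correct and follows essentially the same route as the paper: reduce to sequential weak convergence, isolate the finitely many atoms in disjoint balls, show the ball masses converge exactly, and control the integral on the rectangles by continuity of $h$ and on the remainder by $\|h\|_\infty$. The only cosmetic difference is that you get the lower bound on $\mu_k(B(x_i,\eta))$ directly from the L\'evy--Prokhorov inequality, whereas the paper gets it from the portmanteau theorem; both then force $\mu_k(B(x_i,\eta))\to a_i$ by the same total-mass summation argument.
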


\begin{proof}
The map is well defined by Lemma~\ref{lem:product-support}: for $\mu\in\mathcal{B}_m(X)$ and $\nu\in\mathcal{B}_n(Y)$, the product $\mu\otimes\nu$ has support of cardinality at most $mn$, hence lies in $\mathcal{B}_{mn}(X\times Y)$.

Since $X$ and $Y$ are separable metric spaces, the L\'evy--Prokhorov topology agrees with the topology of weak convergence. The spaces involved are metrizable, so it is enough to prove sequential continuity.

Let $\mu_k\to\mu$ in $\mathcal{B}_m(X)$ and $\nu_k\to\nu$ in $\mathcal{B}_n(Y)$. Write $\mu=\sum_{i=1}^r\alpha_i\delta_{x_i}$ and $\nu=\sum_{j=1}^s\beta_j\delta_{y_j}$, with $r\leq m$, $s\leq n$, the points $x_i$ and $y_j$ pairwise distinct, and all coefficients positive. By Lemma~\ref{lem:product-support}, $\mu_k\otimes\nu_k\in\mathcal{B}_{mn}(X\times Y)$ for every $k$ and $\mu\otimes\nu\in\mathcal{B}_{mn}(X\times Y)$; we prove that $\mu_k\otimes\nu_k$ converges weakly to $\mu\otimes\nu$.

Let $f\in C_b(X\times Y)$ and set $M=\|f\|_\infty$. Fix $\varepsilon>0$. By continuity of $f$ at the finitely many points $(x_i,y_j)$, we may choose $\rho>0$ such that the balls $U_i=B(x_i,\rho)$ are pairwise disjoint, the balls $V_j=B(y_j,\rho)$ are pairwise disjoint, and $|f(x,y)-f(x_i,y_j)|<\varepsilon$ whenever $x\in U_i$ and $y\in V_j$.

Put $U=\bigcup_i U_i$ and $V=\bigcup_j V_j$. We first claim that $\mu_k(U_i)\to\alpha_i$ for every $i$, and $\mu_k(X\setminus U)\to0$; similarly, $\nu_k(V_j)\to\beta_j$ for every $j$, and $\nu_k(Y\setminus V)\to0$. Indeed, since each $U_i$ is open and the points $x_i$ are distinct with the $U_i$ pairwise disjoint, $\mu(U_i)=\alpha_i$; the portmanteau theorem then gives $\liminf_k\mu_k(U_i)\geq\mu(U_i)=\alpha_i$. Since $U$ is open and $\mu(U)=1$, it also gives $\mu_k(U)\to1$. As the $U_i$ are pairwise disjoint, $\sum_i\mu_k(U_i)=\mu_k(U)\to1$. Hence every convergent subsequence of $(\mu_k(U_i))_i$ has limit $(c_i)_i$ with $c_i\geq\alpha_i$ and $\sum_i c_i=1=\sum_i\alpha_i$, forcing $c_i=\alpha_i$ for all $i$. Thus $\mu_k(U_i)\to\alpha_i$ for every $i$, and consequently $\mu_k(X\setminus U)=1-\mu_k(U)\to0$. The proof for $\nu_k$ is identical.

Let $R=(X\times Y)\setminus(U\times V)$. Since $R\subseteq ((X\setminus U)\times Y)\cup (X\times(Y\setminus V))$, we have
\[
(\mu_k\otimes\nu_k)(R)\leq \mu_k(X\setminus U)+\nu_k(Y\setminus V)\longrightarrow 0 .
\]
The set $U\times V$ is the disjoint union of the rectangles $U_i\times V_j$, and on each of them $f$ differs from the constant $f(x_i,y_j)$ by less than $\varepsilon$, while $(\mu_k\otimes\nu_k)(U_i\times V_j)=\mu_k(U_i)\nu_k(V_j)$. Therefore
\begin{multline*}
\left|
\int f\,d(\mu_k\otimes\nu_k)
-
\sum_{i,j} f(x_i,y_j)\mu_k(U_i)\nu_k(V_j)
\right|\\
\leq
\varepsilon\,(\mu_k\otimes\nu_k)(U\times V)+M\,(\mu_k\otimes\nu_k)(R)
\leq
\varepsilon + M\,(\mu_k\otimes\nu_k)(R).
\end{multline*}
Letting $k\to\infty$ and using $\mu_k(U_i)\nu_k(V_j)\to\alpha_i\beta_j$, we obtain
\[
\limsup_{k\to\infty}
\left|
\int f\,d(\mu_k\otimes\nu_k)
-
\sum_{i,j} f(x_i,y_j)\alpha_i\beta_j
\right|
\leq \varepsilon .
\]
Since $\varepsilon>0$ is arbitrary and $\sum_{i,j} f(x_i,y_j)\alpha_i\beta_j=\int f\,d(\mu\otimes\nu)$, it follows that $\int f\,d(\mu_k\otimes\nu_k)\to\int f\,d(\mu\otimes\nu)$. Thus $\mu_k\otimes\nu_k$ converges weakly to $\mu\otimes\nu$, and therefore converges in the L\'evy--Prokhorov topology.
\end{proof}

\begin{theorem}\label{prop:product-inequality}
Let $p\colon E\to B$ and $p'\colon E'\to B'$ be fibrations, where $E$ and $E'$ are separable metric spaces. Then
\[
\operatorname{dsecat}(p\times p')
\leq
(\operatorname{dsecat}(p)+1)(\operatorname{dsecat}(p')+1)-1 .
\]
\end{theorem}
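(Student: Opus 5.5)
Set $m=\dsecat(p)+1$ and $n=\dsecat(p')+1$; we may assume both are finite, since otherwise there is nothing to prove. By definition there are continuous sections
\[
s\colon B\to E_m(p),\qquad s'\colon B'\to E_{n}(p')
\]
of $\mathcal{B}_m(p)$ and $\mathcal{B}_n(p')$. The plan is to produce a section of $\mathcal{B}_{mn}(p\times p')$ by taking product measures. Concretely, we define
\[
S\colon B\times B'\to E_{mn}(p\times p'),\qquad S(b,b')=s(b)\otimes s'(b').
\]
Since $E\times E'$ with the max metric is again a separable metric space, it is a legitimate total space for the construction $E_{mn}(-)$.

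First we check that $S$ is well defined and is a section. If $\operatorname{supp}s(b)\subseteq p^{-1}(b)$ and $\operatorname{supp}s'(b')\subseteq (p')^{-1}(b')$, then Lemma~\ref{lem:product-support} gives
\[
\operatorname{supp}\bigl(s(b)\otimes s'(b')\bigr)=\operatorname{supp}s(b)\times\operatorname{supp}s'(b')\subseteq (p\times p')^{-1}(b,b'),
\]
a set of cardinality at most $mn$. Hence $S(b,b')\in\mathcal{B}_{mn}\bigl((p\times p')^{-1}(b,b')\bigr)$, and $\mathcal{B}_{mn}(p\times p')\circ S=\id$.

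Second, for continuity we factor $S$ as
\[
B\times B'\xrightarrow{\ s\times s'\ }\mathcal{B}_m(E)\times\mathcal{B}_n(E')\xrightarrow{\ \otimes\ }\mathcal{B}_{mn}(E\times E').
\]
Here $E_m(p)\subseteq\mathcal{B}_m(E)$ and $E_{mn}(p\times p')\subseteq\mathcal{B}_{mn}(E\times E')$ carry subspace topologies. The first map is continuous, and the second is continuous by Lemma~\ref{lem:tensor-continuity}, using separability of $E$ and $E'$. Since $S$ takes values in $E_{mn}(p\times p')$, it is continuous as a map into that subspace.

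Therefore $\mathcal{B}_{mn}(p\times p')$ admits a section, which gives $\dsecat(p\times p')\le mn-1$, the claimed inequality. We also need $p\times p'$ to be a fibration for $\dsecat(p\times p')$ to be defined; this is standard.

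The main subtle point is the continuity of $\otimes$, which is exactly Lemma~\ref{lem:tensor-continuity}. There is one remaining compatibility to confirm: the Lévy--Prokhorov topology on $\mathcal{B}(E\times E')$ must be computed with a product-compatible metric. This holds because on separable spaces the Lévy--Prokhorov topology is the weak topology, and the weak topology depends only on the underlying topology of $E\times E'$.
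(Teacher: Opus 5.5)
Your proposal is correct and follows the paper's proof essentially step for step. The paper also uses the section $(x,y)\mapsto s(x)\otimes s'(y)$ of $\mathcal{B}_{mn}(p\times p')$, with the support computation from Lemma~\ref{lem:product-support}, and gets continuity from the factorization through $\otimes$ and Lemma~\ref{lem:tensor-continuity}. Your two extra remarks make explicit points the paper leaves tacit: that $p\times p'$ is a fibration, and that, by separability, the L\'evy--Prokhorov topology on $\mathcal{B}(E\times E')$ does not depend on which compatible product metric is chosen.
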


\begin{proof}
If one of the two invariants is infinite, the assertion is trivial. Thus we may assume that $a=\operatorname{dsecat}(p)<\infty$ and $c=\operatorname{dsecat}(p')<\infty$. Put $m=a+1$ and $n=c+1$.

By definition, $\mathcal{B}_m(p):E_m(p)\to B$ and $\mathcal{B}_n(p'):E_n(p')\to B'$ admit sections $s:B\to E_m(p)$ and $s':B'\to E_n(p')$. Hence, for each $x\in B$, the measure $s(x)$ is supported in $p^{-1}(x)$ and has support of cardinal at most $m$; similarly, for each $y\in B'$, the measure $s'(y)$ is supported in $(p')^{-1}(y)$ and has support of cardinal at most $n$.

Define $\sigma:B\times B'\to \mathcal{B}_{mn}(E\times E')$ by $\sigma(x,y)=s(x)\otimes s'(y)$. By Lemma~\ref{lem:product-support}, the support of $\sigma(x,y)$ is
\[
\begin{split}
\operatorname{supp}(\sigma(x,y))
&=
\operatorname{supp}(s(x))\times \operatorname{supp}(s'(y))\\
&\subset
p^{-1}(x)\times (p')^{-1}(y)
=
(p\times p')^{-1}(x,y),
\end{split}
\]
of cardinal at most $mn$. Therefore $\sigma(x,y)$ belongs to $E_{mn}(p\times p')$, and so $\sigma$ defines a map $\sigma:B\times B'\to E_{mn}(p\times p')$.

It remains to check continuity. The map $\sigma$ is the composite
\[
B\times B'
\xrightarrow{s\times s'}
E_m(p)\times E_n(p')
\hookrightarrow
\mathcal{B}_m(E)\times \mathcal{B}_n(E')
\xrightarrow{\otimes}
\mathcal{B}_{mn}(E\times E').
\]
The first map is continuous because $s$ and $s'$ are continuous, the middle map is continuous because $E_m(p)$ and $E_n(p')$ carry the subspace topologies, and the last map is continuous by Lemma~\ref{lem:tensor-continuity}. Since the image lies in the subspace $E_{mn}(p\times p')$, $\sigma$ is continuous as a map into $E_{mn}(p\times p')$.

Finally, $\sigma(x,y)$ is supported in the fibre $(p\times p')^{-1}(x,y)$, hence $\mathcal{B}_{mn}(p\times p')(\sigma(x,y))=(x,y)$. Thus $\sigma$ is a section of $\mathcal{B}_{mn}(p\times p')$. Consequently,
\[
\operatorname{dsecat}(p\times p')\leq mn-1=(a+1)(c+1)-1,
\]
which is the desired inequality.
\end{proof}

\begin{corollary}
If $X$ and $Y$ are separable metric spaces. Then we have 
\begin{enumerate}
    \item $\dcat(X\times Y)+1\leq (\dcat(X)+1)(\dcat(Y)+1)$.  
    
    \ 
    
    \item $\dTC(X\times Y)+1\leq (\dTC(X)+1)(\dTC(Y)+1)$.
\end{enumerate}
\end{corollary}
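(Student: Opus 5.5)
The plan is to specialise Theorem~\ref{prop:product-inequality} to the two fibrations that compute $\dcat$ and $\dTC$, after identifying the product fibration with the fibration attached to the product space. The two key facts are $\dcat(X)=\dsecat(e_X)$ for the based path fibration $e_X\colon P_0X\to X$, and $\dTC(X)=\dsecat(\pi_X)$ for the free path fibration $\pi_X\colon PX\to X\times X$. Both are recalled in the remark following Theorem~\ref{cohomological lower bound on dsecat}.

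For (1), fix basepoints $x_0\in X$ and $y_0\in Y$, and take $(x_0,y_0)$ as basepoint of $X\times Y$. The map
\[
P_0X\times P_0Y\to P_0(X\times Y),\qquad (\alpha,\beta)\mapsto(t\mapsto(\alpha(t),\beta(t))),
\]
is a homeomorphism over $X\times Y$. Hence $e_X\times e_Y$ and $e_{X\times Y}$ are the same fibration up to a homeomorphism commuting with the projections. By Proposition~\ref{primero} (or directly, since the construction $E_k(-)$ is functorial), $\dcat(X\times Y)=\dsecat(e_X\times e_Y)$.

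For (2), the same reasoning gives $PX\times PY\cong P(X\times Y)$. This identifies $\pi_X\times\pi_Y\colon PX\times PY\to (X\times X)\times(Y\times Y)$ with $\pi_{X\times Y}$, once one composes with the shuffle homeomorphism $(X\times X)\times(Y\times Y)\cong (X\times Y)\times(X\times Y)$. Since both identifications are homeomorphisms compatible with the projections, Proposition~\ref{primero} gives $\dTC(X\times Y)=\dsecat(\pi_X\times\pi_Y)$.

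Theorem~\ref{prop:product-inequality} then yields both inequalities directly, provided its hypothesis holds: the total spaces $P_0X$, $P_0Y$, $PX$, $PY$ must be separable metric spaces. I expect this hypothesis check to be the only real obstacle. The approach is to metrise the path spaces with the sup metric $d(\alpha,\beta)=\sup_t d(\alpha(t),\beta(t))$, which induces the compact-open topology because $I$ is compact. For separability, recall that when $I$ is compact and $X$ is separable metric, $C(I,X)$ is separable. One way to see this is to embed $X$ isometrically into a separable Banach space; there, piecewise linear paths with rational nodes form a countable set that approximates every path, and a subspace of a separable metric space is separable. $P_0X$ is a subspace of $PX$, so it is separable as well. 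Finally, if the statement is read with $\dcat$ defined only for path-connected spaces, the identifications above still hold verbatim, so no extra hypothesis is needed beyond separability and metrizability of $X$ and $Y$.
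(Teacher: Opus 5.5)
Your proof is correct. For (2) it is the paper's argument: Theorem~\ref{prop:product-inequality} applied to $\pi_X$ and $\pi_Y$, together with the fact that $PX$ is a separable metric space whenever $X$ is. You additionally spell out two things the paper leaves as an ``observation'': the identification of $\pi_X\times\pi_Y$ with $\pi_{X\times Y}$ (up to the shuffle of the base), and the separability of $C(I,X)$ under the sup metric.

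For (1) your route is different. The paper gives no argument and simply cites Dranishnikov--Jauhari's Proposition~3.5. You instead run the same mechanism on the based path fibrations, using $P_0X\times P_0Y\cong P_0(X\times Y)$ over $X\times Y$, so that $\dcat(X\times Y)=\dsecat(e_X\times e_Y)$.

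Your version makes the corollary self-contained and treats $\dcat$ and $\dTC$ uniformly, as two instances of the single product inequality for $\dsecat$. The price is that (1) inherits the separability hypothesis of Theorem~\ref{prop:product-inequality}; this is harmless here, since the corollary assumes it anyway. The paper's citation, by contrast, simply imports the $\dcat$ case from the existing literature without re-deriving it.
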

\begin{proof}
Note that the inequality in (1) is established in \cite[Proposition 3.5]{D-J}.
The inequality in (2) follows from Theorem \ref{prop:product-inequality} and the observation that $PX$ is a separable metric space, whenever $X$ is. 
\end{proof}

\begin{remark}
In contrast to the classical subadditivity
$\operatorname{secat}(p\times p')\leq \operatorname{secat}(p)+\operatorname{secat}(p')$
of Schwarz, the above argument naturally gives a multiplicative estimate in the distributional setting: the tensor product of fibrewise measures multiplies the sizes of their supports rather than adding them. Thus this proof does not recover an additive estimate.

On the other hand, Dranishnikov \cite{dranishnikov2025distributional} provided counterexamples to the additive product inequalities for $\dcat$ and $\dTC$ using lens spaces. Therefore, in general, one cannot expect an additive product inequality to hold for $\dsecat$.
\end{remark}

\section{Composition inequality for distributional sectional category}\label{sec: composition inequality}

All spaces below are assumed to be separable metric spaces, unless otherwise stated. Recall from Section~\ref{sec: prelim} that, for a metric space $Z$ and an integer $k\geq 1$,  $\mathcal{B}_k(Z)$ denotes the subspace of the space $\mathcal {B}(Z)$ of Borel probability measures on $Z$ consisting of probability measures with support of cardinality at most $k$. We equip $\mathcal{B}_k(Z)$ with the topology induced by the Levy--Prokhorov metric.

We shall use the following standard fact: if $Z$ is a separable metric space, then the Levy--Prokhorov topology on $\mathcal B(Z)$ agrees with the weak topology of probability measures; see, for instance, Billingsley \cite[Theorem 6.8]{Billingsley1999}. Thus, on $\mathcal{B}_k(Z)$, the induced Levy--Prokhorov topology agrees with the induced weak topology.

We shall also use that $\mathcal{B}_k(Z)$ is separable whenever $Z$ is separable. Indeed, if $D\subseteq Z$ is a countable dense subset, then the set of measures of the form
$$
 \sum_{i=1}^r q_i\delta_{d_i},
 \qquad
 r\leq k,
 \quad d_i\in D,
 \quad q_i\in \mathbb Q_{\geq 0},
 \quad \sum_i q_i=1,
$$
is countable and dense in $\mathcal{B}_k(Z)$.

All path spaces are endowed with the compact--open topology. Since the interval $I$ is compact and all spaces involved are metric, this agrees with the topology of uniform convergence. Hence the path spaces and pullbacks used below are metrizable; under the separability hypotheses they remain separable.

For a map $p\colon E\to B$, put
\[
 E_k(p)=\{\mu\in \mathcal{B}_k(E):\supp(\mu)\subseteq p^{-1}(b)
 \text{ for some } b\in B\},
\]
with the subspace topology inherited from $\mathcal{B}_k(E)$, and define
\[
 \mathcal{B}_k(p):E_k(p)\longrightarrow B,
 \qquad \mathcal{B}_k(p)(\mu)=b
 \quad \text{if } \supp(\mu)\subseteq p^{-1}(b).
\]
If $p$ is a Hurewicz fibration, then $\mathcal{B}_k(p)$ is again a fibration. The distributional sectional category $\dsecat(p)$ is the least $n\geq 0$ such that $\mathcal{B}_{n+1}(p)$ admits a continuous section.

\subsection{The barycentre map}

\begin{lemma}[Continuity of the barycentre map]
Let $Z$ be a separable metric space, and let $m,n\geq 1$. Equip $\mathcal{B}_n(Z)$ with the Levy--Prokhorov topology, and form $\mathcal{B}_m(\mathcal{B}_n(Z))$ with respect to this topology. Then the barycentre map
\[
 \bary\colon \mathcal{B}_m(\mathcal{B}_n(Z))\longrightarrow \mathcal{B}_{mn}(Z),
 \qquad
 \bary\left(\sum_{j=1}^r a_j\delta_{\nu_j}\right)
 =\sum_{j=1}^r a_j\nu_j,
\]
is a well-defined continuous map.
\end{lemma}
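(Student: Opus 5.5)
Well-definedness comes first. Given $\Theta=\sum_{j=1}^r a_j\delta_{\nu_j}$ with $r\le m$, $a_j\ge0$, $\sum a_j=1$, and each $\nu_j\in\mathcal{B}_n(Z)$, the combination $\sum_j a_j\nu_j$ is a Borel probability measure. Its support is contained in $\bigcup_j\supp(\nu_j)$, which has at most $mn$ points, so it lies in $\mathcal{B}_{mn}(Z)$. The value does not depend on how $\Theta$ is written: it equals $\int_{\mathcal{B}_n(Z)}\nu\,d\Theta(\nu)$, that is,
\[
\bary(\Theta)(A)=\int \nu(A)\,d\Theta(\nu)
\]
for Borel $A$. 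Repeated atoms or zero weights therefore do not matter.

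For continuity, both spaces are metrizable, and by the standard fact recalled above their Levy--Prokhorov topologies are weak topologies. Here $Z$ is separable, and $\mathcal{B}_n(Z)$ is separable by the remark above. It therefore suffices to show that $\Theta_k\to\Theta$ weakly in $\mathcal{B}_m(\mathcal{B}_n(Z))$ implies $\bary(\Theta_k)\to\bary(\Theta)$ weakly in $\mathcal{B}_{mn}(Z)$. Fix $f\in C_b(Z)$ and define
\[
\hat f\colon \mathcal{B}_n(Z)\to\R,\qquad \hat f(\nu)=\int_Z f\,d\nu .
\]
By definition of weak convergence on $\mathcal{B}_n(Z)$, the map $\hat f$ is continuous, and $\|\hat f\|_\infty\le\|f\|_\infty$, so $\hat f\in C_b(\mathcal{B}_n(Z))$. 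The key identity is
\[
\int_Z f\,d\bary(\Theta)=\sum_j a_j\int f\,d\nu_j=\int_{\mathcal{B}_n(Z)}\hat f\,d\Theta ,
\]
which holds simply by linearity for finitely supported $\Theta$. Weak convergence $\Theta_k\to\Theta$ gives $\int\hat f\,d\Theta_k\to\int\hat f\,d\Theta$, hence $\int f\,d\bary(\Theta_k)\to\int f\,d\bary(\Theta)$. Since $f$ was arbitrary, this is the required weak convergence.

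The only delicate point is the justification of the topologies. The identification of the Levy--Prokhorov and weak topologies needs separability of both base spaces, $Z$ and $\mathcal{B}_n(Z)$. For $Z$ it is a hypothesis, and for $\mathcal{B}_n(Z)$ it is the density remark made earlier. One should also note that sequential continuity suffices because the domain is metrizable. If one prefers to avoid the identification on the outer level, a direct Levy--Prokhorov estimate should also work: couple $\Theta_k$ and $\Theta$ atom-by-atom, using the disjoint-ball argument as in the continuity lemma for $r_{i,N}$, and use that $d_{LP}$ is jointly quasi-convex under mixtures. The weak-topology route is cleaner, though, and is the one I would write.
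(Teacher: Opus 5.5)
Your proposal is correct and follows the paper's proof essentially verbatim. Well-definedness comes from $\supp(\bary(\Theta))\subseteq\bigcup_j\supp(\nu_j)$, and continuity comes from passing to weak convergence and testing against the bounded continuous function $\hat f(\nu)=\int_Z f\,d\nu$ on $\mathcal{B}_n(Z)$. Your extra remarks are accurate: the barycentre does not depend on how $\Theta$ is written, and the outer identification of topologies uses separability of $\mathcal{B}_n(Z)$. The paper handles both points only implicitly or in the surrounding text.
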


\begin{proof}
First, let us check that it is well-defined. Consider
\[
 \Theta=\sum_{j=1}^r a_j\delta_{\nu_j}\in \mathcal{B}_m(\mathcal{B}_n(Z)),
 \qquad r\leq m,
\]
where each $\nu_j\in \mathcal{B}_n(Z)$. We want to show that
$ \left|\supp\bigl(\bary(\Theta)\bigr)\right|\leq mn.$ Indeed, each $\nu_j$ is supported on at most $n$ points of $Z$, and therefore
\[
 \supp\bigl(\bary(\Theta)\bigr)
 \subseteq
 \bigcup_{j=1}^r \supp(\nu_j).
\]
This union has cardinality at most $mn$, so $\bary(\Theta)\in \mathcal{B}_{mn}(Z)$.

It remains to prove continuity. Since $Z$ is a separable metric space, the Levy--Prokhorov topology agrees with the weak topology on the relevant spaces of probability measures. Let
\[
 \Theta_l\longrightarrow \Theta
\]
in $\mathcal{B}_m(\mathcal{B}_n(Z))$. We show that
$\bary(\Theta_l)\longrightarrow \bary(\Theta)$
weakly in $\mathcal{B}_{mn}(Z)$. In order to see this, let $h\in C_b(Z)$ a bounded continuous map and define
\[
 \widehat h\colon \mathcal{B}_n(Z)\longrightarrow \mathbb R,
 \qquad
 \widehat h(\nu)=\int_Z h\,d\nu.
\]
The function $\widehat h$ is bounded and continuous because $\mathcal{B}_n(Z)$ carries the weak topology. Hence
\[
 \int_Z h\,d\bary(\Theta_l)
 =
 \int_{\mathcal{B}_n(Z)} \widehat h(\nu)\,d\Theta_l(\nu)
 \longrightarrow
 \int_{\mathcal{B}_n(Z)} \widehat h(\nu)\,d\Theta(\nu)
 =
 \int_Z h\,d\bary(\Theta).
\]
Thus $\bary(\Theta_l)\to \bary(\Theta)$ weakly, and hence in the Levy--Prokhorov topology. This proves continuity.
\end{proof}

\begin{remark}
The point of the separability hypothesis is precisely to justify the identification between the Levy--Prokhorov topology and the weak topology, both on $\mathcal{B}_n(Z)$ and then again on $\mathcal{B}_m(\mathcal{B}_n(Z))$. The notation $\mathcal{B}_m(\mathcal{B}_n(Z))$ means measures on the metric space $\mathcal{B}_n(Z)$, where $\mathcal{B}_n(Z)$ has already been endowed with its Levy--Prokhorov topology.
\end{remark}

\subsection{A composition inequality}
We are now all set to prove our main result of the section.
\begin{theorem}[Composition inequality]\label{prop: composition inequality}
Let $W\xrightarrow{\theta}V\xrightarrow{\rho}E$
be fibrations between separable metric spaces. Then
\[
 \dsecat(\rho\circ\theta)+1
 \leq
 \bigl(\dsecat(\rho)+1\bigr)
 \bigl(\dsecat(\theta)+1\bigr).
\]
\end{theorem}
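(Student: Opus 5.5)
Set $m=\dsecat(\rho)+1$ and $n=\dsecat(\theta)+1$; if either is infinite there is nothing to prove. The plan is to build a section of $\mathcal{B}_{mn}(\rho\circ\theta)$ by composing three maps: a section of $\mathcal{B}_m(\rho)$, the pushforward of a section of $\mathcal{B}_n(\theta)$, and the barycentre map of the previous lemma. Concretely, fix continuous sections $s\colon E\to E_m(\rho)\subseteq\mathcal{B}_m(V)$ and $t\colon V\to E_n(\theta)\subseteq\mathcal{B}_n(W)$. Thus for $e\in E$, $s(e)=\sum_j a_j\delta_{v_j}$ with $v_j\in\rho^{-1}(e)$, and for $v\in V$, $t(v)$ is supported in $\theta^{-1}(v)$ with at most $n$ points.

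The first step is to push $s(e)$ forward along $t$. This gives
\[
t_*(s(e))=\sum_j a_j\delta_{t(v_j)}\in\mathcal{B}_m(\mathcal{B}_n(W)).
\]
The pushforward $t_*\colon\mathcal{B}_m(V)\to\mathcal{B}_m(\mathcal{B}_n(W))$ along a continuous map is continuous in the weak topology, since for $g\in C_b(\mathcal{B}_n(W))$ one has $\int g\,d(t_*\mu)=\int (g\circ t)\,d\mu$ with $g\circ t\in C_b(V)$. Separability of $V$, $W$ and $\mathcal{B}_n(W)$ (noted earlier in the section) lets us identify the L\'evy--Prokhorov and weak topologies. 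The support of $t_*(s(e))$ has at most $m$ points even if some $t(v_j)$ coincide, because coinciding atoms merge.

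The second step is to apply the barycentre map, defining
\[
\sigma=\bary\circ t_*\circ s\colon E\to\mathcal{B}_{mn}(W),\qquad \sigma(e)=\sum_j a_j\,t(v_j).
\]
By the barycentre lemma this map is continuous with values in $\mathcal{B}_{mn}(W)$. Its support lies in $\bigcup_j\supp t(v_j)\subseteq\bigcup_j\theta^{-1}(v_j)\subseteq(\rho\theta)^{-1}(e)$, because each $v_j\in\rho^{-1}(e)$. Hence $\sigma(e)\in E_{mn}(\rho\circ\theta)$ and $\mathcal{B}_{mn}(\rho\circ\theta)(\sigma(e))=e$. Continuity into the subspace $E_{mn}(\rho\circ\theta)$ is automatic. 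This gives $\dsecat(\rho\circ\theta)\le mn-1$, which is the claimed inequality.

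The main obstacle is the topological bookkeeping in the first step. We must check that $t_*$ is continuous for the L\'evy--Prokhorov topology on measures over $\mathcal{B}_n(W)$, which is itself a metric space under L\'evy--Prokhorov. This is exactly where separability is used, and I would handle it via the weak-topology characterization as indicated. One should also remark that $\rho\circ\theta$ is a fibration, so that $\dsecat(\rho\circ\theta)$ is defined directly; alternatively, no fibration property is needed for the section construction itself.
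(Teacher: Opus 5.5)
Your proposal is correct and follows essentially the same route as the paper's proof. The paper likewise takes sections of $\mathcal{B}_m(\rho)$ and $\mathcal{B}_n(\theta)$ and forms the composite $\bary\circ\tau_*\circ\sigma\colon E\to\mathcal{B}_{mn}(W)$. It gets continuity from the weak-topology characterization, using separability, and checks that the support lies in $(\rho\circ\theta)^{-1}(e)$, exactly as you do.
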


\begin{proof}
If either invariant on the right-hand side is infinite, there is nothing to prove. Put
\[
 m=\dsecat(\rho)+1,
 \qquad
 n=\dsecat(\theta)+1.
\]
By definition, there exist continuous sections
$\sigma\colon E\longrightarrow E_m(\rho)$
of
$\mathcal{B}_m(\rho)\colon E_m(\rho)\longrightarrow E
$
and
$\tau\colon V\longrightarrow E_n(\theta)$
of
$\mathcal{B}_n(\theta)\colon E_n(\theta)\longrightarrow V.$
Using the inclusions
$E_m(\rho)\subseteq B_m(V)$ and $E_n(\theta)\subseteq \mathcal{B}_n(W),$
we regard $\sigma$ as a continuous map $E\to \mathcal{B}_m(V)$ and $\tau$ as a continuous map $V\to \mathcal{B}_n(W)$.

Define
\[
 s\colon E\longrightarrow \mathcal{B}_{mn}(W),
 \qquad
 s(x)=\bary\bigl(\tau_*\sigma(x)\bigr),
\]
where
$\tau_*:\mathcal{B}_m(V)\longrightarrow \mathcal{B}_m(\mathcal{B}_n(W))$
is the push-forward induced by the fixed continuous map $\tau$.

Explicitly, if
\[
 \sigma(x)=\sum_{j=1}^r a_j\delta_{v_j},
 \qquad r\leq m,
 \qquad v_j\in \rho^{-1}(x),
\]
then
\[
 \tau_*\sigma(x)=\sum_{j=1}^r a_j\delta_{\tau(v_j)}
\quad \text{ and } \quad
 s(x)=\sum_{j=1}^r a_j\tau(v_j).
\]

The map $s$ is continuous because it is the composite
\[
 E
 \xrightarrow{\sigma}
 \mathcal{B}_m(V)
 \xrightarrow{\tau_*}
 \mathcal{B}_m(\mathcal{B}_n(W))
 \xrightarrow{\bary}
 \mathcal{B}_{mn}(W).
\]
Here $\sigma$ is continuous by construction; $\tau_*$ is continuous because push-forward along a fixed continuous map preserves weak convergence; and $\bary$ is continuous by the previous lemma.

We now check that $s$ actually lands in $E_{mn}(\rho\circ\theta)$. Since $\tau$ is a section of $B_n(\theta)$, each measure $\tau(v_j)$ is supported in $\theta^{-1}(v_j)$. Hence
\[
 \supp(s(x))
 \subseteq
 \bigcup_{j=1}^r \theta^{-1}(v_j)
 \subseteq
 \theta^{-1}(\rho^{-1}(x))
 =
 (\rho\circ\theta)^{-1}(x).
\]
Moreover, the support of $s(x)$ has cardinality at most $mn$. Therefore
\[
 s(x)\in E_{mn}(\rho\circ\theta).
\]
Thus $s$ defines a continuous section
$s\colon E\longrightarrow E_{mn}(\rho\circ\theta)$
of $\mathcal{B}_{mn}(\rho\circ\theta)\colon E_{mn}(\rho\circ\theta)\longrightarrow E.$
Consequently,
$\dsecat(\rho\circ\theta)\leq mn-1,$
which is the desired inequality.
\end{proof}

\section{Structural consequences of distributional homotopic distance}\label{sec: distributional homotopic distance}
A sequential distributional homotopic distance was recently introduced by Jauhari and Oprea via a pullback construction. The formulation used here, in terms of distributed homotopies, was obtained independently. In the two-map case, Proposition~\ref{dD-dsecat} shows that it agrees with their invariant. This intrinsic description allows the general results established above for distributional sectional category to be applied directly.

In this section, we introduce and develop the notion of distributional homotopic distance from our perspective, establish its fundamental properties, and relate it to the distributional sectional category introduced in the previous section. We note that most of the results presented in this section are recollections of results from Jauhari--Oprea \cite{O-J}, where they appear in slightly different or more general forms. In their work, these results are obtained as consequences of their definition of distributional homotopic distance via a pullback construction. In contrast, our approach is based on the formulation in terms of distributed homotopies. The only exceptions are Propositions~\ref{cohomological lb2} and~\ref{prop: cohomological lower bd on dD with rational coeff}, which are new.

Given a space $Y$, we denote by $PY = Y^{I}$ the space of paths equipped with the compact--open topology.  
The bi-evaluation map
\[
\pi_Y \colon PY \longrightarrow Y \times Y, \qquad 
\pi_Y(\alpha) = (\alpha(0),\, \alpha(1)),
\]
is the usual free path fibration.  
For any pair of points $y_0, y_1 \in Y$, we write
\[
P(y_0, y_1)
:= \{\alpha \in PY \mid \alpha(0) = y_0,\; \alpha(1) = y_1\},
\]
endowed with the subspace topology inherited from $PY$.

\begin{definition}
Let $f,g\colon X\to Y$ be maps between metric spaces. The \emph{distributional homotopic distance} between $f$ and $g$, denoted $\mathrm{dD}(f,g),$ is the least nonnegative integer $n\geq 0$ such that there exists a continuous map
$s\colon X\to \mathcal{B}_{n+1}(PY)$ \noindent satisfying that $$s(x)\in \mathcal{B}_{n+1}(P(f(x),g(x))),$$ \noindent for all $x\in X.$ 

In other words, the \emph{distributional homotopic distance} between $f$ and $g$ is the least nonnegative integer $n$ such that there exists an $(n+1)$-distributed homotopy between them.
\end{definition}

Observe that the image of the map $s$ necessarily lies in $E_{n+1}(PY)$ and satisfies the equivalent commutativity relation
$$\xymatrix{
 & {X} \ar[dr]^{(f,g)} \ar[dl]_s  & \\
 {E_{n+1}(PY)} \ar[rr]_{\mathcal{B}_{n+1}(\pi _Y)} & & {Y\times Y}}$$
which simply expresses that $s$ lifts the pair $(f,g)$ through the iterated fibration. 

The notion of distributional homotopic distance extends both the distributional topological complexity and the distributional category of a space. Indeed, if $X$ is any path-connected metric space, then
\begin{enumerate}
\item $\mathrm{dcat}(X)=\mathrm{dD}(\mathrm{id}_X,c_{x_0}),$ where $c_{x_0}\colon X\to X$ denotes the constant map at a distinguished point $x_0\in X;$

\item More generally, if $f\colon X\to Y$ is a map, then $\mathrm{dcat}(f)=\mathrm{dD}(f,c_{y_0}),$ where $c_{y_0}\colon X\to Y$ is the constant map at $y_0\in Y;$

\item $\mathrm{dTC}(X)=\mathrm{dD}(\mathrm{pr}_1,\mathrm{pr}_2),$ where $\mathrm{pr}_1, \mathrm{pr}_2\colon X\times X\to X$ are the canonical projections.
\end{enumerate}

\bigskip
We now establish some basic properties of the distributional homotopic distance.

\begin{proposition}
Let $f,g\colon X\to Y$ be maps. Then $\mathrm{dD}(f,g)$ satisfies:
\begin{enumerate}
   
    \item $\mathrm{dD}(f,g)=0$ if and only if $f\simeq g$.

    \item $\mathrm{dD}(f,g)=\mathrm{dD}(g,f)$.

    \item If $f\simeq f'$ and $g\simeq g'$, then $\mathrm{dD}(f,g)=\mathrm{dD}(f',g')$.
\end{enumerate}
\end{proposition}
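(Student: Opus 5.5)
The plan is to use the Dirac measure on a path to handle homotopies, and push-forward along maps of path spaces to transport distributed homotopies.

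\emph{Part (1).} If $f\simeq g$ via $H\colon X\times I\to Y$, define $s(x)=\delta_{H(x,-)}$. The adjoint $x\mapsto H(x,-)\in PY$ is continuous by the exponential law for the compact--open topology, since $I$ is locally compact. The Dirac embedding $Z\to\mathcal{B}_1(Z)$ is continuous; indeed $d_{LP}(\delta_z,\delta_w)\le d(z,w)$. Hence $s$ is a $1$-distributed homotopy and $\dD(f,g)=0$.

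Conversely, if $\dD(f,g)=0$, then $s(x)\in\mathcal{B}_1(P(f(x),g(x)))$ is a Dirac measure $\delta_{\gamma_x}$. The map $\mathcal{B}_1(PY)\to PY$, $\delta_\gamma\mapsto\gamma$, is a homeomorphism onto its image, since $d_{LP}(\delta_z,\delta_w)=\min\{d(z,w),1\}$. So $x\mapsto\gamma_x$ is continuous, and $H(x,t)=\gamma_x(t)$ is continuous by the exponential law. This $H$ is a homotopy from $f$ to $g$.

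\emph{Part (2).} Let $\bar{\ }\colon PY\to PY$ denote path reversal, $\gamma\mapsto\gamma(1-\,\cdot\,)$. It is a homeomorphism sending $P(y_0,y_1)$ onto $P(y_1,y_0)$. Its push-forward $\mathcal{B}_{n+1}(PY)\to\mathcal{B}_{n+1}(PY)$ is continuous, because push-forward along a continuous map preserves weak convergence; in the separable case this is the same fact already used for $\tau_*$. The push-forward also preserves support size. Composing a distributed homotopy from $f$ to $g$ with it gives one from $g$ to $f$. By symmetry, $\dD(f,g)=\dD(g,f)$.

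\emph{Part (3).} This is the main step. Let $F$ be a homotopy from $f'$ to $f$ and $G$ a homotopy from $g$ to $g'$, with $\phi_x=F(x,-)$ and $\psi_x=G(x,-)$. Given $s$ for $(f,g)$, define
\[
s'(x)=(c_x)_*s(x), \qquad c_x(\gamma)=\phi_x*\gamma*\psi_x,
\]
the concatenation of three paths. Each atom of $s(x)$ then lands in $P(f'(x),g'(x))$, and the support size does not increase. So it suffices to show $s'$ is continuous, which I expect to be the main obstacle.

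For continuity, consider
\[
C\colon X\times PY\to PY,\qquad C(x,\gamma)=\phi_x*\gamma*\psi_x .
\]
It is jointly continuous, because concatenation is continuous where defined and $x\mapsto\phi_x$, $x\mapsto\psi_x$ are continuous. Then $s'(x)=C(x,-)_*s(x)$. To check continuity at $x_0$, write $s(x_0)=\sum a_j\delta_{\gamma_j}$. Near $x_0$, the atoms of $s(x)$ cluster near the $\gamma_j$ with masses close to $a_j$, by the argument in the lemma on continuity of $r_{i,N}$ or the tensor-product lemma. Joint continuity of $C$ then puts the atoms of $s'(x)$ near $C(x_0,\gamma_j)$ with the same masses. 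This gives Lévy--Prokhorov closeness directly, with no separability needed. Alternatively, in the weak-topology language, one can test against $h\circ C$ as in the lemma on continuity of the tensor product.

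Hence $\dD(f',g')\le\dD(f,g)$. Symmetrically, using the reversed homotopies, $\dD(f,g)\le\dD(f',g')$, which gives equality.
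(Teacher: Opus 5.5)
Your proposal is correct and follows essentially the same route as the paper: Dirac measures for (1), push-forward along path reversal for (2), and atomwise concatenation $\bar G_1(x)*\phi_x*G_2(x)$ for (3). Your version also supplies the continuity details that the paper only asserts. Two points of precision, neither affecting the argument. First, $C(x,\gamma)=\phi_x*\gamma*\psi_x$ is only defined on $\{(x,\gamma):\gamma\in P(f(x),g(x))\}$, not on all of $X\times PY$; this is harmless because $s(x)$ is supported there. Second, near $x_0$ the measure $s(x)$ may have more atoms than $s(x_0)$, so what clusters near the $\gamma_j$ is mass, with a small remainder possibly far away, as in the tensor-product lemma, rather than atoms in bijection as in the $r_{i,N}$ lemma.
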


\begin{proof}
(1) The case $\mathrm{dD}(f,g)=0$ corresponds exactly to the existence of a $1$-distributed homotopy between $f$ and $g$, which is the same as an ordinary homotopy. Hence $\mathrm{dD}(f,g)=0$ if and only if $f\simeq g$.

\smallskip

(2) Define a map $\psi\colon PY\to PY$ by path reversal: $\psi(\gamma)=\bar{\gamma}$, where $\bar{\gamma}(t)=\gamma(1-t)$.  
By functoriality, this induces for each $k\ge 0$ a map
\[
\psi_*\colon \mathcal{B}_k(PY)\longrightarrow \mathcal{B}_k(PY).
\]
We first show $\mathrm{dD}(g,f)\le \mathrm{dD}(f,g)$.  
Assume $\mathrm{dD}(f,g)=n$. Then there exists an $(n+1)$-distributed homotopy
$H\colon X \longrightarrow \mathcal{B}_{n+1}(PY)
$
such that $H(x)\in \mathcal{B}_{n+1}(P(f(x),g(x)))$ for every $x\in X$.
Composing with $\psi_*$ we obtain
$H'=\psi_*\circ H \colon X\to \mathcal{B}_{n+1}(PY),
$
and by construction  
\[
H'(x)\in \mathcal{B}_{n+1}(P(g(x),f(x))).
\]
Thus $\mathrm{dD}(g,f)\le n=\mathrm{dD}(f,g)$.

The reverse inequality follows by applying the same argument with $f$ and $g$ interchanged. Therefore $\mathrm{dD}(f,g)=\mathrm{dD}(g,f)$.

\smallskip

(3) Suppose $dD(f,g)=n$. Consider the $(n+1)$-distributed homotopy $H\colon X\to \mathcal{B}_{n+1}(PY)$ such that $H(x)\in \mathcal{B}_{n+1}(P(f(x),g(x)))$. Assume $H(x)$ has the following description $$H(x)=\sum_{\phi_x}\lambda_{\phi_x}\phi_x,$$ where $\phi_x$ is a path between $f(x)$ and $g(x)$.
    Denote the homotopies $G_1\colon f\simeq f'$ and $G_2: g\simeq g'$. We consider $G_i\colon X\to PY$. Then define $H'\colon X\to \mathcal{B}_{n+1}(PY)$ by 
    $$ 
    H'(x):=\sum_{\phi_x}\lambda_{\phi_x}\bar{G}_1(x)\ast \phi_x \ast G_2(x),
    $$
    where $\bar{G}_1(x)$ denotes the reverse path between $f'(x)$ and $f(x)$. Note that $H'$ is continuous and  satisfies $H'(x)\in \mathcal{B}_{n+1}(P(f'(x), g'(x)))$ for all $x\in X$. This proves $dD(f',g')\leq dD(f,g)$. The other inequality follows similarly.
\end{proof}

\begin{example}
\normalfont{
Let $f,g\colon S^n\to Y$ be continuous maps. Since $D(f,g)\leq \ct(S^n)=1$, it follows that if $f$ and $g$ are not homotopic, then
$1\leq \mathrm{dD}(f,g)\leq \mathrm{D}(f,g)\leq 1.$
Hence, $\mathrm{dD}(f,g)=1$.
In particular,
\[
\mathrm{dD}(id_{S^n},-id_{S^n})=
\begin{cases}
1, & \text{if } n \text{ is even},\\
0, & \text{if } n \text{ is odd},
\end{cases}
\]
since the identity and antipodal maps are homotopic if and only if $n$ is odd.}
\end{example}

\bigskip
The distributional homotopic distance is designed to measure, in a probabilistic sense, how far two maps are from being homotopic at each point.  
Since homotopies between $f$ and $g$ are encoded by paths in $Y$, it is natural to expect a close connection with the free path fibration.  
The next result formalizes this idea: the difficulty of synchronizing $f$ and $g$ distributionally is exactly the difficulty of selecting, measurably and locally, path-lifts through the pullback of the free path fibration along $(f,g)$.

\begin{proposition}\label{dD-dsecat}
Let $f,g : X \to Y$ be maps between metric spaces, and consider the pullback
\[
\xymatrix{
\mathcal{P}(f,g) \ar[r] \ar[d]_{p_{f,g}=\overline{\pi}_Y} &
PY \ar[d]^{\pi_Y} \\
X \ar[r]_{(f,g)} &
Y \times Y.
}
\]
Then
$\mathrm{dD}(f,g) \;=\; \mathrm{dsecat}(p_{f,g}).$
\end{proposition}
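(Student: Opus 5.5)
Since $\dsecat(p_{f,g})$ is the least $n$ for which $\mathcal{B}_{n+1}(p_{f,g})$ admits a section, and $\dD(f,g)$ is the least $n$ admitting an $(n+1)$-distributed homotopy, the plan is to show directly that, for each $k\ge 1$, sections of $\mathcal{B}_k(p_{f,g})$ correspond bijectively to continuous maps $s\colon X\to\mathcal{B}_k(PY)$ with $s(x)\in\mathcal{B}_k(P(f(x),g(x)))$. Both minima then agree. I will use the concrete model
\[
\mathcal{P}(f,g)=\{(x,\gamma)\in X\times PY : \gamma(0)=f(x),\ \gamma(1)=g(x)\},
\]
metrized as a subspace of $X\times PY$ (say with the max metric), with $p_{f,g}(x,\gamma)=x$ and second projection $\mathrm{pr}\colon\mathcal{P}(f,g)\to PY$.

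First, given a section $\sigma$ of $\mathcal{B}_k(p_{f,g})$, I set $s=\mathrm{pr}^k_*\circ\sigma$, where $\mathrm{pr}^k_*$ is the push-forward from the functoriality recalled in Section~\ref{sec: prelim}. It is continuous because push-forward along a continuous map is continuous; for the Lévy--Prokhorov metric one can note that $\mathrm{pr}$ is $1$-Lipschitz, and Lipschitz push-forward does not increase $d_{\mathrm{LP}}$. Since $\sigma(x)$ is supported on $p_{f,g}^{-1}(x)=\{x\}\times P(f(x),g(x))$, the measure $s(x)$ lies in $\mathcal{B}_k(P(f(x),g(x)))$. Thus $\dD(f,g)\le\dsecat(p_{f,g})$.

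Conversely, given $s$, I define $\sigma(x)=\sum_i a_i\,\delta_{(x,\gamma_i)}$ when $s(x)=\sum_i a_i\delta_{\gamma_i}$, i.e. $\sigma(x)=(\iota_x)_*s(x)$ with $\iota_x(\gamma)=(x,\gamma)$. This is a section set-theoretically, and it lands in $E_k(p_{f,g})$. The main obstacle is continuity of $\sigma$, since $\iota_x$ varies with $x$. I will verify it directly with the Lévy--Prokhorov metric.

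If $d_X(x,x')<\varepsilon$ and $d_{\mathrm{LP}}(s(x),s(x'))<\varepsilon$, then for any Borel $A\subseteq X\times PY$, let $A_2$ be the set of paths $\gamma$ with $(x,\gamma)\in A$. Then
\[
\sigma(x)(A)=s(x)(A_2)\le s(x')(A_2^{\varepsilon})+\varepsilon,
\]
and $\{x'\}\times A_2^{\varepsilon}\subseteq A^{\varepsilon}$ for the max metric. Hence $\sigma(x)(A)\le\sigma(x')(A^{\varepsilon})+\varepsilon$, so $d_{\mathrm{LP}}(\sigma(x),\sigma(x'))\le\varepsilon$ by the symmetric argument. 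Together with the continuity of $s$, this gives the continuity of $\sigma$ and hence $\dsecat(p_{f,g})\le\dD(f,g)$.

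Finally, if $p_{f,g}$ is formed by another pullback model, it is homotopy equivalent over $X$ to this one. Proposition~\ref{primero} then shows that the value of $\dsecat$ does not depend on the model.
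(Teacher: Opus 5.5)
Your argument is correct and is essentially the paper's proof: both directions use the same correspondence of sections, composing a section of $\mathcal{B}_{k}(p_{f,g})$ with the canonical map $E_k(p_{f,g})\to E_k(\pi_Y)$ in one direction and inducing $\sigma$ from $s$ in the other. The only difference is that the paper gets continuity of $\sigma$ from the universal property of the pullback square in Proposition~\ref{pullbacks}, whereas your L\'evy--Prokhorov estimate checks that continuity by hand, which is exactly the content of that proposition in this case.
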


\begin{proof}
Assume first that $\mathrm{dD}(f,g)=n$.  
Then there exists a map $s\colon X \to E_{n+1}(PY)$ such that
$\mathcal{B}_{n+1}(\pi_Y) \circ s \;=\; (f,g).$
By the universal property of the pullback, this induces a unique map
$\sigma \colon X \longrightarrow E_{n+1}(\mathcal{P}(f,g))$
fitting into the diagram
\[
\xymatrix{
X \ar@/^1pc/[drr]^{s}
  \ar@/_1pc/[ddr]_{\mathrm{id}_X}
  \ar@{.>}[dr]^{\sigma}
&&
\\
& E_{n+1}(\mathcal{P}(f,g))
    \ar[r]
    \ar[d]^{\mathcal{B}_{n+1}(p_{f,g})}
& E_{n+1}(PY) \ar[d]^{\mathcal{B}_{n+1}(\pi_Y)}
\\
& X \ar[r]_{(f,g)} & Y \times Y.
}
\]
Thus $\sigma$ is a section of $\mathcal{B}_{n+1}(p_{f,g})$, and therefore  
$\mathrm{dsecat}(p_{f,g})\le n.$
Conversely, suppose $\mathrm{dsecat}(p_{f,g})=n$, so that there exists a section of $\mathcal{B}_{n+1}(p_{f,g})$
\[
\sigma : X \longrightarrow E_{n+1}(\mathcal{P}(f,g)).
\]
Composing with the canonical map  
\(E_{n+1}(\mathcal{P}(f,g)) \to E_{n+1}(PY)\)  
gives a map \(s : X \to E_{n+1}(PY)\subseteq \mathcal{B}_{n+1}(PY)\) satisfying
$\mathcal{B}_{n+1}(\pi_Y)\circ s \;=\; (f,g),
$
whence \(s(x)\in \mathcal{B}_{n+1}(P(f(x),g(x)))\) for every \(x\in X\).  
Thus \(\mathrm{dD}(f,g)\le n\).
Combining both inequalities yields the desired equality.
\end{proof}

\begin{remark}
Jauhari--Oprea define the distributional homotopic distance via the pullback construction described in Proposition~\ref{dD-dsecat}. 
\end{remark}

The following statement is an immediate consequence of the previous proposition.

\begin{corollary}
Suppose $f,g\colon X\to Y$ are maps. Then we have
\begin{enumerate}
\item $\mathrm{dD}(f,g)\leq \mathrm{D}(f,g)$, and

\

\item 
$\mathrm{dD}(f,g)\leq \mathrm{dTC}(Y).$
\end{enumerate}
\end{corollary}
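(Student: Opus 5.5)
Both inequalities follow from Proposition~\ref{dD-dsecat}, which identifies $\mathrm{dD}(f,g)$ with $\mathrm{dsecat}(p_{f,g})$, where $p_{f,g}\colon \mathcal{P}(f,g)\to X$ is the pullback of $\pi_Y$ along $(f,g)$. Each bound is then a comparison statement for $\mathrm{dsecat}$ already available in Section~\ref{sec: prelim}.

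For (2), I will apply Proposition~\ref{segundo} to the pullback square defining $p_{f,g}$. It gives $\mathrm{dsecat}(p_{f,g})\le \mathrm{dsecat}(\pi_Y)$. Since $\mathrm{dsecat}(\pi_Y)=\mathrm{dTC}(Y)$ by definition of distributional topological complexity, we get $\mathrm{dD}(f,g)\le\mathrm{dTC}(Y)$. Alternatively, one can argue directly: if $s$ is a section of $\mathcal{B}_{n+1}(\pi_Y)$, then $s\circ(f,g)$ is an $(n+1)$-distributed homotopy from $f$ to $g$, by the definition of $\mathrm{dD}$.

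For (1), I will use that the classical homotopic distance satisfies $\mathrm{D}(f,g)=\mathrm{secat}(p_{f,g})$. This is the standard characterisation: $\mathrm{D}(f,g)\le n$ means there is a cover by $n+1$ open sets on each of which $f\simeq g$, and such a homotopy is exactly a local section of $p_{f,g}$. I will then invoke the inequality $\mathrm{dsecat}\le\mathrm{secat}$ recalled in Section~\ref{sec: prelim}, which is proved via the canonical map from the fibrewise join $*^{k}_X\mathcal{P}(f,g)\to E_k(p_{f,g})$.

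There is a more elementary route for (1) that avoids joins. Take open sets $U_0,\dots,U_n$ with homotopies $H_i\colon U_i\to PY$ from $f$ to $g$, and a partition of unity $\{\phi_i\}$ subordinate to the cover. Define
\[
s(x)=\sum_i \phi_i(x)\,\delta_{H_i(x)}.
\]
This is an $(n+1)$-distributed homotopy, and it is continuous in the Lévy--Prokhorov metric. Continuity holds because the weights vary continuously, and points carrying zero weight may leave their domain without creating a discontinuity.

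The only delicate point is this existence of a partition of unity, which requires paracompactness of the base $X$. The inequality $\mathrm{dsecat}\le\mathrm{secat}$ was stated under the same hypothesis. Since $X$ is a metric space in the definition of $\mathrm{dD}$, it is paracompact, so the hypothesis is satisfied automatically. The rest is immediate.
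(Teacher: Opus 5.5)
Your proof is correct and follows the paper's argument: (2) comes from Proposition~\ref{dD-dsecat} combined with Proposition~\ref{segundo}, and (1) from $\mathrm{D}(f,g)=\sct(p_{f,g})$ together with $\dsecat\le\sct$ on the paracompact (metric) base $X$. Your partition-of-unity alternative for (1) is also valid, since the L\'evy--Prokhorov distance is bounded by the total variation distance and so atoms of small weight cost little; it has the advantage of working directly from the open-cover definition of $\mathrm{D}$ rather than relying on those two cited facts.
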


\begin{proof}
The inequality  $\mathrm{dD}(f,g)\leq \mathrm{D}(f,g)$ follows using the equality $D(f,g)=\sct(p_{f,g})$, from \cite[Theorem 2.7]{macias2022homotopic} and the inequality $\mathrm{dsecat}\leq \sct$ from \cite{Jau1}. 

To show the inequality $\mathrm{dD}(f,g)\leq \mathrm{dTC}(Y)$, just consider Proposition \ref{dD-dsecat} above and Proposition \ref{segundo}.   
\end{proof}

Suppose $f,g\colon X\to Y$ are continuous maps. The cohomological lower bound on their sequential distributional homotopic distance was established by Jauhari and Oprea in \cite[Section 6]{O-J}. Their approach is motivated by the original ideas of Mac{\'i}as--Virg{\'o}s and Mosquera--Lois \cite{macias2022homotopic}. In what follows, we use the cohomological lower bound on the distributional sectional category to obtain the different cohomological lower bound on the distributional homotopic distance.

Consider the following commutative diagram with the naturally induced maps.

\small\[
\xymatrix@C=1.6em @R=1.6em{
 & SP^{n!}(\mathcal{P}_{f,g}) \ar[rr]^{SP^{n!}(\phi)} \ar[lddd]_{SP^{n!}(q)} 
 && SP^{n!}(PY) \ar[rddd]^{SP^{n!}(\pi_Y)} & \\
 & \mathcal{P}_{f,g} \ar[u] \ar[rr]^{\phi} \ar[d]_q 
 && PY \ar[u]^{\delta_{n!}^{PY}} \ar[d]^{\pi_Y} & \\
 & X \ar[rr]^{(f,g)} \ar[ld]_{\delta_{n!}^X} 
 && Y\times Y \ar[rd]^{\delta_{n!}^{Y\times Y}} & \\
SP^{n!}(X) \ar[rrrr]_{SP^{n!}(f,g)} 
 &&&& SP^{n!}(Y\times Y).
}
\]
With the notations as in the above diagram, we establish a cohomological lower bound on the distributional homotopic distance. We will again use the Alexander--Spanier cohomology.
\begin{proposition}\label{cohomological lb2}
Suppose there exists $z_i\in H^*(SP^{n!}(Y\times Y);R)$ such that $SP^{n!}(\pi_Y)^*(z_i)= 0$ and $$(\delta_{n!}^X)^*(SP^{n!}(f,g)^*(z_1))\smile\dots \smile (\delta_{n!}^X)^*(SP^{n!}(f,g)^*(z_n))\neq 0.$$ Then 
\[
\mathrm{dD}(f,g)\geq n.
\]
\end{proposition}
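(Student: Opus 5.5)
The plan is to reduce the statement to Theorem~\ref{cohomological lower bound on dsecat}, applied to the pulled-back fibration $q=p_{f,g}\colon \mathcal{P}_{f,g}\to X$. By Proposition~\ref{dD-dsecat} we have $\mathrm{dD}(f,g)=\mathrm{dsecat}(q)$. It therefore suffices to produce classes $\beta_i\in H^*(SP^{n!}(X);R)$ with $SP^{n!}(q)^*(\beta_i)=0$ whose restrictions $\alpha_i=(\delta^X_{n!})^*(\beta_i)$ have nonzero cup product.

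The natural choice is $\beta_i:=SP^{n!}(f,g)^*(z_i)$. The second hypothesis then says exactly that $\alpha_1\smile\dots\smile\alpha_n\neq 0$, where $\alpha_i=(\delta^X_{n!})^*(\beta_i)$. For the vanishing condition, use the outer square of the diagram preceding the statement. Since $(f,g)\circ q=\pi_Y\circ\phi$ holds strictly in the pullback, functoriality of $SP^{n!}$ gives
\[
SP^{n!}(f,g)\circ SP^{n!}(q)=SP^{n!}(\pi_Y)\circ SP^{n!}(\phi).
\]
Hence
\[
SP^{n!}(q)^*(\beta_i)=SP^{n!}(\phi)^*\bigl(SP^{n!}(\pi_Y)^*(z_i)\bigr)=0.
\]
Theorem~\ref{cohomological lower bound on dsecat} now yields $\mathrm{dsecat}(q)\geq n$, and so $\mathrm{dD}(f,g)\geq n$.

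The main point needing care is that Theorem~\ref{cohomological lower bound on dsecat} is stated for a fibration and uses the section-of-$\mathcal{B}_n$ description through Proposition~\ref{lem: dsecat cover}. Here $q$ is a pullback of the fibration $\pi_Y$, so it is itself a fibration. Moreover $\mathcal{P}_{f,g}$ is a subspace of $X\times PY$, which is metrizable once $X$ and $Y$ are metric, so the measure-theoretic constructions make sense. Everything is done in Alexander--Spanier cohomology, consistent with that theorem, and the naturality of the induced maps on cohomology is all that the diagram chase requires. One could alternatively argue directly: given a section realizing $\mathrm{dD}(f,g)<n$, build the cover $A_1,\dots,A_n$ of $X$ and the lifts into $SP^{n!}(PY)$ as in Proposition~\ref{lem: dsecat cover}, then repeat the long exact sequence argument. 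The reduction above avoids redoing this.
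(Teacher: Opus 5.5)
Your proposal is correct and is essentially the paper's own proof. You set $\beta_i=SP^{n!}(f,g)^*(z_i)$ and use the strict identity $SP^{n!}(f,g)\circ SP^{n!}(q)=SP^{n!}(\pi_Y)\circ SP^{n!}(\phi)$ to get $\beta_i\in\ker SP^{n!}(q)^*$. You then combine $\mathrm{dD}(f,g)=\mathrm{dsecat}(q)$ from Proposition~\ref{dD-dsecat} with Theorem~\ref{cohomological lower bound on dsecat}, exactly as the paper does. Your added checks, that $q$ is a fibration as a pullback of $\pi_Y$ and that $\mathcal{P}_{f,g}$ is metrizable, only make explicit what the paper leaves implicit.
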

\begin{proof}
Note that $SP^{n!}(f,g)\circ SP^{n!}(q)=SP^{n!}(\pi_Y)\circ \phi_n$. Therefore, if $z_i\in \mathrm{ker}(SP^{n!}(\pi_Y)^*)$, then $$SP^{n!}(q)^*\circ SP^{n!}(f,g)^*(z_i)=\phi_n^*\circ SP^{n!}(\pi_Y)^*(z_i)=0.$$ 
This implies $SP^{n!}(f,g)^*(z_i)\in \mathrm{ker}(SP^{n!}(q)^*)$. We also have $$(\delta_{n!}^X)^*(SP^{n!}(f,g)^*(z_1))\cup\dots \cup (\delta_{n!}^X)^*(SP^{n!}(f,g)^*(z_n))\neq 0.$$ 
Note that from Proposition \ref{dD-dsecat} we have $\mathrm{dD}(f,g)=\mathrm{dsecat}(q)$. Then the inequality $\mathrm{dD}(f,g)\geq n$ follows from Theorem \ref{cohomological lower bound on dsecat}. 
\end{proof}

We will now provide another cohomological lower bound on the distributional homotopic distance using cohomology with rational coefficients. 

\begin{proposition}\label{prop: cohomological lower bd on dD with rational coeff}
Let $f,g\colon X\to Y$ be maps between path-connected, locally finite simplicial complexes.
Suppose $\Delta\colon Y\to Y\times Y$ is a diagonal map and there exist classes $z_1, \dots, z_n \in H^*(Y \times Y; \Q)$ such that 
\((\Delta)^*(z_i) = 0\) for all $1 \leq i \leq n$, and 
\((f, g)^*(z_1 \smile \dots \smile z_n) \neq 0\).  
Then
\[
\dD(f, g) \;\geq\; n.
\]  
\end{proposition}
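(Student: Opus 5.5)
The plan is to reduce to the rational cup-length bound of Theorem~\ref{thm:dsecat}, applied to the pullback fibration $p_{f,g}=q\colon \mathcal{P}(f,g)\to X$. By Proposition~\ref{dD-dsecat}, $\dD(f,g)=\dsecat(q)$. It therefore suffices to produce $n$ positive-degree classes in $\ker q^*\subseteq H^*(X;\Q)$ whose product is nonzero. The natural candidates are $u_i=(f,g)^*(z_i)$.

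\emph{Membership in the kernel.} First I would check that each $u_i\in\ker q^*$. The pullback square gives $q^*(f,g)^*=\phi^*\pi_Y^*$. The map $c\colon Y\to PY$ sending $y$ to the constant path is a homotopy equivalence with $\pi_Y\circ c=\Delta$. Hence $\pi_Y^*(z_i)=0$ if and only if $\Delta^*(z_i)=0$, which is the hypothesis. So $q^*(u_i)=\phi^*\pi_Y^*(z_i)=0$.

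\emph{Nonvanishing of the product.} Since $(f,g)^*$ is an algebra map, $u_1\smile\dots\smile u_n=(f,g)^*(z_1\smile\dots\smile z_n)\neq0$. Positive degree of the $u_i$ is automatic: a degree-zero $z_i$ with $\Delta^*z_i=0$ vanishes because $Y$ is path-connected, so all $z_i$ have positive degree. Thus $\cl_\Q(\ker q^*)\ge n$.

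\emph{Applying Theorem~\ref{thm:dsecat}.} The main obstacle is verifying its hypotheses: $q$ must be a fibration between path-connected spaces of locally finite CW type. The base $X$ is fine, and $q$ is a fibration as a pullback of $\pi_Y$. The total space $\mathcal{P}(f,g)$ is only homotopy equivalent to the homotopy pullback of $(f,g)$ and $\Delta$, which need not be path-connected or have locally finite CW type.

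To avoid this, I would run the argument of Theorem~\ref{thm:dsecat} directly rather than citing it. Its proof only used, on the base, the section $s^X_{n!}$ and naturality to lift the $u_i$ into $\ker\SP^{n!}(q)^*$, followed by Theorem~\ref{cohomological lower bound on dsecat}. Naturality of $s$ (Proposition~\ref{prop:split}) requires both spaces to be of locally finite simplicial homotopy type.

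Alternatively, I would lift on the $Y\times Y$ side instead. Set $w_i=s^{Y\times Y}_{n!}(z_i)$. By Theorem~\ref{thm:kernel} applied to $\pi_Y$, or equivalently to $\Delta$ via $c$, one has $\SP^{n!}(\pi_Y)^*(w_i)=0$; here $Y$ and $Y\times Y$ are locally finite simplicial, so naturality applies. Naturality also gives $(\delta^X_{n!})^*\SP^{n!}(f,g)^*(w_i)=(f,g)^*(\delta^{Y\times Y}_{n!})^*(w_i)=(f,g)^*(z_i)=u_i$. Hence the product of these classes is nonzero.

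Proposition~\ref{cohomological lb2}, with $R=\Q$ and Alexander--Spanier cohomology agreeing with singular cohomology on these ANRs, then yields $\dD(f,g)\ge n$. This second route is the one I would commit to, since it avoids any hypothesis on $\mathcal{P}(f,g)$.
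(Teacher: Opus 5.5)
Your committed second route is exactly the paper's proof. You use $c$ to get $\ker\pi_Y^*=\ker\Delta^*$, lift each $z_i$ to $s^{Y\times Y}_{n!}(z_i)\in\ker\SP^{n!}(\pi_Y)^*$ via Theorem~\ref{thm:kernel}, use the strict naturality of the symmetric diagonal to identify the product with $(f,g)^*(z_1\smile\cdots\smile z_n)\neq 0$, and conclude by Proposition~\ref{cohomological lb2}. Your caution about the first route is also well founded, since $\mathcal{P}(f,g)$ need not be path-connected (e.g.\ $f=g=\id_{S^1}$ gives the free loop space), so Theorem~\ref{thm:dsecat} cannot be cited directly.
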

\begin{proof}
Note that we have a homotopy equivalence $c\colon Y\to PY$ defined by sending a point to the corresponding constant path with $\pi_Y\circ c=\Delta$. Therefore, we have $\ker \pi_Y^*=\ker \Delta^*$. Then observe that for each $z_i\in \ker \Delta^*$ for $1\leq i\leq n$ using Theorem~\ref{thm:kernel}, we obtain $\widetilde{z_i}\in \ker SP^{n!}(\pi_Y)^*$ such that $(\delta^{Y\times Y}_{n!})^*(\widetilde{z}_i)=z_i$. Then, using the naturality of the symmetric diagonal, we get
$$
\prod_{i=1}^n(\delta_{n!}^X)^*SP^{n!}(f,g)^*(\widetilde z_i)
= (f,g)^*(z_1\smile\cdots\smile z_n)\neq 0.
$$
Then the desired bound follows from Proposition~\ref{cohomological lb2}.    
\end{proof}

\begin{remark}
We note that the lower bound in Proposition~\ref{prop: cohomological lower bd on dD with rational coeff} can be thought of as a non-equivariant distributional version of the lower bound on the equivariant homotopic distance given by the authors in \cite[Theorem 3.8]{D-GC-eq}.
\end{remark}

\begin{remark}
We now compare the cohomological lower bound obtained by Jauhari and Oprea in \cite[Corollary 6.4]{O-J} for $m=2$ case with the lower bound in Proposition~\ref{prop: cohomological lower bd on dD with rational coeff}. In the case $m=2$, \cite[Corollary 6.4]{O-J}  states that
\[
\dD(f,g)\geq \cl_{\Q}(\mathrm{Im}(f^*-g^*)).
\]
Note that, since
\[
f^*(u)-g^*(u)
=(f,g)^*\bigl(\operatorname{pr}_1^*(u)-\operatorname{pr}_2^*(u)\bigr)
\]
for every $u\in H^*(Y;\mathbb{Q})$, and
$\operatorname{pr}_1^*(u)-\operatorname{pr}_2^*(u)\in\ker(\Delta^*)$,
we always have
\[
\operatorname{Im}(f^*-g^*)
\subseteq (f,g)^*\bigl(\ker(\Delta^*)\bigr).
\]
This implies $\cl_{\Q}\big((f,g)^*\bigl(\ker(\Delta^*)\bigr)\big)\geq \cl_{\Q}\big(\operatorname{Im}(f^*-g^*)\big)$. 
Since Proposition~\ref{prop: cohomological lower bd on dD with rational coeff} allows arbitray classes in $(f,g)^*\bigl(\ker(\Delta^*)\bigr)$, rather than restricting only classes lying in $\operatorname{Im}(f^*-g^*)$, it may provide a stronger lower bound.
\end{remark}

\begin{example}
\normalfont{
We borrow example \cite[Example 5.4]{macias2022homotopic} which has also appeared as \cite[Example 6.5]{O-J}.
Consider the Lie group $G=U(2)$ of $2 \times 2$ complex matrices $A$ such that $A^* = A^{-1}$, and which is topologically $S^1 \times S^3$. Since $G$ is a compact smooth manifold, it admits a finite triangulation and therefore satisfies the hypothesis of Proposition~\ref{prop: cohomological lower bd on dD with rational coeff}. 
Note that the rational cohomology ring of $G$ is an exterior algebra generated by a degree one generator $x_1$ and a degree three generator $x_3$ such that $x_1 \smile x_3 \neq 0$.
Let $\id_G \colon G \to G$ be the identity map and let $\mathbf{I} \colon G \to G$ be the inversion map, given by $\mathbf{I}(A) = A^* = A^{-1}$. 
Using Proposition~\ref{prop: cohomological lower bd on dD with rational coeff} we obtain $\dD(\id_G,\mathbf{I})\geq 2$. Since $\D(\id_G,\mathbf{I})=2 $, we have $\dD(\id_G,\mathbf{I})=2$.
}    
\end{example}

\bigskip

\section{Fibration inequality for distributional homotopic distance}\label{sec: fibration inequality}
In this section, we estimate the distributional homotopic distance between fibre-preserving maps of fibrations in terms of the induced distributional homotopic distance on the fibres and the distributional LS category of the base.

Let
\[
 \pi_Y\colon PY\longrightarrow Y\times Y,
 \qquad
 \pi_Y(\gamma)=(\gamma(0),\gamma(1)),
\]
be the free path fibration. For maps $f,g:X\to Y$, let
\[
 P(f,g)=\{(x,\gamma)\in X\times PY:
 \gamma(0)=f(x),\ \gamma(1)=g(x)\},
\]
and let
\[
 p_{f,g}\colon P(f,g)\longrightarrow X,
 \qquad
 p_{f,g}(x,\gamma)=x.
\]
The distributional homotopic distance is
$\dD(f,g)=\dsecat(p_{f,g}).$

Let $p\colon E\longrightarrow B,
 p'\colon E'\longrightarrow B'$ be Hurewicz fibrations between separable metric spaces, with $B$ path-connected. Let
$f,g\colon E\longrightarrow E'$
be fibre-preserving maps over maps $\bar f,\bar g\colon B\longrightarrow B'$,
that is,
\[
 p'\circ f=\bar f\circ p,
 \qquad
 p'\circ g=\bar g\circ p.
\]
In particular, we have the following commutative diagram:
\[\xymatrix{
E \ar[d]_{p} \ar[rr]^{f,g} && E' \ar[d]^{p'} \\
B \ar[rr]_{\bar f,\bar g} && B'.}\]
Now choose $b_0\in B$ such that
$\bar f(b_0)=\bar g(b_0)=b'_0.$
Let
\[
 F=p^{-1}(b_0),
 \qquad
 F'=(p')^{-1}(b'_0).
\]
Then $f$ and $g$ restrict to maps
$f_0,g_0:F\longrightarrow F',$

\begin{theorem}[Distributional fibration inequality]\label{thm: fibration inequality}
Suppose we have $f,g\colon  E\to E'$ as stated above. Then
\[
 \dD(f,g)+1
 \leq
 \bigl(\dD(f_0,g_0)+1\bigr)
 \bigl(\dcat(B)+1\bigr).
\]
\end{theorem}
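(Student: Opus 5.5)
The plan is to build the required $(mn)$-distributed homotopy directly, copying the barycentre construction from the proof of Theorem~\ref{prop: composition inequality}. Here $m=\dcat(B)+1$ and $n=\dD(f_0,g_0)+1$, and if either is infinite there is nothing to prove. The first family of measures comes from $\dcat(B)$, pulled back to $E$. The second comes from a distributed homotopy on the fibres, transported along lifted paths. The inequality $\dD(f,g)\le mn-1$ then follows from $\dD(f,g)=\dsecat(p_{f,g})$.

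\textbf{Step 1 (measures on paths in $B$).} Since $\dcat(B)=\dsecat(e_B)$ for the based path fibration, there is a continuous $\sigma_B\colon B\to \mathcal B_m(P_0B)$ with $\sigma_B(b)$ supported on paths joining $b$ and $b_0$. Reversing paths, as in part (2) of the basic properties of $\dD$, we may assume these paths start at $b$ and end at $b_0$. Let
\[
\Lambda=\{(x,\gamma)\in E\times PB:\gamma(0)=p(x),\ \gamma(1)=b_0\},
\]
which is the pullback of this fibration along $p$. By Proposition~\ref{pullbacks}, $\sigma_B$ induces a continuous $\sigma\colon E\to\mathcal B_m(\Lambda)$ with $\sigma(x)$ supported in the fibre over $x$. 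Concretely, $\sigma(x)$ is the pushforward of $\sigma_B(p(x))$ under $\gamma\mapsto (x,\gamma)$.

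\textbf{Step 2 (transport to the fibre and the local homotopy).} Fix a continuous lifting function $\Gamma$ for the Hurewicz fibration $p$. For $(x,\gamma)\in\Lambda$, the lift $\tilde\gamma=\Gamma(x,\gamma)$ is a path in $E$ from $x$ to a point $x_\gamma:=\tilde\gamma(1)\in F$. Let $s_0\colon F\to\mathcal B_n(PF')$ be an $n$-distributed homotopy from $f_0$ to $g_0$. Define $\tau\colon\Lambda\to\mathcal B_n(PE')$ by
\[
\tau(x,\gamma)=\sum_k \lambda_k\,\bigl(f\circ\tilde\gamma\bigr)\ast\phi_k\ast\overline{\bigl(g\circ\tilde\gamma\bigr)},
\qquad\text{where } s_0(x_\gamma)=\sum_k\lambda_k\phi_k .
\]
Each path in this sum runs from $f(x)$ to $f(x_\gamma)=f_0(x_\gamma)$, then to $g_0(x_\gamma)$, then back to $g(x)$. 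So $\tau(x,\gamma)\in\mathcal B_n(P(f(x),g(x)))$.

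Continuity of $\tau$ comes from writing it as a composite of continuous maps:
\[
\Lambda\xrightarrow{\ (\tilde\gamma,\ s_0(x_\gamma))\ } PE\times\mathcal B_n(PF')\longrightarrow \mathcal B_n(PE').
\]
The first map is continuous by continuity of $\Gamma$, evaluation and $s_0$. The second map pushes forward along a map that is jointly continuous in the fixed path and the varying middle path. Its continuity follows from a weak-convergence argument like the one in Lemma~\ref{lem:tensor-continuity}: concatenation is continuous in the compact--open, i.e.\ uniform, topology.

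\textbf{Step 3 (barycentre).} Set $s(x)=\bary(\tau_*\sigma(x))\in\mathcal B_{mn}(PE')$. Continuity follows from the continuity of $\bary$, of pushforward $\tau_*$ and of $\sigma$, exactly as in Theorem~\ref{prop: composition inequality}. Every path in the support of $s(x)$ runs from $f(x)$ to $g(x)$. The support has at most $mn$ elements, since $\sigma(x)$ has at most $m$ support points and each $\tau(x,\gamma)$ has at most $n$. Hence $s$ is an $mn$-distributed homotopy, and $\dD(f,g)\le mn-1$, which is the claimed inequality.

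\textbf{Expected main obstacle.} The hardest step is the continuity of $\tau$. The difficulty is that the weights and support of $s_0(x_\gamma)$ vary with $x_\gamma$, and this must be combined with path concatenation in the Lévy--Prokhorov (equivalently weak) topology. I would handle it by testing against $h\in C_b(PE')$: the integral $\int h\,d\tau(x,\gamma)$ equals $\int \hat h_{(x,\gamma)}\,d\,s_0(x_\gamma)$, where $\hat h_{(x,\gamma)}(\phi)=h(f\tilde\gamma\ast\phi\ast\overline{g\tilde\gamma})$. Convergence of these integrals then reduces to weak convergence of $s_0(x_{\gamma_l})$ together with local uniform convergence of $\hat h_{(x_l,\gamma_l)}$ near the finitely many support paths. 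This is the same disjoint-ball argument used in Lemma~\ref{lem:tensor-continuity}. Separability and metrizability of all the path spaces, assumed at the start of Section~\ref{sec: composition inequality}, justify using sequences and weak convergence throughout.
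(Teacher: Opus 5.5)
Your proposal is correct and is essentially the paper's proof with the composition inequality and the monotonicity steps unwound: your $\Lambda$ is the paper's $P_0^BE$, your $x_\gamma$ is $T(x,\beta)$, and your $\tau$ is the pushforward along the comparison map $\Xi$ of the section of $\theta$ pulled back from $s_0$, so $\bary(\tau_*\sigma(x))$ is exactly the section that the paper's argument produces. The only extra work is checking continuity of $\tau$ by hand, which your weak-convergence argument handles; more simply, you can write $\tau(x,\gamma)$ as the pushforward of $\delta_{\tilde\gamma}\otimes s_0(x_\gamma)$ under concatenation and invoke Lemma~\ref{lem:tensor-continuity}.
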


\begin{proof}
By definition,
$\dD(f,g)=\dsecat(p_{f,g}),$
where
$p_{f,g}\colon P(f,g)\longrightarrow E$
is the pullback of the free path fibration $\pi_{E'}\colon PE'\to E'\times E'$ along $(f,g)\colon E\to E'\times E'$. Explicitly,
\[
 P(f,g)=\{(x,\gamma)\in E\times PE':
 \gamma(0)=f(x),\ \gamma(1)=g(x)\}.
\]
We construct a tower of fibrations over $E$,
\[
 T^*P(f_0,g_0)
 \xrightarrow{\theta}
 P_0^B E
 \xrightarrow{\rho}
 E,
\]
and then compare its composite with $p_{f,g}$.

\medskip
\noindent\textbf{Step 1: the base storey.}
Let
\[
 P_0B=\{\beta\colon I\to B:\beta(1)=b_0\}
\]
and
\[
 p_0^B\colon P_0B\longrightarrow B,
 \qquad
 p_0^B(\beta)=\beta(0).
\]
This based path fibration computes
$\dcat(B)=\dsecat(p_0^B).$
Pull it back along $p\colon E\to B$. Thus
\[
 P_0^B E
 =
 p^*P_0B
 =
 \{(x,\beta)\in E\times P_0B:\beta(0)=p(x)\},
\]
and
\[
 \rho\colon P_0^B E\longrightarrow E,
 \qquad
 \rho(x,\beta)=x.
\]
Then $\rho$ is a Hurewicz fibration. By monotonicity of $\dsecat$ under pullback,
\[
 \dsecat(\rho)
 \leq
 \dsecat(p_0^B)
 =
 \dcat(B).
\]

\medskip
\noindent\textbf{Step 2: the fibre storey.}
Since $p\colon E\to B$ is a Hurewicz fibration, the homotopy lifting property provides a continuous lifting function
\[
 \Gamma\colon E\times_B PB\longrightarrow PE
\]
such that
$\Gamma(e,\beta)(0)=e$ and $p\circ \Gamma(e,\beta)=\beta,$
where
\[
 E\times_B PB=\{(e,\beta)\in E\times PB:p(e)=\beta(0)\}.
\]
Define
\[
 T\colon P_0^B E\longrightarrow F,
 \qquad
 T(x,\beta)=\Gamma(x,\beta)(1).
\]
This lands in $F=p^{-1}(b_0)$ because
\[
 p(T(x,\beta))
 =
 p(\Gamma(x,\beta)(1))
 =
 \beta(1)
 =
 b_0.
\]

Consider
$p_{f_0,g_0}\colon P(f_0,g_0)\longrightarrow F.$
It computes $\dD(f_0,g_0)=\dsecat(p_{f_0,g_0}).$
Pull this fibration back along $T$. We obtain
\[
 \theta\colon T^*P(f_0,g_0)\longrightarrow P_0^B E.
\]
A point of $T^*P(f_0,g_0)$ may be written as
$((x,\beta),\delta),$
where $(x,\beta)\in P_0^B E$ and $\delta$ is a path in $F'$ satisfying
\[
 \delta(0)=f_0(T(x,\beta)),
 \qquad
 \delta(1)=g_0(T(x,\beta)).
\]
Again by monotonicity under pullback,
\[
 \dsecat(\theta)
 \leq
 \dsecat(p_{f_0,g_0})
 =
 \dD(f_0,g_0).
\]

\medskip
\noindent\textbf{Step 3: comparison with $p_{f,g}$.}
We define a continuous map
\[
 \Xi\colon T^*P(f_0,g_0)\longrightarrow P(f,g)
\]
over $E$.

Let $((x,\beta),\delta)\in T^*P(f_0,g_0)$ and put
$\lambda=\Gamma(x,\beta)$ and $y=T(x,\beta)=\lambda(1)\in F.$
Thus
$\lambda(0)=x$ and
 $\lambda(1)=y.$
Define the path
\[
 \gamma_{x,\beta,\delta}
 =
 \bigl((f\circ\lambda)*\delta\bigr)
 *
 \overline{(g\circ\lambda)},
\]
where $*$ denotes path concatenation, and the bar denotes path reversal.

This concatenation is well-defined. Indeed,
\[
 f\circ\lambda:f(x)\leadsto f(y)=f_0(y),
\]
while $\delta:f_0(y)\leadsto g_0(y)$,
and, since $y\in F$,
\[
 \overline{(g\circ\lambda)}:g(y)=g_0(y)\leadsto g(x).
\]
Therefore $\gamma_{x,\beta,\delta}:f(x)\leadsto g(x)$,
so $(x,\gamma_{x,\beta,\delta})\in P(f,g)$.

Define
\[
 \Xi((x,\beta),\delta)
 =
 (x,\gamma_{x,\beta,\delta}).
\]
The map $\Xi$ is continuous because it is built from the continuous lifting function $\Gamma$, post-composition with the continuous maps $f$ and $g$, path reversal, and path concatenation. These operations are continuous for the compact--open topology on path spaces.

Moreover, the triangle over $E$ commutes strictly:
\[
 p_{f,g}\circ \Xi
 =
 \rho\circ\theta.
\]
Indeed,
\[
 p_{f,g}(\Xi((x,\beta),\delta))
 =
 p_{f,g}(x,\gamma_{x,\beta,\delta})
 =
 x,
\]
while
\[
 (\rho\circ\theta)((x,\beta),\delta)=\rho(x,\beta)=x.
\]

\medskip
\noindent\textbf{Step 4: conclusion.}
By monotonicity of $\dsecat$ under a commutative triangle of fibrations,
\[
 \dsecat(p_{f,g})
 \leq
 \dsecat(\rho\circ\theta).
\]
Applying the composition inequality gives
\[
 \dsecat(\rho\circ\theta)+1
 \leq
 \bigl(\dsecat(\rho)+1\bigr)
 \bigl(\dsecat(\theta)+1\bigr).
\]
Using the estimates from Steps 1 and 2, we obtain
\[
 \dsecat(\rho)+1\leq \dcat(B)+1
\]
and
\[
 \dsecat(\theta)+1\leq \dD(f_0,g_0)+1.
\]
Therefore
\[
 \dD(f,g)+1
 =
 \dsecat(p_{f,g})+1
 \leq
 \bigl(\dD(f_0,g_0)+1\bigr)
 \bigl(\dcat(B)+1\bigr),
\]
as required.
\end{proof}

\begin{remark}[Comments on the hypotheses]
The condition $\bar f(b_0)=\bar g(b_0)$ is essential for this proof, because it ensures that $f$ and $g$ restrict to maps from the same fibre $F=p^{-1}(b_0)$ to the same fibre $F'=(p')^{-1}(b'_0)$. Without this condition, one would need an additional comparison between different fibres of $p'$, and the statement would require a different formulation.
\end{remark}

\begin{remark}[The distributional mechanism]
The essential distributional ingredient is the composition inequality. In the classical case, analogous fibration estimates are usually obtained through covering arguments. Here the mechanism is different: one pushes forward a finitely supported probability measure and then takes its barycentre. This multiplies support sizes, which explains the product
\[
 \bigl(\dD(f_0,g_0)+1\bigr)
 \bigl(\dcat(B)+1\bigr)
\]
instead of an additive covering bound.
\end{remark}

We now turn to applications of the fibration inequality established in Theorem~\ref{thm: fibration inequality}.  
For a fibration $p\colon E\to B$ with fibre $F$, Varadarajan \cite{V} established the classical inequality
\begin{equation}\label{eq:Varadarajan-cat-ineq}
    \ct(E)+1\;\leq\;(\ct(F)+1)(\ct(B)+1),
\end{equation}
relating the Lusternik--Schnirelmann category of the total space, the fibre, and the base. As a direct application of Theorem~\ref{thm: fibration inequality}, we obtain the following distributional analogue.

\begin{proposition}
Let $p\colon E\to B$ be a fibration with path-connected base $B$ and fibre $F$. Then
\[
\dcat(E)+1 \;\leq\; (\dcat(F)+1)(\dcat(B)+1).
\]
\end{proposition}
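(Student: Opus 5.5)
We plan to apply Theorem~\ref{thm: fibration inequality} with $E'=E$, $B'=B$, $p'=p$, taking $f=\id_E$ and $g$ a suitable fibre-preserving map that is "constant" in the appropriate sense. The key is to express $\dcat$ of each space as a homotopic distance between the identity and a constant map, $\dcat(X)=\dD(\id_X,c_{x_0})$. For this we fix $b_0\in B$ and $e_0\in F=p^{-1}(b_0)$, and take $g=c_{e_0}\colon E\to E$. Then $g$ is fibre-preserving over $\bar g=c_{b_0}$, while $f=\id_E$ covers $\bar f=\id_B$, so $\bar f(b_0)=\bar g(b_0)=b_0$ and the hypothesis of Theorem~\ref{thm: fibration inequality} is met with $F'=F$. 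The restrictions are $f_0=\id_F$ and $g_0=c_{e_0}\colon F\to F$.

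Theorem~\ref{thm: fibration inequality} then gives
\[
\dD(\id_E,c_{e_0})+1\le \bigl(\dD(\id_F,c_{e_0})+1\bigr)\bigl(\dcat(B)+1\bigr).
\]
The left side is $\dcat(E)+1$ by the identification $\dcat(X)=\dD(\id_X,c_{x_0})$ recorded after the definition of $\dD$, which requires $E$ path-connected. It remains to identify $\dD(\id_F,c_{e_0})$ with $\dcat(F)$. If $F$ is path-connected, this is the same identification. If not, we instead use the general form $\dD(f,c)$: the quantity $\dD(\id_F,c_{e_0})$ is the distributional category of $F$ relative to $e_0$, and for the inequality we only need the left side to be bounded correctly, so connectedness of $F$ would have to be assumed implicitly, as is standard.

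The main obstacle is checking the standing hypotheses of Theorem~\ref{thm: fibration inequality}: separable metric spaces and Hurewicz fibrations, together with path-connectedness of $E$ and $F$ for the identification with $\dcat$. We would state these as the tacit standing assumptions of this section, following the convention at the start of Section~\ref{sec: composition inequality}. If needed, we would reduce to them via the homotopy invariance of $\dsecat$ (Proposition~\ref{primero}), since $\dcat(X)=\dsecat$ of the based path fibration. Beyond this, the proof is a direct specialization, and no new construction is required.
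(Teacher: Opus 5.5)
Your proposal is correct and matches the paper's proof: the paper specializes Theorem~\ref{thm: fibration inequality} to $E'=E$, $B'=B$, $f=\id_E$ and $g=c_{e_0}$ over $\id_B$ and $c_{b_0}$ with $b_0=p(e_0)$, then uses $\dD(\id_E,c_{e_0})=\dcat(E)$ and $\dD(\id_F,c_{e_0})=\dcat(F)$, exactly as you do. The paper also leaves the connectedness point tacit, and it is harmless: if $F$ is path-connected then so is $E$ (lift a path in $B$ ending at $b_0$), and if $F$ is not path-connected then $\dD(\id_F,c_{e_0})=\dcat(F)=\infty$ under the based-path-fibration definition, so the inequality holds trivially.
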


\begin{proof}
Consider the commutative diagram corresponding to Theorem~\ref{thm: fibration inequality} with $E'=E$, $B'=B$, $f=id_E$, $g=c_{e_0}$, $\bar f=id_B$, and $\bar g=c_{b_0}$, where $e_0\in E$ and $b_0=p(e_0)$. Applying Theorem~\ref{thm: fibration inequality}, we obtain
\[
\dD(id_E,c_{e_0})+1
\leq
\bigl(\dD(id_F,c_{e_0})+1\bigr)(\dcat(B)+1).
\]
Since
\[
\dD(id_E,c_{e_0})=\dcat(E)
\quad\text{and}\quad
\dD(id_F,c_{e_0})=\dcat(F),
\]
the desired inequality follows.
\end{proof}

Turning to topological complexity, Farber and Grant \cite[Lemma~7]{F-G} proved the inequality
\begin{equation}\label{eq:Farber-Grant-inequality}
    \TC(E)+1 \;\leq\; (\TC(F)+1)(\ct(B\times B)+1),
\end{equation}
relating the topological complexity of the total space and the fibre with the category of $B\times B$. We now establish its distributional analogue.

\begin{proposition}
Let $p\colon E\to B$ be a fibration with path-connected base $B$ and fibre $F$. Then
\[
\dTC(E)+1
\;\leq\;
(\dTC(F)+1)(\dcat(B\times B)+1).
\]
\end{proposition}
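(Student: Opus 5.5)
We apply Theorem~\ref{thm: fibration inequality} to the product fibration $p\times p\colon E\times E\to B\times B$, exactly as the previous proposition applied it to $p$ itself. Take $E'=E$, $B'=B$, and $f=\mathrm{pr}_1$, $g=\mathrm{pr}_2\colon E\times E\to E$. These are fibre-preserving over $\bar f=\mathrm{pr}_1$, $\bar g=\mathrm{pr}_2\colon B\times B\to B$, because $p\circ\mathrm{pr}_i=\mathrm{pr}_i\circ(p\times p)$.

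The base $B\times B$ is path-connected since $B$ is. The basepoint condition $\bar f(b)=\bar g(b)$ requires choosing $(b_0,b_0)\in B\times B$; then $\bar f(b_0,b_0)=\bar g(b_0,b_0)=b_0$. The fibre of $p\times p$ over $(b_0,b_0)$ is $F\times F$, and the fibre of $p$ over $b_0$ is $F$. The restrictions $f_0,g_0\colon F\times F\to F$ are the two projections of $F\times F$.

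Theorem~\ref{thm: fibration inequality} then gives
\[
\dD(\mathrm{pr}_1,\mathrm{pr}_2)+1\le\bigl(\dD(\mathrm{pr}_1|_{F\times F},\mathrm{pr}_2|_{F\times F})+1\bigr)\bigl(\dcat(B\times B)+1\bigr).
\]
By item (3) of the list following the definition of $\dD$, we have $\dTC(X)=\dD(\mathrm{pr}_1,\mathrm{pr}_2)$ for any space $X$. Applied once to $E$ and once to $F$, this identifies the left-hand side with $\dTC(E)+1$ and the first factor on the right with $\dTC(F)+1$, which is the claim.

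The only points to check are the standing hypotheses of Theorem~\ref{thm: fibration inequality}.
\begin{itemize}
\item \emph{Fibration.} $p\times p$ is a Hurewicz fibration, since products of Hurewicz fibrations are Hurewicz fibrations.
\item \emph{Separable metric spaces.} Like the preceding proposition, we assume, implicitly, that $E$ and $B$ are separable metric spaces, as in Section~\ref{sec: fibration inequality}. Then $E\times E$ and $B\times B$ are separable metric spaces as well.
\item \emph{Identification $\dTC=\dD(\mathrm{pr}_1,\mathrm{pr}_2)$.} This holds for $E$ and $F$ directly from the definitions, because a distributed homotopy between the projections is exactly a distributional motion planner.
\end{itemize}
I expect no real obstacle. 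The most delicate point is bookkeeping: the theorem's fibre $F$ becomes $F\times F$ here, and the target fibre $F'$ is $F$.
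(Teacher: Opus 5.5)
Your proposal is correct and follows the paper's proof: the paper also applies Theorem~\ref{thm: fibration inequality} to the square with $p\times p\colon E\times E\to B\times B$ over $p$, taking $f,g$ to be the two projections, and then identifies $\dD(\mathrm{pr}_1,\mathrm{pr}_2)$ with $\dTC$ for both $E$ and $F$. Your explicit checks of the basepoint $(b_0,b_0)$, the fibre $F\times F$, and the fibration and separability hypotheses only spell out what the paper leaves implicit.
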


\begin{proof}
Consider the commutative diagram
\[
\xymatrix{
E\times E \ar[d]_{p \times p} \ar[rr]^{pr_1^E,\,pr_2^E} && E \ar[d]^{p} \\
B\times B \ar[rr]_{pr_1^B,\,pr_2^B} && B.
}
\]
Applying Theorem~\ref{thm: fibration inequality} yields
\[
\dD(pr_1^E,pr_2^E)+1
\leq
\bigl(\dD(pr_1^F,pr_2^F)+1\bigr)
(\dcat(B\times B)+1).
\]
Since
\[
\dD(pr_1^E,pr_2^E)=\dTC(E)
\quad\text{and}\quad
\dD(pr_1^F,pr_2^F)=\dTC(F),
\]
the result follows.
\end{proof}

\begin{example}
\normalfont{
Let $S^m \longrightarrow E \xrightarrow{p} B$
be a sphere bundle. Since
$\dcat(S^m)=1$,
the distributional version of Varadarajan's inequality gives
\[
\dcat(E)+1
\leq
(\dcat(S^m)+1)(\dcat(B)+1)
=
2(\dcat(B)+1).
\]
Therefore,
\[
\dcat(E)\leq 2\,\dcat(B)+1.
\]
Thus, every sphere bundle admits a uniform upper bound on its distributional
LS-category in terms of the distributional LS-category of the base. In particular, if we have $B=S^n, \mathbb{R} P^n$, then $\dcat(E)\leq 3$.

The distributional Farber--Grant inequality gives
\[
\dTC(E)+1
\leq
(\dTC(S^m)+1)(\dcat(B\times B)+1).
\]
We know from \cite[Proposition 6.8]{D-J} that
$
\dTC(S^{2n+1})=1,
\
\dTC(S^{2n})= 2.
$
Therefore, for an odd-dimensional sphere bundle
$S^{2n+1}\longrightarrow E\xrightarrow{p}B$,
we have
\[
\dTC(E)
\leq
2(\dcat(B\times B))+1.
\]
For an even-dimensional sphere bundle
$S^{2n}\longrightarrow E\xrightarrow{p}B$,
\[
\dTC(E)
\leq
3(\dcat(B\times B))+2.
\]
Again in particular, if $B=S^k$, then using $\dcat(S^k\times S^k)\leq \ct(S^k\times S^k)=2$, for $S^{2n+1}\longrightarrow E\xrightarrow{p}B$, we have \[
\dTC(E)
\leq
2(2)+1=5
\]
and for $S^{2n}\longrightarrow E\xrightarrow{p}B$ we have
\[
\dTC(E)
\leq
3(2)+2=8.
\]}
\end{example}

\begin{example}
\normalfont{
Let
$\mathbb{RP}^{k-1}\longrightarrow P(\xi)\xrightarrow{p} B$
be the projectivization of a rank $k$ real vector bundle $\xi$ over $B$ for $k\geq 2$.
From \cite[Proposition 6.2]{D-J}, we know that $\dcat(\mathbb{RP}^{k-1})=1$. Then
the distributional version of Varadarajan's inequality implies
\[
\dcat(P(\xi))+1
\leq
(\dcat(\mathbb{RP}^{k-1})+1)(\dcat(B)+1)
=
2(\dcat(B)+1).
\]
Hence,
\[
\dcat(P(\xi))
\leq
2\,\dcat(B)+1.
\]
This provides a uniform upper bound for the distributional LS-category of
projectivizations of real vector bundles.

The distributional Farber--Grant inequality gives
\[
\dTC(P(\xi))+1
\leq
(\dTC(\mathbb{RP}^{k-1})+1)(\dcat(B\times B)+1).
\]
Using $\dTC(\mathbb{RP}^{k-1})=1$ from \cite[Example 3.13]{D-J},
we obtain
\[
\dTC(P(\xi))
\leq
2\,\dcat(B\times B)+1.
\]
This gives a general upper bound for the distributional topological
complexity of projectivizations of real vector bundles. Again, if $B=S^k$, then we have 
\[
\dTC(P(\xi))
\leq
5.
\]}
\end{example}

\section{Multiplicative triangle inequality for distributional homotopic distance}\label{sec: multiplicative triangle inequality}
In this section, we establish the multiplicative triangle inequality for the distributional homotopic distance. 

\begin{theorem}[Multiplicative triangle inequality for $\dD$]\label{cor:triangle}
Let $f, g, h \colon  X \to Y$ be maps, where $X$ and $Y$ are separable metric spaces. Then
\[
\dD(f, g) + 1 \;\le\; \bigl(\dD(f, h) + 1\bigr)\cdot\bigl(\dD(h, g) + 1\bigr).
\]
\end{theorem}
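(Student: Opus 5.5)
The plan is to build the required distributed homotopy directly, concatenating paths and multiplying supports through the tensor product of Lemma~\ref{lem:tensor-continuity}. A reduction to the product inequality of Theorem~\ref{prop:product-inequality} via a pullback of $p_{f,h}\times p_{h,g}$ along the diagonal is possible, but the direct route is cleaner.

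If either invariant on the right is infinite there is nothing to prove. Otherwise put $m=\dD(f,h)+1$ and $n=\dD(h,g)+1$. By definition there are continuous maps $s\colon X\to\mathcal{B}_m(PY)$ and $s'\colon X\to\mathcal{B}_n(PY)$ with $s(x)\in\mathcal{B}_m(P(f(x),h(x)))$ and $s'(x)\in\mathcal{B}_n(P(h(x),g(x)))$. I would first form
\[
X\xrightarrow{(s,s')}\mathcal{B}_m(PY)\times\mathcal{B}_n(PY)\xrightarrow{\ \otimes\ }\mathcal{B}_{mn}(PY\times PY).
\]
This composite is continuous by Lemma~\ref{lem:tensor-continuity}, since $PY$ is a separable metric space when $Y$ is: compact--open equals the uniform topology, and $C(I,Y)$ is separable for separable metric $Y$. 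By Lemma~\ref{lem:product-support}, the support of $s(x)\otimes s'(x)$ is contained in $P(f(x),h(x))\times P(h(x),g(x))$, which lies in the subspace $PY\times_Y PY=\{(\alpha,\beta):\alpha(1)=\beta(0)\}$.

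Next I would push forward along concatenation $c\colon PY\times_Y PY\to PY$, $c(\alpha,\beta)=\alpha*\beta$, which is continuous. Set $H(x)=c_*\bigl(s(x)\otimes s'(x)\bigr)$. A push-forward along a continuous map preserves weak convergence and does not increase support size, so $H\colon X\to\mathcal{B}_{mn}(PY)$ is continuous. Every support point of $H(x)$ is a path from $f(x)$ to $g(x)$, so $H(x)\in\mathcal{B}_{mn}(P(f(x),g(x)))$. Hence $\dD(f,g)\le mn-1$, which is the stated inequality.

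The main obstacle is a technical point about continuity. The tensor map lands in $\mathcal{B}_{mn}(PY\times PY)$, while $c$ is only defined on the closed subspace $PY\times_Y PY$. I would treat $H(x)$ as an element of $\mathcal{B}_{mn}(PY\times_Y PY)$ and use the fact that weak convergence of measures supported on a closed subset $C$ is the same whether tested against $C_b(C)$ or $C_b(PY\times PY)$. For this, note that a bounded continuous function on $C$ extends by Tietze to the metric space. The fallback is to reuse the argument of Lemma~\ref{lem:tensor-continuity} directly for $c_*\circ\otimes$, with small balls around the finitely many atoms.
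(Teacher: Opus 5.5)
Your proof is correct. The section you write down, $H(x)=c_*\bigl(s(x)\otimes s'(x)\bigr)$, is exactly the section the paper's argument produces once its formal steps are unwound, so the two proofs differ in packaging rather than in mechanism.

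The paper never constructs this section explicitly. It proceeds in three steps:
\begin{enumerate}
\item It forms the pullback $Q$ of $Y^I\times_Y Y^I$ along $(f,h,h,g)$ and identifies $q\colon Q\to X$ with the pullback of $\bar\pi^{f,h}_Y\times\bar\pi^{h,g}_Y$ along $\Delta_X$. The pullback property of $\dsecat$ then gives $\dsecat(q)\le\dsecat(\bar\pi^{f,h}_Y\times\bar\pi^{h,g}_Y)$.
\item It uses concatenation to get a map $Q\to\mathcal{P}(f,g)$ over $X$. Monotonicity then gives $\dsecat(\bar\pi^{f,g}_Y)\le\dsecat(q)$.
\item It invokes the product inequality (Theorem~\ref{prop:product-inequality}).
\end{enumerate}
This is precisely the reduction you mention and set aside.

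What the paper's route buys is structural. It shows the triangle inequality is a formal consequence of the product theorem together with the pullback and monotonicity properties of $\dsecat$, mirroring the classical argument for homotopic distance.

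What your route buys is the following:
\begin{itemize}
\item It is self-contained and avoids identifying the pullback squares.
\item It deals explicitly with the point-set issue that measures supported in the closed subspace $PY\times_Y PY$ carry the right topology. In the paper this is absorbed into the pullback behaviour of $E_k(-)$ (Proposition~\ref{pullbacks}, stated there without proof).
\item It uses slightly weaker hypotheses. You only need $PY$, hence $Y$, to be separable. The paper also needs $X$ separable, so that the product theorem applies to $\mathcal{P}(f,h),\mathcal{P}(h,g)\subseteq X\times Y^I$.
\end{itemize}

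Your Tietze argument for the subspace step is fine. A slightly quicker alternative: the L\'evy--Prokhorov metric on measures supported in a closed subspace $C$ coincides with the L\'evy--Prokhorov metric of $C$ itself, so $\mathcal{B}_{mn}(C)$ sits isometrically inside $\mathcal{B}_{mn}(PY\times PY)$.
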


\begin{proof}
Consider the free path fibration $\pi_Y \colon Y^I \to Y \times Y$, and form the three pullbacks
\[
\mathcal{P}(f, g) \xrightarrow{\ \bar\pi^{f,g}_Y\ } X, \qquad
\mathcal{P}(f, h) \xrightarrow{\ \bar\pi^{f,h}_Y\ } X, \qquad
\mathcal{P}(h, g) \xrightarrow{\ \bar\pi^{h,g}_Y\ } X,
\]
obtained by pulling back $\pi_Y$ along $(f, g)$, $(f, h)$ and $(h, g)$ respectively, so that
\[
\dD(f, g) = \dsecat(\bar\pi^{f,g}_Y), \qquad
\dD(f, h) = \dsecat(\bar\pi^{f,h}_Y), \qquad
\dD(h, g) = \dsecat(\bar\pi^{h,g}_Y).
\]

Set $Y^I \times_Y Y^I := \{(\alpha, \beta) \in Y^I \times Y^I \mid \alpha(1) = \beta(0)\}$, and let
$(Y \times Y) \times_Y (Y \times Y) \subset Y^{\times 4}$ be the subspace defined by $y_2 = y_3$.
The square
\[
\xymatrix{
Y^I \times_Y Y^I \ar[r] \ar[d]_{\pi_Y \times_Y \pi_Y} & Y^I \times Y^I \ar[d]^{\pi_Y \times \pi_Y} \\
(Y \times Y) \times_Y (Y \times Y) \ar[r] & Y \times Y \times Y \times Y
}
\]
is a pullback, since $Y^I \times_Y Y^I$ is exactly the preimage of $(Y \times Y) \times_Y (Y \times Y)$
under $\pi_Y \times \pi_Y$.

Define $Q := X \times_{Y^{\times 4}} \bigl(Y^I \times Y^I\bigr)$ along
$(f, h, h, g) : X \to Y^{\times 4}$. Since a composite of pullback squares is again a pullback, $Q$
is equally the pullback of $Y^I \times_Y Y^I$ along $(f, h, h, g)$ via the square above. By the
universal property of pullbacks applied to
\[
\xymatrix{
Q \ar[r] \ar[d]_{q} & \mathcal{P}(f, h) \times \mathcal{P}(h, g) \ar[d]^{\bar\pi^{f,h}_Y \times \bar\pi^{h,g}_Y} \\
X \ar[r]_{\Delta_X} & X \times X
}
\]
there is an induced map $Q \to \mathcal{P}(f, h) \times \mathcal{P}(h, g)$ making this square commute, and since
$\bigl((f, h) \times (h, g)\bigr) \circ \Delta_X = (f, h, h, g)$ the composite of this square with
the previous one is again a pullback, so the square above is itself a pullback. By monotonicity of
$\dsecat$ under commuting diagrams of this type (the direct analogue of the classical monotonicity
of sectional category under homotopy-commuting triangles, which is formal from the definition of a
local distributional section),
\[
\dsecat(q) \le \dsecat\bigl(\bar\pi^{f,h}_Y \times \bar\pi^{h,g}_Y\bigr).
\]

To relate $q$ back to $\bar\pi^{f,g}_Y$, let $m \colon  Y^I \times_Y Y^I \to Y^I$ be ordinary
concatenation of paths, $m(\alpha, \beta) := \alpha * \beta$, and let
$t \colon (Y \times Y) \times_Y (Y \times Y) \to Y \times Y$ be $t(y_1, y, y, y_2) := (y_1, y_2)$. On the
one hand, $t \circ (f, h, h, g) = (f, g) : X \to Y \times Y$, since both sides send $x \in X$ to
$(f(x), g(x))$. On the other hand, the square
\[
\xymatrix{
Y^I \times_Y Y^I \ar[r]^{m} \ar[d]_{\pi_Y \times_Y \pi_Y} & Y^I \ar[d]^{\pi_Y} \\
(Y \times Y) \times_Y (Y \times Y) \ar[r]_(.65){t} & Y \times Y
}
\]
commutes, i.e.\ $\pi_Y \circ m = t \circ (\pi_Y \times_Y \pi_Y)$, since for $(\alpha, \beta)$ with
$\alpha(1) = \beta(0)$ both composites send $(\alpha, \beta)$ to $(\alpha(0), \beta(1))$. Pulling
back this square along $X \xrightarrow{(f, h, h, g)} (Y \times Y) \times_Y (Y \times Y)$, and using
the first identity to recognise the resulting base change as $(f, g)$, the universal property of
pullbacks yields an induced map $Q \to \mathcal{P}(f, g)$ compatible with the projections to $X$, i.e.\ a
commuting triangle $Q \to \mathcal{P}(f, g) \xrightarrow{\bar\pi^{f,g}_Y} X$ over $q \colon Q \to X$. 
Monotonicity of $\dsecat$ then gives
\[
\dsecat(\bar\pi^{f,g}_Y) \le \dsecat(q).
\]

Combining the two displayed inequalities with Theorem~\ref{prop:product-inequality} applied to
$p_1 = \bar\pi^{f,h}_Y$ and $p_2 = \bar\pi^{h,g}_Y$ --- whose hypothesis is met since $\mathcal{P}(f, h)$ and
$\mathcal{P}(h, g)$, as subspaces of $X \times Y^I$, are separable metric spaces whenever $X$ and $Y$ are ---
we conclude
\begin{align*}
\dD(f, g) + 1 = \dsecat(\bar\pi^{f,g}_Y) + 1
&\le \dsecat(q) + 1
\le \dsecat\bigl(\bar\pi^{f,h}_Y \times \bar\pi^{h,g}_Y\bigr) + 1 \\
&\le \bigl(\dsecat(\bar\pi^{f,h}_Y) + 1\bigr)\bigl(\dsecat(\bar\pi^{h,g}_Y) + 1\bigr)\\
&= \bigl(\dD(f, h) + 1\bigr)\bigl(\dD(h, g) + 1\bigr). 
\end{align*}
\end{proof}

The following result coincides with the case $m=2$ of \cite[Proposition 3.3]{O-J} for the sequential distributional homotopic distance. Nevertheless, we include it here as a consequence of our multiplicative triangle inequality.

\begin{corollary}[Bound via individual $\dD$-category]
Suppose $\dcat(f) := \dD(f, c)$ is defined for a fixed constant map $c$. Taking $h = c$ in
Theorem~\ref{cor:triangle} gives
\[
\dD(f, g) + 1 \;\le\; \bigl(\dcat(f) + 1\bigr)\cdot\bigl(\dcat(g) + 1\bigr).
\]
\end{corollary}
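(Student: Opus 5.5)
The corollary is obtained by specialising Theorem~\ref{cor:triangle} to $h=c$, where $c\colon X\to Y$ is the fixed constant map. The triangle inequality then reads
\[
\dD(f,g)+1\le\bigl(\dD(f,c)+1\bigr)\bigl(\dD(c,g)+1\bigr).
\]
The only remaining work is to identify the two factors on the right with $\dcat(f)+1$ and $\dcat(g)+1$.

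For the first factor, $\dD(f,c)=\dcat(f)$ holds by the definition adopted in the statement. This agrees with the earlier observation (2) that $\dcat(f)=\dD(f,c_{y_0})$.

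For the second factor, I will invoke the symmetry property (2) of the basic proposition on $\dD$, namely $\dD(c,g)=\dD(g,c)$. Its proof reverses paths via the map $\psi_*$ on $\mathcal{B}_{n+1}(PY)$. This gives $\dD(c,g)=\dcat(g)$. Substituting both identifications yields the claimed inequality.

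Two bookkeeping points remain. First, Theorem~\ref{cor:triangle} requires $X$ and $Y$ to be separable metric spaces, and I carry this standing assumption over to the corollary. Second, the same constant $c$ must be used for both $f$ and $g$. If one wants the statement independent of the choice of constant, one also needs that $\dD(f,c_{y_0})$ does not depend on $y_0$. When $Y$ is path-connected this follows from property (3): any two constant maps into $Y$ are homotopic, so replacing $c$ by another constant map does not change $\dD$.

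I do not expect a genuine obstacle, since this is a direct specialisation of the triangle inequality. The only point needing care is the direction convention in $\dD(c,g)$ versus $\dD(g,c)$, which the symmetry property settles.
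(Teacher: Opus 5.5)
Your proposal is correct and matches the paper's argument: the paper likewise just specialises Theorem~\ref{cor:triangle} to $h=c$, leaving implicit the identification $\dD(c,g)=\dD(g,c)=\dcat(g)$ via the symmetry property that you make explicit. Your added remarks are sound, namely carrying over the separability hypothesis and showing independence of the choice of constant via homotopy invariance when $Y$ is path-connected, though the paper does not spell them out.
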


\section{An alternative proof of the Jauhari–Oprea formula in the two-map H-group case}\label{sec: H-group case}
In this section, we explore the relationship between $H$-spaces and distributional homotopic distance. To this end, we first establish some preliminary results that will be needed in our subsequent arguments.

We note that the results presented in this section have previously appeared in greater generality in \cite{O-J}, with proofs using techniques different from those employed here. We reproduce the relevant results in our setting using alternative methods.

Recall that a space \(B\) is called a \emph{Dold space} if it is numerably categorical; that is, if \(B\) admits a numerable open cover \(\mathcal{U} = \{V_j\}_{j \in J}\) such that each inclusion \(V_j \hookrightarrow B\) is null-homotopic for every \(j \in J\). This class of spaces is quite broad. For instance, CW-complexes, locally contractible paracompact spaces (such as topological manifolds), and classifying spaces \(BG\) all belong to it. A thorough study of this class can be found in \cite{schwam-vogt}, where, among other results, it is shown that the class of Dold spaces is closed under homotopy equivalences \cite[Corollary~3.3]{schwam-vogt}.
By a \emph{Dold fibration} (or \emph{weak fibration}) we mean a map $p \colon E \to B$ which is fibrewise homotopy equivalent (over $B$) to a Hurewicz fibration. Equivalently, $p$ is a weak fibration if it satisfies the weak covering homotopy property (WCHP) for every space $X$, as in \cite[Definition 5.1]{dold1963partitions}.

A fibrewise space $X$ over $B$ is said to be \emph{weakly fibrant} if the projection map $p_X \colon X \to B$ is a Dold fibration.

With these definitions in place, we now turn to the following result of Dold, which will be important for our purposes.

\begin{theorem}\cite[Theorem 6.3]{dold1963partitions}\label{dold}
Let $f \colon X \to Y$ be a fibrewise map over a Dold space $B$, where both $X$ and $Y$ are weakly fibrant. The following conditions are equivalent:
\begin{enumerate}
\item $f$ is a fibrewise homotopy equivalence;
\item for each $b \in B$, the restriction of $f$ to the fibres
\[
f_b \colon p_X^{-1}(b) \longrightarrow p_Y^{-1}(b)
\]
is a homotopy equivalence.
\end{enumerate}
\end{theorem}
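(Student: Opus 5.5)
The implication $(1)\Rightarrow(2)$ is formal, and I would dispose of it first. If $g\colon Y\to X$ is a fibrewise homotopy inverse of $f$, then the fibrewise homotopies $g f\simeq_B \id_X$ and $f g\simeq_B\id_Y$ preserve fibres at every time. Restricting them to $p_X^{-1}(b)$ and $p_Y^{-1}(b)$ therefore shows that $g_b$ is a homotopy inverse of $f_b$.

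For $(2)\Rightarrow(1)$ I would first reduce to Hurewicz fibrations. By the definition of weak fibrancy, choose fibrewise homotopy equivalences $X\simeq_B X'$ and $Y\simeq_B Y'$ with $p_{X'},p_{Y'}$ Hurewicz fibrations. Then $f$ corresponds to $f'=\phi\circ f\circ\psi$, where $\phi\colon Y\to Y'$ and $\psi\colon X'\to X$ are the chosen equivalences. Fibrewise homotopy equivalences restrict to homotopy equivalences on fibres, by the first paragraph. Hence $f'$ still satisfies (2), and it suffices to treat $f'$.

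Next I would reduce to the existence of one-sided inverses. \emph{Claim:} every fibrewise map satisfying (2) between fibrations over $B$ admits a fibrewise homotopy left inverse. Granting the claim, let $g$ be a left inverse of $f$. Then $g_b f_b\simeq \id$ for each $b$, so each $g_b$ is a homotopy equivalence. Applying the claim to $g$ gives $h$ with $hg\simeq_B\id$. Standard algebra then gives $h\simeq_B h(gf)=(hg)f\simeq_B f$, so $fg\simeq_B\id$ and $f$ is a fibrewise homotopy equivalence.

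To prove the claim I would work locally and then glue. Take a numerable cover $\{V_j\}$ of $B$ by sets whose inclusions are null-homotopic. Over each $V_j$, the restricted fibrations are pullbacks along a map homotopic to a constant. By homotopy invariance of pullbacks of Hurewicz fibrations, they are fibre homotopy equivalent to the trivial fibrations $V_j\times F\to V_j$ and $V_j\times F'\to V_j$. Under these identifications, $f|_{V_j}$ becomes a map $V_j\times F\to V_j\times F'$ over $V_j$ whose fibre maps are homotopy equivalences. I would construct an inverse over $V_j$ by composing with a homotopy inverse of a single fibre map, transported along the contraction. This gives a local fibrewise left inverse $g_j$ with homotopy $g_jf\simeq\id$ over $V_j$.

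The main obstacle is the gluing step: assembling the local inverses and local homotopies into a global one. Naive patching fails because the homotopies do not agree on overlaps. Here I would follow Dold's partition-of-unity technique. First pass to a countable numerable cover, indexed by finite subsets of $J$ via the standard refinement, with a subordinate partition of unity. Then extend a left inverse inductively over $V_1\cup\dots\cup V_n$. At each step, use the weak covering homotopy property, in the form of sections extended over a ``halo'' (the region where the partition function is positive), to deform the previous inverse so that it agrees with the new local one near the boundary. Finally, the infinite concatenation of homotopies is reparametrized using the partition of unity, so that over any point only finitely many stages act, and the result is continuous.
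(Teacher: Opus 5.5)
The paper gives no proof of this statement; it is quoted from Dold. So your proposal has to stand on its own as a reconstruction of Dold's argument. The formal parts are sound: $(1)\Rightarrow(2)$, the passage to Hurewicz fibrations, and the two-sided trick from one-sided inverses. The local step over a set $V_j$ contractible in $B$ is also fine, provided ``transported along the contraction'' means the following: under the trivializations, $f|_{V_j}$ is fibrewise homotopic to $\mathrm{id}_{V_j}\times f_{b_0}$. This holds because $f$ is defined over all of $B$, so transport along the null-homotopy commutes with $f$ up to fibrewise homotopy. Knowing only that the fibre maps of $V_j\times F\to V_j\times F'$ are homotopy equivalences would just be the theorem again over the arbitrary base $V_j$.

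The genuine gap is the gluing step, which is where all the content of the theorem lies, and the mechanism you sketch does not work as stated.

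\emph{Why it fails.} A local left inverse is a pair $(g_j,H_j)$ with $H_j\colon g_jf\simeq\mathrm{id}_X$ over $V_j$, and both components must be glued. Suppose that on an overlap you deform the previous inverse $g$ into $g_n$ by a fibrewise homotopy $K$. You are then left with two homotopies from $g_nf$ to $\mathrm{id}_X$ there: $H_n$, and $Kf$ reversed followed by $H$. These need not be homotopic rel end points. Without such a second-order homotopy, cut-off interpolation produces a map $g'$ but no homotopy $g'f\simeq_B\mathrm{id}_X$, and your sketch says nothing about this. Moreover, $g$ is indexed by $Y$ while $H$ is indexed by $X$. So left-inverse data are not sections of a single map over $Y$, and the halo/section-extension machinery does not apply to them directly. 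Nor is it the WCHP of $p_X,p_Y$ that supplies section extension.

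\emph{The missing idea} is to encode inverse data as sections of an auxiliary map, and to work with \emph{right} inverses.
\begin{enumerate}
\item A pair $(g,H)$ with $H\colon fg\simeq_B\mathrm{id}_Y$ is exactly a section of the end-point map $q\colon P_f\to Y$, $q(x,\omega)=\omega(1)$. Here $P_f$ consists of pairs $(x,\omega)$ with $\omega$ a vertical path in $Y$ starting at $f(x)$.
\item Over $p_Y^{-1}(V_j)$, $f$ is a fibre homotopy equivalence by your local step. There $q$ is a fibration and a homotopy equivalence, hence shrinkable, since a homotopy equivalence between fibrations over a common base is a fibre homotopy equivalence (no hypothesis on the base is needed). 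A shrinkable map has the section extension property over halos.
\item Dold's partition-of-unity theorem says this property over each member of a numerable cover implies it globally. That is where your countable refinement and inductive halo extension actually belong. The resulting global section is a global right inverse.
\end{enumerate}
Your two-sided trick works verbatim for right inverses: $fg\simeq_B\mathrm{id}_Y$ and $gh\simeq_B\mathrm{id}_X$ give $f\simeq_B fgh\simeq_B h$, hence $gf\simeq_B gh\simeq_B\mathrm{id}_X$.
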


\begin{remark}\label{virem}
As shown by Dold, the result remains valid if condition (2) is weakened by requiring that $f_b$ be a homotopy equivalence for at least one point $b$ in each path component of $B$. In particular, if $B$ is path-connected, it suffices to assume that $f_b$ is a homotopy equivalence for a single point $b \in B$.
\end{remark}

The following result is a consequence of Dold's theorem. It is likely well known, although we have not found a precise reference. We include a proof for completeness.

\begin{lemma}\label{clave}
Consider a homotopy commutative square
$$
\xymatrix{
{E'} \ar[r] \ar[d] & {E} \ar[d] \\
{B'} \ar[r] & {B},  }
$$
where $B'$ is a Dold space. Then the square is a homotopy pullback if and only if, for every $b'\in B'$, the induced map between the corresponding homotopy fibers of the vertical maps
$$\mathrm{hofib}_{b'}(E' \to B') \longrightarrow \mathrm{hofib}_{f(b')}(E \to B)$$
is a homotopy equivalence.
\end{lemma}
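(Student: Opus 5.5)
Replace both vertical maps by fibrations and compare the strict pullback with $E'$ fibrewise over $B'$; Dold's Theorem~\ref{dold} (in the form of Remark~\ref{virem}) then turns fibrewise equivalences into fibre-by-fibre statements.

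First I will reduce to a strictly commutative square of fibrations. Write the square as $g'\colon E'\to E$, $p'\colon E'\to B'$, $p\colon E\to B$, $f\colon B'\to B$. Factor $p$ as a homotopy equivalence followed by a Hurewicz fibration $\bar p\colon \bar E\to B$, using the mapping path space. Similarly replace $p'$ by a fibration $\bar p'\colon \bar E'\to B'$. The homotopy $H\colon p\,g'\simeq f\,p'$ can be absorbed into a strictly commuting map $\bar g\colon \bar E'\to \bar E$ over $f$. Concretely, an element $(e',\omega)$ with $\omega$ a path in $B'$ from $p'(e')$ is sent to $(g'(e'),\, H(e',-)*f\omega)$ in $\bar E$. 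Neither the homotopy-pullback property nor the homotopy fibres are affected by these replacements, since homotopy fibres are by definition the fibres of the replacing fibrations.

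Next form the strict pullback $P=B'\times_B \bar E\to B'$. This is a Hurewicz fibration, and its fibre over $b'$ is exactly $\bar p^{-1}(f(b'))=\mathrm{hofib}_{f(b')}(E\to B)$. The square is a homotopy pullback precisely when the induced gap map $\kappa\colon \bar E'\to P$, $\kappa(x)=(\bar p'(x),\bar g(x))$, is a homotopy equivalence. Note that $\kappa$ is a fibrewise map over $B'$ between two weakly fibrant spaces, and its restriction to the fibres over $b'$ is exactly the induced map of homotopy fibres in the statement.

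Now apply Theorem~\ref{dold} over the Dold space $B'$.

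If every fibre map is a homotopy equivalence, then $\kappa$ is a fibrewise homotopy equivalence, hence a homotopy equivalence, so the square is a homotopy pullback.

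Conversely, if the square is a homotopy pullback, then $\kappa$ is an ordinary homotopy equivalence. To conclude that each fibre map is an equivalence, I will use that a map of fibrations over the same base which is a homotopy equivalence of total spaces is a fibrewise homotopy equivalence. I expect this converse direction to be the main obstacle. I will try to obtain it as follows:
\begin{itemize}
\item compare the long exact sequences of the two fibrations and apply the five lemma to get weak equivalences on fibres, then upgrade these using the CW or Dold hypotheses; or
\item more robustly, invoke the version of Dold's theorem stating that a fibrewise map of Dold fibrations over a Dold base that is an ordinary homotopy equivalence is a fibrewise homotopy equivalence, and then restrict to the fibre over each $b'$.
\end{itemize}
The remaining verification is the check that the fibres of $\kappa$ are indeed the stated map of homotopy fibres, which is routine.
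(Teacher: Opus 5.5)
Your argument is essentially the paper's: reduce to a strictly commutative square of fibrations, form the strict pullback $P=B'\times_B\bar E$, and apply Theorem~\ref{dold} over the Dold space $B'$ to the gap map $\kappa$; where the paper invokes Mather's Lemma~6 for the reduction, you strictify explicitly with mapping path spaces. For the direction the paper takes as standard, use your second option---a map over $B'$ between Hurewicz fibrations that is a homotopy equivalence is a fibre homotopy equivalence (a classical result of Dold, valid over any base), hence a homotopy equivalence on each fibre---whereas the five-lemma route only gives weak equivalences, and you cannot upgrade these because the Dold hypothesis concerns $B'$, not the fibres.
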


\begin{proof}

Certainly, if the square is a homotopy pullback, then the induced map $\mathrm{hofib}_{b'}(E' \to B') \longrightarrow \mathrm{hofib}_{f(b')}(E \to B)$ is a homotopy equivalence, for all $b'\in B'.$

For the converse, taking fibration replacements for the vertical maps and using the homotopy lifting property, one can construct a commutative square in which the vertical maps are fibrations, together with a homotopy commutative cube connecting this new square to the original one. In this cube, the vertical maps relating both squares are homotopy equivalences. In other words, the two squares are equivalent in the sense of Mather \cite{mather}. Therefore, by \cite[Lemma~6]{mather}, one may assume, without loss of generality, that we have a strictly commutative square
\[
\xymatrix{
{E'} \ar[r]^u \ar[d]_{p'} & {E} \ar[d]^p \\
{B'} \ar[r]_f & {B},
}
\]
where \(B'\) is a Dold space, \(p\) and \(p'\) are fibrations, and for every \(b' \in B'\), the restriction of the map $u$ to the fibres
\[
u_{b'} \colon (p')^{-1}(b') \to p^{-1}(f(b'))
\]
is a homotopy equivalence.

Let $P = B'\times_B E$ be the pullback of $p$ along $f$, and let $\Phi\colon E'\to P$ be the induced map. Since $p$ is a fibration, this pullback is also a homotopy pullback. Thus, it suffices to show that $\Phi$ is a homotopy equivalence.

We will prove that $\Phi$ is a fibrewise homotopy equivalence over $B'$. Consider the commutative triangle
$$
\xymatrix{
{E'} \ar[rr]^{\Phi } \ar[dr]_{p'} & & {P} \ar[dl]^{\overline{p}} \\
& B', &
}
$$
where $p'$ and $\overline{p}$ are Hurewicz fibrations. By \cite[Theorem 6.3]{dold1963partitions}, it suffices to check that, for every $b'\in B'$, the restriction of the $\Phi $ to the fibres
$$
\Phi_{b'}\colon (p')^{-1}(b')\to (\overline{p})^{-1}(b')
$$
is a homotopy equivalence.

Now, for each $b'\in B'$, there is a canonical homeomorphism
$$
(\overline{p})^{-1}(b') \cong p^{-1}(f(b')),
$$
under which $\Phi_{b'}$ identifies with $u_{b'}$. By assumption, $u_{b'}$ is a homotopy equivalence. Hence $\Phi_{b'}$ is a homotopy equivalence for all $b'$, and therefore $\Phi$ is a fibrewise homotopy equivalence. It follows that $\Phi$ is a homotopy equivalence, and the result follows.
\end{proof}

\begin{remark}
It suffices to assume that, for each path component of $B'$, there exists a point $b'\in B'$ such that the induced map on homotopy fibres
$\mathrm{hofib}_{b'}(E' \to B') \longrightarrow \mathrm{hofib}_{f(b')}(E \to B)$
is a homotopy equivalence. In particular, if $B'$ is path-connected, it is enough to require this condition at a single point $b'\in B'.$
\end{remark}

From now on, we consider the space $G$ to be a path-connected, locally finite CW complex equipped with an $H$-space structure. It is well known that, under these assumptions, \( G \) is an \( H \)-group. 

Denote by
\[
\mu \colon G \times G \to G, \quad \nu \colon G \to G, \quad \text{and} \quad e \in G
\]
the multiplication, the homotopy inverse, and the homotopy identity, respectively. We may also consider the difference map
\[
\delta \colon G \times G \to G, \qquad \delta(x,y) := \mu(x,\nu(y)).
\]
If \( \Delta \colon G \to G \times G \) denotes the diagonal map, then
\[
\delta \circ \Delta \simeq C_e,
\]
where \( C_e \) denotes the constant map at \( e \). Since \( G \) is a CW-complex, it is locally equiconnected, and hence \( \Delta \) is a cofibration. Since $G$ is well-pointed, so is $G\times G$; hence $\Delta $ as a cofibration between well-pointed spaces is a pointed cofibration. Therefore, we can assume that the map $\delta $ is \emph{homotopic} to the map $(x,y)\mapsto \mu (x,\nu (y))$, for all $(x,y)\in G\times G$ and
\[
\delta \circ \Delta = C_e,
\]
that is, \( \delta(x,x) = e \) for all \( x \in G \).

Remember that $\pi _G\colon PG\to G\times G$ denotes the free path fibration, given as $\pi (\alpha )=(\alpha (0),\alpha (1)),$ and $p_0:P_0G\to G$ denotes the based path fibration, given as $p_0(\alpha )=\alpha (0).$ Here, $P_0G$ denotes the space of paths in $G$ such that $\alpha (1)=e.$

\begin{proposition}\label{hp}
If $G$ is a path-connected CW H-space, then there exists a homotopy pullback
$$\xymatrix{
{PG} \ar[d]_{\pi _G} \ \ar[rr]^{\delta '} & & {P_0G} \ar[d]^{p_0} \\
{G\times G} \ar[rr]_{\delta } & & {G} ,}$$ \noindent where $\delta '$ is given by $\delta '(\alpha )(t):=\delta (\alpha (t),\alpha (1))$, for all $t\in I$.
\end{proposition}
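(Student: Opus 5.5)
The plan is to use Lemma~\ref{clave}. The base $G\times G$ is a CW complex, hence a Dold space, and it is path-connected. So by the remark following Lemma~\ref{clave}, it is enough to do two things. First, check that the square commutes, strictly or up to homotopy. Second, check that at a single base point the induced map on homotopy fibres is a homotopy equivalence. Both vertical maps are Hurewicz fibrations, so their homotopy fibres may be replaced by actual fibres.

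\textbf{Step 1: commutativity and continuity.} The map $\delta'$ is well defined and lands in $P_0G$, because $\delta'(\alpha)(1)=\delta(\alpha(1),\alpha(1))=e$, using the normalization $\delta\circ\Delta=C_e$ fixed above. Continuity follows from continuity of $\delta$ and of evaluation, using the exponential law for the compact--open topology on paths in the compact interval $I$. The square commutes strictly:
\[
p_0\delta'(\alpha)=\delta(\alpha(0),\alpha(1))=\delta\pi_G(\alpha).
\]

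\textbf{Step 2: the fibre comparison.} Take the base point $(e,e)\in G\times G$, which maps to $\delta(e,e)=e$.
\begin{itemize}
\item The fibre of $\pi_G$ over $(e,e)$ is the loop space $\Omega_eG$.
\item The fibre of $p_0$ over $e$ is again $\Omega_eG$, consisting of paths with both endpoints $e$.
\item The restricted map is $\omega\mapsto\bigl(t\mapsto\delta(\omega(t),e)\bigr)$, that is, $\Omega(\delta(-,e))$.
\end{itemize}
Now $\delta(x,e)=\mu(x,\nu(e))$ up to homotopy. Since $\nu(e)$ lies in the path component of $e$, this map is homotopic to $\mu(-,e)$, which is homotopic to the identity. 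So $\phi:=\delta(-,e)$ is a self-map of $G$ homotopic to $\mathrm{id}_G$, and it fixes $e$, because $\phi(e)=\delta(e,e)=e$.

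\textbf{Main obstacle.} The delicate point is passing from a free homotopy $\phi\simeq\mathrm{id}$ to a homotopy equivalence $\Omega\phi\colon\Omega_eG\to\Omega_eG$, since the homotopy need not fix $e$. My first attempt is to use that $G$ is well-pointed, being a CW complex, so a free homotopy between pointed maps can be traded for a pointed homotopy, possibly after conjugating by the loop traced out by the base point. On an $H$-group, conjugation by a loop acts trivially up to homotopy, because $\pi_1$ acts trivially on an $H$-space. Alternatively one can avoid this issue entirely. The induced map on loop spaces is a map of $H$-spaces between spaces of CW homotopy type (Milnor). It induces on $\pi_n(\Omega G)=\pi_{n+1}(G)$ the map $\phi_*$, and $\phi_*$ is an isomorphism because $\phi$ is freely homotopic to the identity; this holds for every path component. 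By Whitehead's theorem, $\Omega\phi$ is then a homotopy equivalence.

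With the fibre map a homotopy equivalence at one point of the path-connected Dold base, Lemma~\ref{clave} and the subsequent remark show that the square is a homotopy pullback.
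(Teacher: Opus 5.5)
Your proof is correct and shares the paper's overall structure. Both arguments get well-definedness and strict commutativity from the normalization $\delta\circ\Delta=C_e$. Both then apply Lemma~\ref{clave} and its remark over the path-connected Dold base $G\times G$, which reduces everything to the fibre map $\Omega(\delta(-,e))$ over $(e,e)$.

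The real divergence is in the last step. The paper uses the pointed normalization arranged just before the proposition: $\delta$ is based-homotopic to $(x,y)\mapsto\mu(x,\nu(y))$ with $\nu(e)=e$. This gives a \emph{based} homotopy $h=\delta(-,e)\simeq\mathrm{id}_G$. The paper then loops it, $K(\alpha,s)(t)=H(\alpha(t),s)$, and obtains an explicit homotopy $\delta'_{(e,e)}\simeq\mathrm{id}_{\Omega G}$.

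You use only a free homotopy $\phi\simeq\mathrm{id}_G$ and compensate in one of two ways. The first is the triviality of the $\pi_1(G)$-action on pointed homotopy classes into an $H$-space; this upgrades your free homotopy to a pointed one and puts you back in the paper's situation. The second checks that $\Omega\phi$ is a weak equivalence and then applies Milnor's theorem and Whitehead's theorem. On $\pi_*$, $\Omega\phi$ is $\phi_*$, which differs from the identity by a change-of-basepoint automorphism. Strict compatibility of $\Omega\phi$ with concatenation handles the non-identity components.

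Your version does not depend on how carefully the pointed structure of $\delta$ was set up, a point the paper treats rather quickly. The cost is heavier machinery, and the result is only a homotopy equivalence rather than a homotopy to the identity.

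One minor remark: your claim that $G\times G$ is a CW complex uses local finiteness of $G$, which is the standing assumption of that section. For a general CW $H$-space, what you need is the paper's appeal to closure of Dold spaces under products.
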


\begin{proof}
As $\delta \circ \Delta =C_e$, the map $\delta '$ is well-defined. Moreover, we have a commutative square:
$$p_0(\delta '(\alpha ))=\delta '(\alpha )(0)=\delta (\alpha (0),\alpha (1))=\delta (\pi _G(\alpha )).$$
Since $G$ is a Dold space, the product $G\times G$ is also a Dold space (see \cite[Examples 2.5 (6)]{schwam-vogt}). Moreover, both $\pi _G$ and $p_0$ are Hurewicz fibrations. Therefore, as $G\times G$ is also path-connected, Lemma \ref{clave} above and its remark imply that it suffices to check that
$$\delta '_{(e,e)}\colon \pi _G^{-1}((e,e))\to p_0^{-1}(e)$$ \noindent is an ordinary homotopy equivalence.
Indeed, first observe that
$\pi _G^{-1}((e,e))=\Omega G=p_0^{-1}(e).$
Moreover, under this description, $\delta '_{(e,e)}\colon \Omega G\to \Omega G$ sends any loop $\alpha $ to the loop $\widetilde{\alpha }$ defined as
$$\widetilde{\alpha }(t):=\delta (\alpha (t),e)$$
Thus, $\delta'_{(e,e)}$ is induced by the based map
$h\colon G\to G,$ defined by $h(x)=\delta(x,e).$

Let us check that $\delta '_{(e,e)}\colon \Omega G\to \Omega G$ is homotopic to the identity map.
Since $\delta$ is homotopic to $(x,y)\mapsto \mu(x,\nu(y))$ through a based homotopy and $\nu(e)=e$, it follows that $h$ is based homotopic to the map $x\mapsto \mu(x,e)$. On the other hand, by the $H$-space structure of $G$, the latter map is based homotopic to $\mathrm{id}_G$. Therefore, $h$ is based homotopic to $\mathrm{id}_G$.
Let
$$
H\colon G\times I\to G
$$
be a based homotopy such that
$$
H(x,0)=\delta(x,e),\qquad H(x,1)=x,\qquad H(e,t)=e
$$
for all $x\in G$ and $t\in I$.
Now, define the following homotopy
$$K:\Omega G\times I\to \Omega G,\hspace{10pt}K(\alpha ,s)(t):=H(\alpha (t),s)$$
One can easily check that $K$ is a well-defined map, which is continuous, and gives a homotopy $K:\delta '_{(e,e)}\simeq \mathrm{id}_{\Omega G}.$ In particular $\delta '_{(e,e)}$ is a homotopy equivalence. This proves the statement of the proposition.
\end{proof}

\begin{corollary}\label{dTC=dcat}
For a path connected $CW$ $H$-space, we have $\mathrm{dTC}(G)=\mathrm{dcat}(G)$.    
\end{corollary}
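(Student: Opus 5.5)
The plan is to read the equality off the homotopy pullback of Proposition~\ref{hp}, combined with homotopy invariance of $\dsecat$ and its behaviour under homotopy pullbacks. By definition $\mathrm{dTC}(G)=\dsecat(\pi_G)$ and $\mathrm{dcat}(G)=\dsecat(p_0)$. So it suffices to show $\dsecat(\pi_G)=\dsecat(p_0)$, and we do this by proving the two inequalities separately.

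For $\dsecat(\pi_G)\le \dsecat(p_0)$, we use Proposition~\ref{hp}. It exhibits $\pi_G$ as a homotopy pullback of $p_0$ along $\delta\colon G\times G\to G$. By Remark~\ref{dseact hp inequality}, which extends Proposition~\ref{segundo} to homotopy pullbacks, this gives $\mathrm{dTC}(G)=\dsecat(\pi_G)\le \dsecat(p_0)=\mathrm{dcat}(G)$. More concretely: $p_0$ is a fibration, so the strict pullback $P$ of $p_0$ along $\delta$ is a homotopy pullback. The induced map $PG\to P$ is a fibrewise homotopy equivalence over $G\times G$, which is exactly what the proof of Lemma~\ref{clave} produces. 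Proposition~\ref{primero} then identifies $\dsecat(\pi_G)$ with $\dsecat$ of the strict pullback, and Proposition~\ref{segundo} bounds the latter by $\dsecat(p_0)$.

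For $\dsecat(p_0)\le\dsecat(\pi_G)$, we pull back along the inclusion $i\colon G\to G\times G$, $x\mapsto (x,e)$. The pullback of $\pi_G$ along $i$ is the space of paths ending at $e$ with the evaluation at $0$, which is precisely $p_0\colon P_0G\to G$. Proposition~\ref{segundo} then gives $\mathrm{dcat}(G)\le \mathrm{dTC}(G)$. Alternatively, one could quote Corollary~\ref{cor-dsecat-dcat} after noting that $\pi_G$ has a homotopy fibre with the right connectivity, but the direct pullback is cleaner.

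The only delicate point is the first inequality. We must make sure the homotopy pullback statement actually feeds into the pullback monotonicity. That requires $G\times G$ to be a Dold space and path-connected, which was already used in Proposition~\ref{hp}, and it also requires the metric hypotheses needed for $\dsecat$ to be defined on the strict pullback. Since $G$ is a locally finite CW complex, and hence metrizable, these conditions are satisfied and the invariance results apply.
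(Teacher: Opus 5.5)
Your argument is correct and follows the paper's proof. The inequality $\dTC(G)\le\dcat(G)$ is obtained exactly as in the paper, from the homotopy pullback of Proposition~\ref{hp} combined with Proposition~\ref{segundo} and Remark~\ref{dseact hp inequality}. For $\dcat(G)\le\dTC(G)$ the paper just cites \cite[Proposition 3.11]{D-J}, and your pullback of $\pi_G$ along $x\mapsto(x,e)$ proves this directly.

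Drop the aside about Corollary~\ref{cor-dsecat-dcat}, though. That result only bounds $\dsecat$ from above by $\dcat$ of the base, so for $\pi_G$ it yields $\dTC(G)\le\dcat(G\times G)$ and cannot give $\dcat(G)\le\dTC(G)$.
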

\begin{proof}
Note that we have $\mathrm{dTC}(G)\geq \mathrm{dcat}(G)$ from \cite[Proposition 3.11]{D-J}.
Since the diagram in Proposition \ref{hp} is a homotopy pullback, using Proposition \ref{segundo} and Remark \ref{dseact hp inequality} we have 
$\mathrm{dTC}(G)\leq \mathrm{dcat}(G)$. 
\end{proof}

\begin{remark}
If $G$ is a topological group, the square in the statement of Lemma~\ref{hp} is in fact a strict pullback.   
\end{remark}

For the next result, we will need a useful lemma concerning the distributional sectional category and the distributional category of a map. One can easily check that, given a map $f\colon X\to Y,$ with $Y$ path-connected, $\mathrm{dcat}(f)$ is the least nonnegative integer $n\geq 0$ (or infinity if such an integer does not exist) for which there is a map $\sigma \colon X\to E_{n+1}(P_0Y)$ satisfying $B_{n+1}(p^Y)\circ \sigma =f:$
$$\xymatrix{
 & {E_{n+1}(P_0Y)} \ar[d]^{\mathcal{B}_{n+1}(p^Y)} \\
 {X} \ar@{.>}[ur]^{\sigma } \ar[r]_f & {Y} }$$
Here $P_0Y$ denotes the subspace of $PY$ consisting of paths ending at a fixed point $y_0\in Y$ and $p^Y\colon P_0Y\to Y$ is the path-fibration, given by the evaluation at $0.$ One can also check that $\mathrm{dcat}(Y)=\mathrm{dsecat}(p^Y).$

\begin{lemma}\label{dsecat-dcat}
Consider a pullback of a fibration $p\colon E\to B$ along a map $f\colon B'\to B$ with $B$ path-connected:
$$\xymatrix{
{E'} \ar[r]^{\overline{f}} \ar[d]_{\overline{p}} & {E} \ar[d]^p \\
{B'} \ar[r]_{f} & {B}. }$$
Then, $$\mathrm{dsecat}(\overline{p})\leq \mathrm{dcat}(f).$$ If, in addition, $E$ is contractible, then $$\mathrm{dsecat}(\overline{p})=\mathrm{dcat}(f).$$ 
\end{lemma}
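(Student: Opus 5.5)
The plan is to compare $\overline{p}$ with the pullback of the based path fibration $p^Y$ along $f$. That pullback has the same distributional sectional category as $\dcat(f)$, and it maps fibrewise into $E'$.

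\textbf{Step 1: identify $\dcat(f)$ as a $\dsecat$.} Fix a base point $b_0\in B$ and a point $e_0\in p^{-1}(b_0)$, and use $P_0B$ with paths ending at $b_0$. Let $\overline{p^B}\colon f^*P_0B\to B'$ be the pullback of $p^B$ along $f$. By Proposition~\ref{pullbacks}, $E_{n+1}(f^*P_0B)$ is the pullback of $\mathcal{B}_{n+1}(p^B)$ along $f$. Hence sections of $\mathcal{B}_{n+1}(\overline{p^B})$ correspond bijectively to maps $\sigma\colon B'\to E_{n+1}(P_0B)$ with $\mathcal{B}_{n+1}(p^B)\circ\sigma=f$. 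By the description of $\dcat(f)$ recalled just before the lemma, this gives $\dcat(f)=\dsecat(\overline{p^B})$.

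\textbf{Step 2: inequality.} As in the proof of Corollary~\ref{cor-dsecat-dcat}, path-connectedness of $B$ and the lifting function of $p$ give a continuous map $\omega\colon P_0B\to E$ over $B$. Concretely, $\omega(\beta)=\Gamma(e_0,\overline{\beta})(1)$, where $\overline{\beta}$ is the reversed path; lifting the reversed path from $e_0$ lands over $\beta(0)$, so $p\circ\omega=p^B$. Pulling back along $f$, the universal property of $E'=B'\times_B E$ gives $\overline{\omega}\colon f^*P_0B\to E'$ with $\overline{p}\circ\overline{\omega}=\overline{p^B}$. Proposition~\ref{prop: dsecat monotonicity} then yields $\dsecat(\overline{p})\le\dsecat(\overline{p^B})=\dcat(f)$.

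\textbf{Step 3: equality when $E$ is contractible.} Here $\omega$ is a fibrewise map over $B$ between two fibrations with contractible total spaces. Restricted to fibres, it is a map $\Omega B\to F$, and the long exact sequences show it is a weak equivalence. The cleanest route is to note that both squares are homotopy pullbacks of contractible-total-space fibrations along $f$, so $f^*P_0B$ and $E'$ are both models of the homotopy fibre of $f$ over $B$. Then $\overline{\omega}$ is a homotopy equivalence compatible with the identity of $B'$, and Proposition~\ref{primero} gives equality.

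\textbf{Main obstacle.} The delicate point is upgrading $\omega$ to a genuine homotopy equivalence over $B'$, not merely a weak one. I would handle it either by invoking Dold's Theorem~\ref{dold} (assuming $B'$ is a Dold space), or by arguing that $\omega$ is a homotopy equivalence between contractible spaces. In the latter case $\omega$ is a fibre homotopy equivalence over $B$, by Dold's theorem applied over $B$, and fibre homotopy equivalences pull back to fibre homotopy equivalences over $B'$. This second route needs $B$ to be a Dold space, and I would add or cite this as a mild standing hypothesis. If it is unavailable, I would fall back on the reverse inequality directly: map $E\to P_0B$ over $B$ using a contraction of $E$ to $e_0$ composed with $p$. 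This produces $E\to P_0B$ over $B$ up to fibrewise homotopy, and Proposition~\ref{prop: dsecat monotonicity}, which allows homotopy-commutative triangles, then gives $\dcat(f)\le\dsecat(\overline{p})$ after pulling back.
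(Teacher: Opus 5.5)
Your proposal is correct. The inequality half is the paper's argument in slightly repackaged form. The paper pushes a witness $\sigma\colon B'\to E_{n+1}(P_0B)$ for $\dcat(f)$ forward along $\omega_*^{n+1}$. It then uses that $E_{n+1}(E')$ is the pullback of $\mathcal{B}_{n+1}(p)$ along $f$ (Proposition~\ref{pullbacks}), which is exactly what your Step~1 together with monotonicity applied to $\overline{\omega}$ amounts to.

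For the equality, the argument you relegate to a fallback is precisely the paper's proof, and it should be your main one. If $C$ contracts $E$ onto $e_0\in p^{-1}(b_0)$, then $K(e)(t)=p(C(e,t))$ defines $K\colon E\to P_0B$ with $p^B\circ K=p$ \emph{strictly}, not merely up to fibrewise homotopy. So $K$ passes directly through the strict pullbacks along $f$. The paper writes the resulting witness as $K_*^{n+1}\circ\overline{f}_*^{n+1}\circ\sigma$, which gives $\dcat(f)\le\dsecat(\overline{p})$ with no further hypotheses.

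Your primary route, via a homotopy equivalence $\overline{\omega}$ and Proposition~\ref{primero}, is heavier but also valid unconditionally. The Dold-space assumption you propose to add is unnecessary. Indeed, $\omega$ is a map over $B$ between Hurewicz fibrations, and it is a homotopy equivalence because both total spaces are contractible. A classical theorem of Dold says that any such map is a fibre homotopy equivalence, over an arbitrary base. Theorem~\ref{dold} is a different result, the fibrewise criterion, and that is where the Dold-space hypothesis enters. Fibre homotopy equivalences pull back along $f$, so $\overline{\omega}$ is a homotopy equivalence over $\mathrm{id}_{B'}$. Your discussion of weak equivalences on fibres can then be dropped entirely.
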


\begin{proof}
First, suppose that $\mathrm{dcat}(f)=n$ and consider a map $\sigma \colon B'\to E_{n+1}(P_0B)$ such that $\mathcal{B}_{n+1}(p^B)\circ \sigma  =f.$
Since $B$ is path-connected, we can find a commutative diagram
$$\xymatrix{
{P_0B} \ar[rr]^{\omega } \ar[dr]_{p^B} & & {E} \ar[dl]^p \\
& {B}. &  }$$
\noindent In turn, this diagram induces a commutative triangle
$$\xymatrix{
{E_{n+1}(P_0B)} \ar[rr]^{\omega _*^{n+1}} \ar[dr]_{\mathcal{B}_{n+1}(p^B)} & & {E_{n+1}(E)} \ar[dl]^{\mathcal{B}_{n+1}(p)} \\
& {B}. &  }$$
By the universal property of the pullback, there exists $\widetilde{\sigma }$ an induced map 
\[
\xymatrix{
{B'} \ar@/^1pc/[drr]^{\omega _*^{n+1}\circ \sigma }
  \ar@/_1pc/[ddr]_{\mathrm{id}_{B'}}
  \ar@{.>}[dr]^{\widetilde{\sigma}}
&&
\\
& E_{n+1}(E')
    \ar[r]
    \ar[d]^{\mathcal{B}_{n+1}(\overline{p})}
& E_{n+1}(E) \ar[d]^{\mathcal{B}_{n+1}(p)}
\\
& {B'} \ar[r]_{f} & {B}.
}
\]
\noindent proving that $\mathrm{dsecat}(\overline{p})\leq n.$

Now, suppose that $E$ is contractible. Then we have that, in particular, $p\colon E\to B$ is nullhomotopic and we can find a map $K\colon E\to P_0B$ such that $p^B\circ K=p.$ Assume that $\mathrm{dsecat}(\overline{p})=n$ and take $\sigma\colon B'\to \mathcal{B}_{n+1}(E')$ a section of $\mathcal{B}_{n+1}(\overline{p}).$ Now consider the following commutative diagram
$$\xymatrix{
{B'} \ar[drr]_{\mathrm{id}_{B'}} \ar[rr]^{\sigma } & & {E_{n+1}(E')} \ar[d]^{\mathcal{B}_{n+1}(\overline{p})} \ar[rr]^{\overline{f}_*^{n+1}} & & {E_{n+1}(E)} \ar[rr]^{K_*^{n+1}} \ar[d]_{\mathcal{B}_{n+1}(p)} & & {E_{n+1}(P_0B)} \ar[dll]^{\mathcal{B}_{n+1}(p^B)} \\
 & & {B'} \ar[rr]_f & & B & &
}$$
Taking the composite $\widetilde{\sigma}:=K_*^{n+1}\circ \overline{f}_*^{n+1}\circ \sigma :B'\to E_{n+1}(P_0B)$ we obtain that $\mathcal{B}_{n+1}(p^B)\circ \widetilde{\sigma }=f$, proving that $\dcat(f)\leq n.$
\end{proof}

\begin{remark}
It is straightforward to verify that Lemma~\ref{dsecat-dcat} remains valid when the pullback is replaced by a homotopy pullback. 
\end{remark}

Now we are in a position to give our result.
\begin{proposition}\label{prop: dD for H-groups}
Let $f,g\colon X\to G$ be maps between metric spaces where $G$ is an $H$-group with the hypothesis described above. Consider the map
\[
\delta_{f,g}\colon X\longrightarrow G,\qquad 
\delta_{f,g}(x):=\delta (f(x),g(x)).
\]
Under these conditions,
$\mathrm{dD}(f,g)=\mathrm{dcat}(\delta_{f,g}).$
In particular,
$$\mathrm{dD}(f,g)\leq \mathrm{dcat}(G).$$
\end{proposition}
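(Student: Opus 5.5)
The plan is to realise $\dD(f,g)$ as the distributional sectional category of a pullback and then apply Lemma~\ref{dsecat-dcat}.

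First, by Proposition~\ref{dD-dsecat}, $\dD(f,g)=\dsecat(p_{f,g})$, where $p_{f,g}\colon \mathcal{P}(f,g)\to X$ is the pullback of the free path fibration $\pi_G$ along $(f,g)\colon X\to G\times G$.

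Next, I paste this pullback square with the homotopy pullback of Proposition~\ref{hp}:
\[
\xymatrix{
\mathcal{P}(f,g) \ar[r] \ar[d]_{p_{f,g}} & PG \ar[r]^{\delta'} \ar[d]^{\pi_G} & P_0G \ar[d]^{p_0}\\
X \ar[r]_{(f,g)} & G\times G \ar[r]_{\delta} & G.
}
\]
The left square is a strict pullback of a fibration, hence a homotopy pullback. The right square is a homotopy pullback by Proposition~\ref{hp}. By the standard pasting property of homotopy pullbacks, the outer rectangle is a homotopy pullback of $p_0$ along $\delta\circ(f,g)=\delta_{f,g}$. Here $P_0G$ is contractible and $G$ is path-connected.

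I then apply Lemma~\ref{dsecat-dcat} in its homotopy-pullback version (the remark following it), together with homotopy invariance (Proposition~\ref{primero}). These give $\dsecat(p_{f,g})=\dcat(\delta_{f,g})$. If one prefers to avoid the homotopy-pullback remark, an alternative is to replace $p_{f,g}$ by the strict pullback $\overline{p}$ of $p_0$ along $\delta_{f,g}$. The comparison map $\mathcal{P}(f,g)\to X\times_G P_0G$ is then a map over $X$ inducing homotopy equivalences on fibres, namely $\delta'$ restricted to $\pi_G^{-1}(f(x),g(x))\to p_0^{-1}(\delta_{f,g}(x))$, which is an equivalence because the right square is a homotopy pullback. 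This route requires $X$ to be Dold, or an argument via Lemma~\ref{clave}, to upgrade the comparison to a fibrewise equivalence. I would rather rely on the remark together with Remark~\ref{dseact hp inequality}, which applies the inequalities in both directions.

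Finally, the inequality $\dD(f,g)\le\dcat(G)$ follows from $\dcat(\delta_{f,g})\le \dcat(G)$. A distributed section $\sigma\colon G\to E_{n+1}(P_0G)$ of $\mathcal{B}_{n+1}(p_0)$ composes with $\delta_{f,g}$ to give $\sigma\circ\delta_{f,g}$, a lift of $\delta_{f,g}$. Equivalently, this is Proposition~\ref{segundo} applied to the pullback along $\delta_{f,g}$.

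The main obstacle is the bookkeeping in the middle step: checking that the pasted rectangle is genuinely a homotopy pullback with bottom map $\delta_{f,g}$, and that the homotopy-pullback version of Lemma~\ref{dsecat-dcat} is legitimately available, without hidden Dold-space hypotheses on $X$.
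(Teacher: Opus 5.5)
Your proposal is correct and matches the paper's proof: you paste the pullback defining $p_{f,g}$ with the homotopy pullback of Proposition~\ref{hp}, then apply the homotopy-pullback version of Lemma~\ref{dsecat-dcat} using contractibility of $P_0G$, and you additionally spell out $\dcat(\delta_{f,g})\le\dcat(G)$ by precomposing a distributed section with $\delta_{f,g}$, which the paper leaves implicit. Your worry about a hidden Dold hypothesis on $X$ is unfounded: the Dold condition is used only for $G\times G$ inside Proposition~\ref{hp}, pasting of homotopy pullbacks is formal, and the homotopy-pullback version of the lemma reduces to the strict one through homotopy invariance of $\dsecat$ over $\mathrm{id}_X$.
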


\begin{proof}
We take the following two consecutive homotopy pullback squares
$$\xymatrix{
{\mathcal{P}(f,g)} \ar[r] \ar[d]_{p_{f,g}} & {G^I} \ar[d]_{\pi _G} \ar[r] & {P_0G} \ar[d]^{p^G} \\
{X} \ar[r]_{(f,g)} & {G\times G} \ar[r]_{\delta } & {G}.
}$$
Taking into account that the composition of homotopy pullbacks is a homotopy pullback and Lemma \ref{dsecat-dcat} we obtain the result.
\end{proof}

\begin{remark}
Proposition~\ref{prop: dD for H-groups} is the case $m=2$ of \cite[Theorem~7.2]{O-J}. The result in \cite{O-J} is, in fact, strictly more general: it is established for arbitrary $m$, applies to group-like spaces without assuming CW type, and also includes non-homotopy-associative CW $H$-spaces. Here, we reproduce the $m=2$ case via an alternative approach.
\end{remark}

In the next result we will use the multiplication $f\cdot h\colon X\to G$ of maps $f,h\colon X\to G$ into an $H$-group, defined by the composition
$$X\stackrel{\Delta _X}{\longrightarrow }X\times X\stackrel{f\times h}{\longrightarrow }G\times G\stackrel{\mu }{\longrightarrow }G.$$

\begin{proposition}
Given $f,g,h\colon X\to G$ maps we have
$$\mathrm{dD}(f\cdot h,g\cdot h)\leq \mathrm{dD}(f,g).$$
\end{proposition}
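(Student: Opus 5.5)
The most direct route is to work with distributed homotopies as in the definition of $\dD$, rather than with the pullback description. Suppose $\dD(f,g)=n$. Then there is a continuous map $s\colon X\to\mathcal{B}_{n+1}(PG)$ with $s(x)\in\mathcal{B}_{n+1}(P(f(x),g(x)))$ for every $x$. The plan is to transport each path in the support of $s(x)$ by right multiplication with the constant value $h(x)$. This produces a distributed homotopy between $f\cdot h$ and $g\cdot h$ with at most $n+1$ support points, which gives $\dD(f\cdot h,g\cdot h)\le n$.

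Concretely, I define
\[
\Phi\colon X\times PG\longrightarrow PG,\qquad \Phi(x,\gamma)(t)=\mu(\gamma(t),h(x)).
\]
Then $\Phi(x,\gamma)$ runs from $(f\cdot h)(x)$ to $(g\cdot h)(x)$ whenever $\gamma\in P(f(x),g(x))$. I then set
\[
s'(x)=\sum_i \lambda_i\,\delta_{\Phi(x,\gamma_i)}\quad\text{for}\quad s(x)=\sum_i\lambda_i\delta_{\gamma_i}.
\]
In other words, $s'(x)=(\Phi_x)_*s(x)$, the push-forward of $s(x)$ along $\Phi_x=\Phi(x,-)$. Push-forward never increases the cardinality of the support, so $s'(x)\in\mathcal{B}_{n+1}(P((f\cdot h)(x),(g\cdot h)(x)))$. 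The endpoint conditions are immediate from the strict formula $(f\cdot h)(x)=\mu(f(x),h(x))$. No homotopy-associativity or inverse is needed here; only the multiplication $\mu$ is used.

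The main obstacle is the continuity of $s'$. The map $\Phi$ is continuous, since $PG$ carries the compact--open topology, $G$ is metrizable and $I$ is compact. I first try to write $s'$ as a composite. One route is
\[
X\xrightarrow{(\mathrm{id},s)}X\times\mathcal{B}_{n+1}(PG)\longrightarrow\mathcal{B}_{n+1}(X\times PG)\xrightarrow{\Phi_*}\mathcal{B}_{n+1}(PG),
\]
where the middle map is $(x,\nu)\mapsto\delta_x\otimes\nu$. That middle map is continuous by Lemma~\ref{lem:tensor-continuity} with $m=1$, given separability. Alternatively, I argue sequentially with the weak topology: if $x_k\to x$ and $s(x_k)\to s(x)$, then $\Phi(x_k,-)_*s(x_k)\to\Phi(x,-)_*s(x)$. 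This follows from the same $\varepsilon$-ball argument used in Lemma~\ref{lem:tensor-continuity}, together with joint continuity of $\Phi$ at the finitely many support points. The push-forward $\Phi_*$ along a fixed continuous map preserves weak convergence, as already used in the proof of Theorem~\ref{prop: composition inequality}. If separability is not available, I instead prove the Lévy--Prokhorov estimate directly, using the uniform structure of the finitely many atoms exactly as in the continuity lemma for $r_{i,N}$.

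Taking the least such $n$ then gives $\dD(f\cdot h,g\cdot h)\le\dD(f,g)$. A pullback-level version of the argument should also be possible: the map $(x,\gamma)\mapsto(x,\Phi(x,\gamma))$ defines a morphism $\mathcal{P}(f,g)\to\mathcal{P}(f\cdot h,g\cdot h)$ over $X$. Proposition~\ref{prop: dsecat monotonicity} together with Proposition~\ref{dD-dsecat} would then yield the inequality without handling measures explicitly, and I would present this version as the cleaner alternative.
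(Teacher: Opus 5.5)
Your proof is correct, but it takes a different route from the paper. The paper's proof is a two-line formal reduction. It writes $f\cdot h=\mu\circ(f\times h)\circ\Delta_X$ and quotes two general properties of $\dD$ from Jauhari--Oprea:
\begin{itemize}
\item pre- and post-composition do not increase $\dD$ (their Proposition~5.1);
\item $\dD(f\times h,g\times h)\le \dD(f,g)$ (their Proposition~5.5).
\end{itemize}
Together these give $\dD(f\cdot h,g\cdot h)\le \dD(f\times h,g\times h)\le \dD(f,g)$.

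You instead build the distributed homotopy by hand, pushing each path forward along $\gamma\mapsto \mu(\gamma(-),h(x))$. Unwinding the paper's chain at the level of distributed homotopies gives exactly your $s'$. The steps are: tensor $s(x_1)$ with the Dirac mass at the constant path $c_{h(x_2)}$, apply $\mu$ pathwise, and restrict to the diagonal. So your construction is the concrete content of the two cited properties.

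The paper's version buys brevity and presents the statement as an instance of general structural properties of $\dD$. Yours is self-contained and makes visible that only continuity of $\mu$ is used. The H-group axioms play no role in either argument.

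Your pullback-level variant is the cleanest of the three. Because $\mathcal{P}(f,g)$ already records the point $x$, the map $(x,\gamma)\mapsto (x,\Phi(x,\gamma))$ is a single fixed map $\mathcal{P}(f,g)\to\mathcal{P}(f\cdot h,g\cdot h)$ over $X$. Proposition~\ref{prop: dsecat monotonicity} and Proposition~\ref{dD-dsecat} then finish the proof using only results internal to the paper.

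In that form, continuity reduces to functoriality of $\mathcal{B}_k$ along a fixed map. The $x$-dependent push-forward and the tensor-product lemma are no longer needed, and with them goes any separability assumption on $X$. Your direct sequential argument also avoids separability of $X$, since it only needs $PG$ separable, which holds here. Your first composite, via $\delta_x\otimes\nu$, does need $X$ separable, as you correctly flagged.
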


\begin{proof} 
Note that the inequality
$\mathrm{dD}(f \times h, g \times h) \leq \mathrm{dD}(f, g)$ follows from \cite[Proposition 5.5]{O-J}.
Then applying \cite[Proposition 5.1]{O-J} and the above inequality, we obtain
$$
\begin{aligned}
\mathrm{dD}(f \cdot h, g \cdot h) 
&= \mathrm{dD}\big(\mu \circ (f \times h) \circ \Delta,\, \mu \circ (g \times h) \circ \Delta\big) \\
&\leq \mathrm{dD}(f \times h, g \times h) \\
&\leq \mathrm{dD}(f, g),
\end{aligned}
$$
as desired.
\end{proof}

\bigskip

\noindent \textbf{Acknowledgments.}  
The authors gratefully acknowledge the support of the DST–INSPIRE Faculty Fellowship (Faculty Registration No.~IFA24-MA218), Department of Science and Technology, Government of India; the Industrial Consultancy and Sponsored Research (IC\&SR), Indian Institute of Technology Madras, through the New Faculty Initiation Grant (RF25261395MANFIG009294); and the Spanish Government under Grant PID2023-149804NB-I00.

\bibliographystyle{plain} 
\bibliography{references}

\end{document}